\documentclass[12pt]{amsart}

\usepackage{amsmath, amsthm, amssymb}
\input xy
\xyoption{all}
\usepackage{mathrsfs}
\usepackage{enumerate}

\usepackage{tikz}
\usepackage[utf8]{inputenc}

%Theorem styles
\newtheorem{dummy}{}[section]
\newtheorem{theorem}[dummy]{Theorem}
\newtheorem{proposition}[dummy]{Proposition}
\newtheorem{lemma}[dummy]{Lemma}
\newtheorem{corollary}[dummy]{Corollary}
\newtheorem{conjecture}[dummy]{Conjecture}
\theoremstyle{definition}
\newtheorem{definition}[dummy]{Definition}

\newtheorem{remark}[dummy]{Remark}
\newtheorem{example}[dummy]{Example}

\newcommand{\mV}{\overline V}
\newcommand{\mS}{\overline S}
\newcommand{\QQ}{\mathcal Q}
\newcommand{\Cont}{\mathrm Cont}
\newcommand{\PP}{\mathcal P}
\newcommand{\X}{\mathcal X}
\newcommand{\I}{\mathcal I}
\newcommand{\Y}{\mathcal Y}
\newcommand{\Z}{\mathcal Z}

\newcommand{\bP}{\mathbb{P}}

\newcommand{\M}{\overline{\mathcal{M}}}
\newcommand{\cO}{\mathcal{O}}
\newcommand{\bQ}{\mathbb{Q}}
\newcommand{\bC}{\mathbb{C}}

\newcommand{\vir}{\mathrm{vir}}

\newcommand{\Aut}{\mathrm{Aut}}

\newcommand{\ev}{\mathrm{ev}}
\newcommand{\bt}{\mathbf{t}}

\renewcommand{\ev}{\mathrm{ev}}
\newcommand{\q}{\mathfrak{q}}
\renewcommand{\t}{{t}}
\newcommand{\Res}{\mathrm{Res}}

\newcommand{\Tr}{\mathrm{Tr}}
\newcommand{\diag}{\mathrm{diag}}

\newcommand{\red}{\mathrm{red}}
\newcommand{\oQ}{\overline{Q}}

\usepackage{geometry}
\geometry{margin=1in}

\begin{document}

\title[Genus two Gromov--Witten invariants of quintic threefolds]{A Mirror Theorem for Genus Two Gromov--Witten Invariants of Quintic threefolds}

\author[S.~Guo]{Shuai Guo}
\address{School of Mathematical Sciences and Beijing International Center for Mathematical Research, Peking University, Beijing, China }
\email{gs0202@gmail.com}

\author[F.~Janda]{Felix Janda}
\address{Department of Mathematics, University of Michigan, 2074 East Hall, 530 Church Street, Ann Arbor, MI 48109, USA}
\email{janda@umich.edu}

\author[Y.~Ruan]{Yongbin Ruan}
\address{Department of Mathematics, University of Michigan, 2074 East Hall, 530 Church Street, Ann Arbor, MI 48109, USA}
\email{ruan@umich.edu}
\date{September 2017.}

\begin{abstract}
  We derive a closed formula for the generating function of genus two
  Gromov--Witten invariants of quintic 3-folds and verify the
  corresponding mirror symmetry conjecture of Bershadsky, Cecotti,
  Ooguri and Vafa.
\end{abstract}

\maketitle
\tableofcontents

\section{Introduction}

The computation of the Gromov--Witten (GW) theory of compact
Calabi--Yau 3-folds is a central problem in geometry and physics where
mirror symmetry plays a key role.
In the early 90's, the physicists Candelas and his collaborators
\cite{CdOGP} surprised the mathematical community to use the mirror symmetry to derive a conjectural
formula of a certain generating function (the $J$-function, see Section~\ref{sec:twist} for its definition) of genus zero
Gromov--Witten invariants of a quintic 3-fold in terms of  the period
integral or the $I$-function of its B-model mirror.
The effort to prove the formula directly leads to the birth of
mirror symmetry as a mathematical subject.
Its eventual resolution by Givental \cite{Gi96b} and Liu--Lian--Yau
\cite{LLY97} was considered to be a major event in mathematics during
the 90's.
Unfortunately, the computation of higher genus GW invariants of
compact Calabi--Yau manifolds (such as quintic 3-folds) turns out
to be a \emph{very} difficult problem.
For the last twenty years, many techniques have been developed.
These techniques have been very successful for so-called {\em
  semisimple cases such as Fano or toric Calabi--Yau 3-folds.}
In fact, they were understood thoroughly in several different ways.
But these techniques have little effect on our original problem on
compact Calabi--Yau 3-folds.
For example, using B-model techniques, Bershadsky, Cecotti, Ooguri and
Vafa (BCOV) have already proposed a conjectural formula for genus one
and two Gromov--Witten invariants of quintic 3-folds as early as
1993 \cite{BCOV} (see also \cite{YY04}).
It took another ten years for Zinger to prove BCOV's conjecture in
genus one \cite{Zi08}.
During the last ten years, an effort has been made to push Zinger's
technique to higher genus without success.
Nevertheless, the problem inspires many developments in the subject
such as the modularity problem \cite{BCOV, HKQ09}, FJRW-theory \cite{FJR13} and algebraic mathematical GLSM theory \cite{FJR15}.
It was considered as one of guiding problems in the subject.
The main purpose of this article is to prove BCOV's conjecture in
genus two.

To describe the conjecture explicitly, let us consider the so called
$I$-function of the quintic 3-fold
\begin{equation*}
  I(q,z) = z\sum_{d\geq 0} q^d \frac{\prod^{5d}_{k=1}(5H+kz)}{\prod^d_{k=1}(H+kz)^5}
\end{equation*}
where $H$ is a formal variable satisfying $H^4=0$.
The $I$-function satisfies the following Picard--Fuchs equation
\begin{equation*}
  \Big( D_H^4 - 5 q \prod_{k=1}^4 (5D_H+k z) \Big)  I(q,z) = 0,
\end{equation*}
where $D_H := z q\frac{d}{dq} + H$.
We separate $I(q, z)$ into components:
\begin{equation*}
  I(q,z)  = z I_0(q) \mathbf 1 + I_1(q) H  +z^{-1} I_2(q)H^2 + z^{-2} I_3(q) H^3
\end{equation*}

The genus zero mirror symmetry conjecture of quintic 3-fold can be
phrased as a relation between the $J$- and the $I$-function
$$J(Q)=\frac{I(q)}{I_0(q)}$$
up to a mirror map $Q = q e^{\tau_Q(q)}$, where
\begin{equation*}
  \tau_Q(q)  = \frac{I_1(q)}{I_0(q)}.
\end{equation*}
The leading terms of $I_0$ and $\tau_Q$ are
\begin{align*}
  I_0 (q)= \,&1+120\, q+113400 \,q^2+168168000\, q^3+O(q^4),\\
  \tau_Q (q) =\,& 770 \,q+717825 \,q^2+\frac{3225308000}{3} \, q^3+ O(q^4) .
\end{align*}
Now we introduce the following  degree $k$ ``basic'' generators
\begin{equation*}
  \X_k := \frac{d^k}{du^k} \left(\log \frac{I_0} L\right),\quad
  \Y_k:= \frac{d^k}{du^k} \left(\log \frac{I_0  I_{1,1}}{ L^2}\right),\quad
  \Z_k:=  \frac{d^k}{du^k} \left( \log (q^{\frac{1}{5}}L)\right),
\end{equation*}
where $I_{1,1} := 1+ q\frac{d}{dq} \tau_Q$,
$L := (1-5^5q)^{-\frac{1}{5}}$ and $du := L \frac{dq}{q}$.
Some numerical data for these generators are given in Remark~\ref{numer}.

Let $F^{GW}_g(Q)$ be the generating function of genus $g$
Gromov--Witten invariants of a quintic 3-fold.
The following is an equivalent formulation of the genus two mirror
conjecture (Conjecture~\ref{YY}) of \cite{YY04,HKQ09} (see Section~\ref{sec:equivalence} for
the argument).
\begin{conjecture}
  \label{mainconj}
  The genus $2$ GW generating function $F^{GW}_2(Q)$ for the quintic threefold is given by
  \begin{align*}
    F^{GW}_2(Q) = \, & \frac{I^2_0 }{L^2} \cdot \Big(
{\frac {70\,\X_{{3}}}{9}}+{\frac {575\,\X\X_{{2}}}{18}}+\frac{5 \Y\X_{{2}}}{6}+{\frac {557\,{\X}^{3}}{72}}-{\frac {629\,\Y{\X
}^{2}}{72}}-{\frac {23\,{\Y}^{2}\X}{24}}-\frac{{\Y}^{3}}{24}\\&+{
\frac {625\,\Z\X_{{2}}}{36}}-{
\frac {175\,\Z\Y\X}{9}}+{\frac {1441\,\Z_{{2
}}\X}{48}}-{\frac {25\,\Z({\X}^{2}+{\Y}^{2})}{24}}-{\frac {3125\,{\Z}^{2}(\X+\Y)}{288}}\\
&  +{\frac {41\,\Z_{{2}}\Y}{48}}-{\frac {625\,{\Z}^{3}}{144}}+{
\frac {2233\,\Z\Z_{{2}}}{128}}+{\frac {547\,\Z_{{3}}}{72}} \Big),
  \end{align*}
  where $Q = q e^{\tau_Q(q)}$.

  In particular, the leading terms of $F_2^{GW}$ are
  $$
  F^{GW}_2(Q)  = -\frac{5}{144} + \frac{575}{48} Q + \frac{5125}{2} Q^2 +\frac{7930375}{6} Q^3 +O(Q^4).
  $$
\end{conjecture}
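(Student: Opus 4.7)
My plan is to attack the conjecture through an auxiliary torus-equivariant moduli problem interpolating between the GW theory of the quintic and the more tractable theory of $\bP^4$ with $p$-fields. The natural candidate is an $N$-Mixed-Spin-P moduli space, in the spirit of the algebraic GLSM theory \cite{FJR15}, equipped with $N$ additional $p$-field components and a $(\bC^*)^N$-action. The key input is that for $N$ large enough a vanishing estimate forces the $(\bC^*)^N$-equivariant invariants to be polynomial in the equivariant parameters; the constant term in such a polynomiality relation then encodes $F^{GW}_2(Q)$ together with explicit twisted contributions which one computes separately at each torus fixed point.

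\textbf{Localization and Givental formalism.} Once polynomiality is in hand, I would apply virtual localization and write the equivariant invariants as a sum over decorated stable graphs. Each vertex carries a fixed locus from a point (twisted by an Euler class coming from the $p$-fields) or from $\bP^4$, and the graph contributions assemble through Givental's quantization formalism: vertex and edge factors are controlled by an $R$-matrix and a propagator read off from the asymptotic expansion of the $I$-function at its regular singular points. The generators $\X_k, \Y_k, \Z_k$ of the conjecture enter precisely as the explicit coefficients in these expansions, once the Picard--Fuchs equation is used to recognize $I_0$, $I_{1,1}$, and $L$ as the natural building blocks.

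\textbf{Genus two graph sum.} In genus $2$ the stable graphs are few in number: a single genus-two vertex, a loop (one genus-one vertex with a self-edge), a barbell (two genus-one vertices joined by an edge), a theta graph, together with the analogous graphs carrying trivalent genus-zero vertices. Each yields a product of $R$-matrix entries and propagators; summing them with the correct automorphism factors produces a polynomial in the generators. Using the differential relations forced on $I_0$, $I_{1,1}$, $L$ by the Picard--Fuchs operator, I would reduce all higher $u$-derivatives to the combinations $\X, \Y, \Z, \X_2, \X_3, \Z_2, \Z_3$ appearing on the right hand side, and match coefficients.

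\textbf{Main obstacle.} The genuine difficulty lies in two places. First, the large-$N$ polynomiality must be established carefully: one needs to show that the non-quintic torus-fixed loci contribute only explicitly computable twisted invariants, and that after localization no negative powers of the equivariant parameter survive in the piece that remembers $F^{GW}_2$. Second, the identification of the graph sum with the stated closed form involves substantial algebraic simplification, the unusual denominators $72$, $128$, $144$, $288$ emerging only after Picard--Fuchs cancellations. The regularity of the answer in the conifold variable $L$, the match with Zinger's genus one formula \cite{Zi08}, and the predicted leading coefficient $-5/144$ together with the low-degree Gromov--Witten numbers $575/48, 5125/2, \ldots$ furnish sharp consistency checks along the way.
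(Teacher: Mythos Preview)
Your outline follows the $N$-Mixed-Spin-$P$ strategy of Chang--Li--Li--Liu, which the paper explicitly cites as an \emph{independent} approach but does not itself use. The paper's route is different: it invokes the log compactification of \cite{CJR} for stable quotients with a \emph{single} $p$-field, so that $\bC^*$-localization produces only four nontrivial decorated graphs $\Gamma^2,\Gamma^1,\Gamma^0_a,\Gamma^0_b$ (plus the effective constant $-c_{2,0}$), each contributing a twisted $\bP^4$-correlator with an explicit insertion coming from the genus-one rubber vertex. These correlators are then evaluated via Givental--Teleman for the $\cO(5)$-twisted theory at \emph{nonzero} equivariant parameter $t$, and the wall-crossing $F_2^{GW}=I_0^{2}F_2^{SQ}$ finishes the argument. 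What the MSP route would buy you is the large-$N$ polynomiality constraint, which in principle replaces the separate identification of $c_{2,0}$; what the paper's route buys is a short list of localization graphs and no tower of auxiliary fixed-locus types indexed by partitions of $N$.

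There is, however, a real gap in your sketch beyond the difference in strategy. You assert that Picard--Fuchs relations will reduce all $R$-matrix data to the basic generators $\X_k,\Y_k,\Z_k$. In the paper this is exactly what \emph{fails}: because the twisting parameter $t$ cannot be specialized to zero (it appears in the denominators of the insertions $(\t-5H)^{-1}(\t-5H-\psi)^{-1}$), the quantum connection of the twisted theory is not determined by $I_0,I_{1,1},L$ alone. Four extra generators $\QQ,\PP,\tilde\QQ,\tilde\PP$, built from the subleading entries $I_{1,1;a},I_{2,2;b}$ of the connection matrix $A$, infiltrate every individual graph contribution $\Cont'_{\Gamma^2},\Cont'_{\Gamma^1},\Cont'_{\Gamma^0}$. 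Their cancellation in the signed sum is the central computation of the paper and is verified only by explicit formula-matching (Propositions~\ref{g2cont}, \ref{g1cont}, \ref{g0contp}), not by any a priori Picard--Fuchs mechanism. Your proposal gives no indication of this phenomenon or of its analogue in the MSP framework, so as written it is a plausible outline of where a proof might go rather than a proof.
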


The following is our Main Theorem:
\begin{theorem}
  The above genus two mirror conjecture of quintic 3-fold holds.
\end{theorem}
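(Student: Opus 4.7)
The plan is to prove the Main Theorem by extracting $F^{GW}_2(Q)$ from an equivariant localization on the moduli space of N-Mixed Spin P-fields (NMSP), and then matching the resulting expression with the formula stated in Conjecture~\ref{mainconj}. The strategy proceeds in three stages.

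First, I would introduce the NMSP moduli stack $\mathcal W_{g,n,d}$ for $g=2$ together with its $\T=(\bC^*)^{N+1}$-action and equivariant virtual fundamental class, engineered so that the $\T$-fixed loci contribute in three types: the genus two GW theory of the quintic, narrow FJRW invariants of the Fermat quintic singularity, and mixed boundary contributions described by graph sums with semisimple vertex contributions. Virtual localization (Graber--Pandharipande) then expresses the equivariant invariants as a sum over decorated stable graphs whose vertices are labeled by the above three types.

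Next, two structural inputs reduce the problem to something manageable. \emph{Polynomiality} of the equivariant invariants in the equivariant parameters follows from a cosection argument on the non-equivariant virtual class, bounding the degree of each graph contribution as a Laurent polynomial in the $\T$-weights. \emph{Vanishing} of the full equivariant invariant for $N$ sufficiently large forces nontrivial linear relations among the graph contributions. Combined, these isolate the genus two quintic contribution from the remainder of the graph sum.

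Finally, the semisimple vertex contributions are determined by the Givental--Teleman classification: the relevant $R$-matrix is read off from the asymptotic expansion of the $I$-function at the conifold point $L=\infty$, and the normalizations dictated by the mirror map $Q=qe^{\tau_Q(q)}$, together with the change of variables $du=L\,dq/q$, organize the $R$-matrix coefficients into the polynomial ring $\bQ[\X_k,\Y_k,\Z_k]$. Assembling every contribution and collecting terms should produce the closed formula of Conjecture~\ref{mainconj}; the leading coefficients $-5/144,\ 575/48,\ \ldots$ serve simultaneously as internal consistency checks and, where needed, as determining data for residual constants that the structural constraints alone leave free.

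The main obstacle lies in the second stage and in the genus two combinatorics of the third. Establishing polynomiality requires a careful analysis of the cosection and of the degrees and pole orders of $R$-matrix entries over each fixed locus, and the proliferation of genus two stable graphs (including dumbbell and theta configurations whose vertices live in $\M_{1,1}$ and $\M_{2,0}$) demands a rigorous bookkeeping of $R$-matrix actions at every vertex so that the final answer closes inside the ring generated by the $\X_k,\Y_k,\Z_k$. It is this closure, rather than the mere extraction of numerical coefficients, that is the genuinely subtle point of the argument.
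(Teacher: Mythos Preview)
Your proposal outlines the NMSP (Mixed-Spin-$P$-field) strategy of Chang--Li--Li--Liu, which is a genuinely different geometric input from the paper's. The paper instead uses the log compactification of the moduli of stable quotients with a $p$-field constructed in \cite{CJR}, equipped with a single $\bC^*$-action scaling the $p$-field; localization produces five explicit graph types (Figure~\ref{fig:g2graph}), whose contributions are twisted GW invariants of $\bP^4$ together with a single effective constant $c_{2,0}$ fixed by the known degree-zero invariant $N_{2,0}=-5/144$. There are no FJRW fixed loci and no large-$N$ polynomiality/vanishing mechanism in the paper's route; the wall-crossing $F_2^{GW}=I_0^2 F_2^{SQ}$ replaces what your $(\bC^*)^{N+1}$ vanishing would do.

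Both approaches feed into Givental--Teleman computations with an $R$-matrix determined from the $I$-function, so the third stage of your plan overlaps substantially with Sections~\ref{sec:structure}--\ref{proofofproposition}. The principal divergence in execution is this: the paper's twisted theory (with nonzero twist parameter $t$) does \emph{not} close in $\bQ[\X_k,\Y_k,\Z_k]$. Four extra generators $\QQ,\tilde\QQ,\PP,\tilde\PP$ are required, and their cancellation in the final sum over graphs is the ``miracle'' the authors explicitly flag. Your assertion that the answer closes directly in the basic generators may well hold in the NMSP framework (later work of Chang--Guo--Li established this), but it needs its own argument and does not follow from the $R$-matrix structure alone.

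What NMSP buys is a more uniform packaging of constraints via large-$N$ vanishing, better adapted to higher genus. What the paper's route buys is a small, completely explicit graph set in genus two and a direct evaluation of each piece (Propositions~\ref{g2cont}, \ref{g1cont}, \ref{g0contp}). As written, your proposal is a viable research plan rather than a proof: the cosection-based polynomiality, the control of FJRW contributions in genus two, and the closure in the basic-generator ring are each substantial theorems that you have stated but not argued.
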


\begin{remark}
  A consequence of above conjecture is that $(L/I_0)^2(q) F^{GW}_2(Q)$
  (more generally, $(L/I_0)^{2g - 2}(q) F^{GW}_g(Q)$) is a homogeneous
  polynomial of the generators $\X_k, \Y_k,\Z_k$.
  We refer to this property as {\em finite generation}.
  On the other hand, the original conjecture (Conjecture~\ref{YY}) is
  an inhomogeneous polynomial of five generators.
  We found it easier to work with a homogeneous polynomial than an
  inhomogeneous polynomial.
  Since the Taylor expansions of the generators are known, we can
  easily compute numerical genus two GW-invariants for any degree.
\end{remark}

As we mentioned previously, it has been ten years since Zinger proved the genus one
BCOV mirror conjecture.
A key new advancement during last ten years was the understanding of global
mirror symmetry which was in the physics literature in the beginning but
somehow lost in its translation into mathematics in the early 90's.
The idea of the global mirror symmetry \cite{CdOGP, BCOV, HKQ09, CR11}
is to view GW theory as a particular limit (large complex structure
limit) of the global B-model theory.
Physicists use the results about the other limits such as the Gepner
limit and conifold limit to yield the computation of the large complex
structure limit/GW theory.
Interestingly, one of the key pieces of information they used is the
regularity of the Gepner limit.
The latter can be interpreted as the existence of FJRW theory.
A natural consequence of the above global mirror symmetry perspective
is a prediction that the GW/FJRW generating functions are quasi-modular forms in some sense and hence are
polynomials of certain finitely many canonical generators \cite{BCOV, HKQ09, YY04}.
This imposes a strong structure for GW theory and we refer it as the
{\em finite generation property}.
For anyone with experience on the complexity of numerical GW
invariants, it is not difficult to appreciate how amazing the finite
generation property is!
In fact, it immediately reduces an infinite computation for all degree
to a finite computation of the coefficients of a polynomial.
Therefore, it should be considered as one of the fundamental problems
in the subject of higher genus GW theory.

The above global mirror symmetry framework was successfully carried
out by the third author and his collaborators \cite{KS, MRS, IMRS} for
certain maximal quotients of Calabi--Yau manifolds.
These examples are interesting in their own right.
Unfortunately, we understand very little about the relation between
the GW-theory of a Calabi--Yau manifold and its quotient.
Hence, the success on its quotient has only a limited impact on our
original problem.
Several years ago, an algebraic-geometric curve-counting theory was
constructed by Fan--Jarvis--Ruan for so called {\em gauged linear
  sigma model (GLSM)} (see \cite{Wi93, Ho13} for its physical origin).
One application of mathematical GLSM theory is to interpret the above
global mirror symmetry as wall-crossing problem for a certain
stability parameter $\epsilon$ of the GLSM-theory.
It leads to a complete new approach to attack the problem without
considering B-model at all.
The first part of the new approach is to vary the stability parameter
from $\epsilon=\infty$ (stable map theory) to $\epsilon=0^+$
(quasimap \cite{CFKM14} or stable quotient \cite{MOP11} theory) and
has been successfully carried out recently by several authors
\cite{CFK, CJR1, CJR2, Z}.
Suppose that $F^{SQ}_{g}(q)$ is the the genus $g$ generating function
of stable quotient theory.
Then, the above authors have proved
\begin{equation*}
  F^{GW}_g(Q)=I_0^{2-2g}(q) F^{SQ}_g(q),
\end{equation*}
which explains the appearance of $I^2_0$ at the right hand side of the
conjecture.
In the current paper, we take the next step to calculate the genus two
generating function in quasimap theory and verify the conjectural
formula in \cite{BCOV, YY04}.

The current paper relies on certain geometric input from \cite{CJR}
(see also \cite{CJRSZ}) which we now describe.
Recall that the virtual cycle of the GLSM (stable map with $p$-field
in this case) moduli space was constructed using cosection
localization on an open moduli space \cite{KiLi13}.
The cosection is not $\bC^*$-invariant which prevents us from applying
the localization technique.
A naive idea is to construct a compactification of the GLSM
moduli space such that the cosection localized class can be identified
with the virtual cycle of the compactified moduli space.
Hopefully, the latter carries $\bC^*$-action and the localization
formula can be applied.
Unfortunately, it is not easy to make naive idea work due to the
difficulty of extending the cosection to the compactified moduli space.
Working with Qile Chen, the last two authors solved the problem by
introducing a certain ``reduced virtual cycle'' on an appropriate log
compactification of the GLSM moduli space.
In a sense, the current article and \cite{CJR} belong together.
Of course, \cite{CJR} provides a general tool with applications beyond
the current article.
We will briefly describe \cite{CJR} (specialized to a quintic 3-fold)
in Section~\ref{sec:geo}.

Taking the localization formula of \cite{CJR} as an input, we can
express genus $g$ Gromov--Witten invariants of a quintic 3-fold as a
graph sum of twisted equivariant Gromov--Witten invariants of $\bP^4$
and certain {\em effective invariants}.
When $g=2, 3$, the effective invariants can be computed from known
degree zero Gromov--Witten invariants.
The main content of the current article is to extract a closed formula
for the generating functions.
This is of course difficult in general.
We solve it using Givental formalism.
A subtle and yet interesting phenomenon is the choice of twisted
theory.
The general twisted theory naturally depends on six equivariant
parameters, five for the base $\bP^4$ and one for the twist.
It is complicated to study the general twisted theory, and therefore
Zagier--Zinger \cite{ZZ08} specialize the equivariant parameters of
the base to scalar multiples of $(1, \xi, \xi^2, \xi^3, \xi^4, 0)$,
where $\xi$ is a primitive fifth root of unity.
Under this specialization, they show that the twisted theory is
generated by the five generators predicted by physicists.
Unfortunately, in our work we cannot set the equivariant parameter for
the twist to zero.
As a consequence, we have to introduce four \emph{extra generators}.
It was a miracle to us that the terms involving the four extra
generators cancel and we have our theorem!
The appearance of four extra generators has a direct impact to our
future work for $g \geq 3$.
For example, while no additional geometric input is needed to apply
our method to genus $3$ to prove the conjectural formula of
Klemm-Katz-Vafa \cite{KKV99}, and we could proceed with the methods
developed in this paper by brutal force, the resulting proof would not
be very illuminating.
Recall that there is a conjectural formula up to genus 51 by A.~Klemm
and his collaborators \cite{HKQ09}.
Our eventual goal is to reach genus 51 and go beyond.
To do so, we have to understand better the cancellation of terms
involving extra generators.
We will leave this to a future research \cite{GJR}.

The paper is organized as follows.
In Section~\ref{sec:geo}, we will summarize the relevant compactified
moduli space and its localization formula from \cite{CJR}.
The detailed analysis of contributions of the localization graphs and
their closed formulae in terms of generators are stated in
Section~\ref{sec:twist}.
The main theorem directly follows from these closed formulae.
In Section~\ref{sec:structure}, we derive important results about the
twisted theory, which we then apply in
Section~\ref{proofofproposition} to yield a proof of the closed
formulae.
Finally, we would like to mention several independent approaches to
higher genus problem by Maulik and Pandharipande \cite{MaPa06},
Chang--Li--Li--Liu \cite{CLLL} and Guo--Ross \cite{GuRo,GuRo2}.

The last two authors would like to thank Qile Chen for the collaboration
which provides the geometric input to the current work.
The first author was partially supported by the NSFC grants 11431001 and 11501013.
The second author was partially supported by a Simons Travel Grant.
The third author was partially
supported by NSF grant DMS 1405245 and NSF FRG grant DMS 1159265.

\section{A localization formula}
\label{sec:geo}

The main geometric input is a formula computing GW-invariants of
a quintic 3-fold in terms of a twisted theory and a certain
``effective'' theory.
This formula is obtained by localization on a compactified moduli
space of stable maps with a $p$-field.
The proof of the formula and details about the moduli space can be
found in \cite{CJR} (see also \cite{CJRSZ}).

In Section~\ref{sec:geo:mod}, we give an overview of the definition of
the relevant moduli spaces, and in Section~\ref{sec:geo:loc}, we
explain the localization formula in the general case.
We then, in Section~\ref{sec:geo:g2}, specialize to genus two.
Finally, in Section~\ref{sec:geo:ex}, we illustrate the formula by
directly computing the genus two, degree one invariant.

\subsection{Moduli space}
\label{sec:geo:mod}

Let $\M_{g, 0}(Q_5, d)$ denote the moduli space of genus-$g$,
unpointed, degree-$d$ stable maps $f\colon C \to Q_5$ to the quintic
threefold $Q_5 \subset \bP^4$.
This moduli space admits a perfect obstruction theory and hence a
virtual class $[\M_{g, 0}(Q_5, d)]^\vir$ of virtual dimension zero,
whose degree is defined to be the Gromov--Witten invariant $N_{g, d}$.

For the computations in this paper, it will be much more convenient to
work with the moduli space $\oQ_{g, 0}(Q_5, d)$ of genus-$g$,
unpointed, degree-$d$ stable quotients (or quasimaps) to the quintic
threefold $Q_5 \subset \bP^4$ instead.
We refer to \cite{MOP11, CFKM14} for the definition of this moduli
space, mentioning here just that it parameterizes prestable curves $C$
together with a line bundle $L$ and sections $s \in H^0(L^{\oplus 5})$
satisfying the equation of $Q_5$ and a stability condition.
The moduli space $\oQ_{g, 0}(Q_5, d)$ also admits a perfect
obstruction theory and virtual class $[\oQ_{g, 0}(Q_5, d)]^\vir$,
which after integration defines the stable quotient invariant
$N_{g, d}^{SQ}$.
By the wall-crossing formula \cite{CFK, CJR1}, the information of the
$N_{g, d}^{SQ}$ is equivalent to the information of the $N_{g, d}$.
In genus $g \ge 2$, the wall-crossing formula says that, if
\begin{equation*}
  F_g^{GW}(q) := \sum_{d = 0}^\infty q^d N_{g, d}, \qquad F_g^{SQ}(q) := \sum_{d = 0}^\infty q^d N_{g, d}^{SQ}
\end{equation*}
are the generating series of stable map and stable quotient
invariants, then
\begin{equation*}
  F_g^{GW}(Q) = I_0^{2g - 2} F_g^{SQ}(q).
\end{equation*}

It is difficult to directly access the stable quotient (or stable map)
theory of $Q_5$, in part because $Q_5$ is a ``non-linear'' object.
By results of Chang--Li \cite{ChLi12}\footnote{To be precise, we use
  the result of Chang--Li to rewrite Gromov--Witten invariants of the
  quintic in terms of stable maps with $p$-fields.
  After that, we apply the wall-crossing \cite{CJR2} to move to stable
  quotients with a $p$-field.}, we can instead work on the more linear
moduli space $\oQ_{g, 0}(\bP^4, d)^p$ of stable quotients with a
$p$-field, that is the cone
$\pi_* (\omega_\pi \otimes \mathcal L^{- \otimes 5})$ over
$\oQ_{g, 0}(\bP^4, d)$.
Here, $\pi$ denotes the universal curve and $\mathcal L$ denotes the
universal line bundle $L$.
The moduli space $\oQ_{g, 0}(\bP^4, d)^p$ also has a perfect
obstruction theory and hence a virtual class.
However, because $\oQ_{g, 0}(\bP^4, d)^p$ is in general not compact,
this virtual class cannot directly be used to define invariants.
To circumvent this problem, Chang--Li introduce a cosection $\sigma$
of the obstruction sheaf, and show that
\begin{equation*}
  N_{g, d}^{SQ}
  = \int_{[\oQ_{g, 0}(\bP^4, d)^p]^\vir_\sigma} (-1)^{1 - g + 5d},
\end{equation*}
where $[\oQ_{g, 0}(\bP^4, d)^p]^\vir_\sigma$ is the cosection
localized virtual class of $\oQ_{g, 0}(\bP^4, d)^p$, which is
supported on the compact moduli space $\oQ_{g, 0}(Q_5, d)$.

The main new idea of \cite{CJR} is to define a modular
compactification $X_{g, d}$ of $\oQ_{g, 0}(\bP^4, d)^p$ with a perfect
obstruction theory extending the one of $\oQ_{g, 0}(\bP^4, d)^p$ such
that the cosection $\sigma$ extends without acquiring additional
degeneracy loci.
It is then easy to see that
\begin{equation*}
  \int_{[\oQ_{g, 0}(\bP^4, d)^p]^\vir_\sigma} (-1)^{1 - g + 5d}
  = \int_{[X_{g, d}]^\vir} (-1)^{1 - g + 5d}.
\end{equation*}
If $X_{g, d}$ furthermore admits a non-trivial torus action (and the
perfect obstruction theory is equivariant), we can apply virtual
localization to the right hand side to express it in terms of
hopefully simpler fixed loci.

A suitable compactification $X_{g, d}$ is given by a space of stable
quotients $(C, L, s)$ together with a log-section
$\eta\colon C \to \bP := \bP(\omega_C \otimes L^{-\otimes 5} \oplus
\cO_C)$ where the target is equipped with the divisorial
log-structure corresponding to the infinity section.
The open locus where the log-section $\eta$ does not touch the
infinity section is clearly isomorphic to $\oQ_{g, 0}(\bP^4, d)^p$,
and the canonical perfect obstruction theory of $X_{g, n, d}$ extends
the one of $\oQ_{g, 0}(\bP^4, d)^p$.
Unfortunately, the cosection $\sigma$ becomes singular on the
complement of $\oQ_{g, 0}(\bP^4, d)^p$, Still, it can be extended to a
homomorphism to a non-trivial line bundle $L_N^\vee$, which comes with
a canonical section $\cO \to L_N^\vee$, and we can define a new
(\emph{reduced}) perfect obstruction theory by ``removing'' (taking
the cone) under the induced map from the obstruction theory to the
complex $[\cO \to L_N^\vee]$.
With this reduced perfect obstruction theory, $X_{g, d}$ satisfies all
desired properties.

\begin{remark}
  The construction of the modular compactification and its reduced
  perfect obstruction theory can also be carried out in the setting of
  stable maps, and it satisfies all of the analogous properties.
\end{remark}

\subsection{Localization}
\label{sec:geo:loc}

We now consider virtual localization \cite{GrPa99} of the reduced
perfect obstruction theory of $X_{g, d}$ with respect to the
$\mathbb C^*$-action of $X_{g, d}$ that scales $\eta$.
Let $t$ be the corresponding equivariant parameter.
The fixed loci of the $\mathbb C^*$-action are indexed by bivalent
graphs $\Gamma$ with $n$ legs.
We let $V(\Gamma)$, $E(\Gamma)$ be the corresponding sets of vertices
and edges.
The vertices $v \in V(\Gamma)$ correspond to components (or isolated
points) of the curve $C$ sent to either the zero or infinity section.
We define the bivalent structure
$V(\Gamma) = V_0(\Gamma) \sqcup V_\infty(\Gamma)$ accordingly.
Furthermore, let $g(v)$ (respectively, $d(v)$) be the genus of such a
component (respectively, the degree of $L$ on this component).
A vertex $v$ is \emph{unstable} if and only if it corresponds to an
isolated point of $C$, that is when $g(v) = 0$ and either $n(v) = 1$
or $n(v) = 2$ and $d(v) = 0$.
The edges $e \in E(\Gamma)$ correspond to the remaining components of
$C$, which are rational, each contracted to a point in $\bP^4$, and
mapped via a degree-$\delta(e)$ Galois cover to the corresponding
component of $\bP$ (with a possible base point at the zero section).

The fixed locus corresponding to a dual graph $\Gamma$ is (up to a
finite map) isomorphic to
\begin{equation*}
  M_\Gamma := \prod_{v \in V_0(\Gamma)} \oQ_{g(v), n(v)}(\bP^4, d(v)) \times_{(\bP^4)^{|E(\Gamma)|}} \prod_{v \in V_\infty(\Gamma)} \oQ_{g(v), n(v)}(\bP, d(v), \mu(v))^\sim,
\end{equation*}
where unstable moduli spaces $\oQ_{g(v), n(v)}(\bP^4, d(v))$ are
defined to be a copy of $\bP^4$, and the rubber moduli space
$\oQ_{g(v), n(v)}(\bP, d(v), \mu(v))^\sim$ generically parameterizes
stable quotients $(C, L, s)$ together with a nonzero holomorphic
section of $\omega \otimes L^{-\otimes 5}$ up to scaling with zeros
prescribed by $\mu(v)$.
Here, $\mu(v)$ is the $n(v)$-tuple of integers consisting of 0 for
each leg, and $\delta(e) - 1$ for each edge $e$ at $v$.
In analogy with \cite{Po06}, we will also refer to the rubber moduli
space as the ``effective'' moduli space.

In order for a fixed locus corresponding to a graph to be non-empty,
there are many constraints on the decorated dual graph.
First, we must have
\begin{equation*}
  g = \sum_{v \in V(\Gamma)} g(v) + h^1(\Gamma), \qquad d = \sum_{v \in V(\Gamma)} d(v).
\end{equation*}
Second, there is a stability condition which says that for any vertex
$v \in V_0(\Gamma)$ of genus zero and valence one, the corresponding
unique edge $e$ needs to satisfy $\delta(e) > 1 + 5d(v)$.
Third, for every $v \in V_\infty(\Gamma)$, the partition $\mu(v)$ must
have size $2g(v) - 2 - 5d(v) \ge 0$.
Note that the third condition implies that $g(v) \ge 1$ for each
$v \in V_\infty(\Gamma)$.
A localization graph satisfying all these conditions is depicted in
Figure~\ref{fig:glocgraph}.

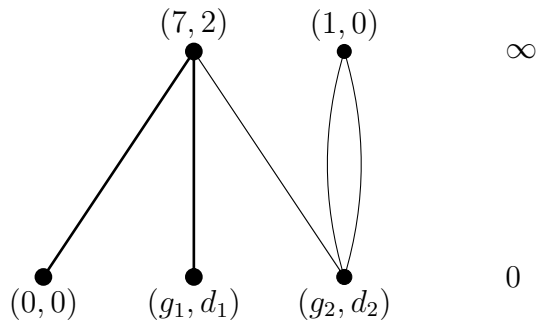
\begin{figure}
  \centering
  \begin{tikzpicture}
    \draw[fill,line width=1pt] (0, 0) circle(1mm) node[below] {$(0, 0)$} -- (2, 3);
    \draw[fill,line width=1pt] (2, 0) circle(1mm) node[below] {$(g_1, d_1)$} -- (2, 3) circle(1mm) node[above] {$(7, 2)$};
    \draw[fill] (2, 3) -- (4, 0) circle(1mm) node[below] {$(g_2, d_2)$};
    \draw (4, 0) .. controls (3.7, 1) and (3.7, 2) .. (4, 3);
    \fill (4, 3) circle(1mm) node[above] {$(1, 0)$};
    \draw (4, 0) .. controls (4.3, 1) and (4.3, 2) .. (4, 3);
    \draw (6, 0) node[right] {$0$};
    \draw (6, 3) node[right] {$\infty$};
  \end{tikzpicture}
  \caption{A localization graph in genus $g_1 + g_2 + 9$ and degree
    $d_1 + d_2 + 2$.
    The bottom vertices lie in $V_0$, the top vertices lie in
    $V_\infty$.
    The pair $(g(v), d(v))$ is specified at each vertex.
    An edge $e$ is thick if $\delta(e) = 2$.
    Otherwise, $\delta(e) = 1$.}
  \label{fig:glocgraph}
\end{figure}

The contribution of a decorated graph $\Gamma$ can then be written as
\begin{multline}
  \label{eq:localization}
  \frac 1{|\Aut|} \int_{M_\Gamma} \Delta^! \left(
  \prod_{v \in V_0(\Gamma)} \frac{e(-R\pi_{v, *}(\omega_{\pi_v} \otimes \mathcal L^{-\otimes 5}) \otimes [1])}{\prod_{e\text{ at }v} (t - 5\ev_e^*(H))(\frac{t - 5\ev_e^*(H)}{\delta(e)} - \psi_e)} \cap [\oQ_{g(v), n(v)}(\bP^4, d(v))]^\vir\right. \\
  \left.\times \prod_{v \in V_\infty(\Gamma)} \frac t{-t + 5H - \psi_0} \cap [\oQ_{g(v), n(v)}(\bP, d(v), \mu(v))^\sim]^\red
  \times \prod_{e \in E(\Gamma)} \frac 1{\delta(e) \prod_{i = 1}^{\delta(e) - 1} \frac{t - 5\ev_e^*(H)}{\delta(e)}}\right),
\end{multline}
with the notation:
\begin{itemize}
\item
  $\Delta\colon (\bP^4)^{|E(\Gamma)|} \to (\bP^4)^{|E(\Gamma)|} \times
  (\bP^4)^{|E(\Gamma)|}$: diagonal map
\item $\pi_v$: universal curve corresponding to $v \in V_0(\Gamma)$
\item $\ev_e^*(H)$: pullback of $H$ via any of the two evaluation maps
  corresponding to $e$
\item $[1]$: line bundle with Chern class $t$
\item $5H - \psi_0$: a universal divisor class on the effective moduli space
\item
  $[\oQ_{g(v), n(v)}(\bP, d(v),
  \mu(v))^\sim]^\red$: a reduced virtual class, which is
  discussed below
\end{itemize}
In the unstable case that $v \in V_0(\Gamma)$ has valence $2$ and is
connected to two edges $e_1$ and $e_2$, we define
\begin{equation}
  \label{eq:unstable}
  \frac{e(-R\pi_{v, *}(\omega_{\pi_v} \otimes \mathcal L^{-\otimes 5}) \otimes [1])}{\prod_{e\text{ at }v} (t - 5\ev_e^*(H))(\frac{t - 5\ev_e^*(H)}{\delta(e)} - \psi_e)} \cap [\oQ_{g(v), n(v)}(\bP^4, d(v))]^\vir
\end{equation}
to be
\begin{equation*}
  \frac 1{(t - 5H)\left(\frac{t - 5H}{\delta(e_1)} + \frac{t - 5H}{\delta(e_2)}\right)},
\end{equation*}
and when $v$ has genus zero and is connected to a single edge $e$, we
define \eqref{eq:unstable} as
\begin{equation}
  \label{eq:unstable01}
  \frac{(t - 5H)^{5d(v) + 1} (5d(v) + 1)!}{\delta(e)^{5d(v) + 1}}.
\end{equation}

Most parts of \eqref{eq:localization} are effectively computable, such
as the integrals over the moduli spaces for $v \in V_0(\Gamma)$, which
are twisted \cite{CoGi07} invariants of $\bP^4$.
The most difficult part of the formula is related to the effective
moduli spaces.
Fortunately, these integrals are highly constrained.

The reduced virtual class
\begin{equation*}
  [\oQ_{g, n}(\bP, d, \mu)^\sim]^\red
\end{equation*}
has dimension
\begin{equation*}
  n - (2g - 2 - 5d),
\end{equation*}
which is one more than the naive virtual dimension.
Together with pullback properties of the reduced virtual class, this
implies that the virtual class vanishes unless all parts of $\mu$ are
0 or 1.
Furthermore, there is a dilaton equation which implies that
\begin{equation*}
  \int_{[\oQ_{g, n}(\bP, d, \mu)^\sim]^\red} \psi_0^{n - 2g + 2}
  = \frac{(2g - 2 + n)!}{(4g - 4 - 5d)!} \int_{[\oQ_{g, 2g - 2}(\bP, d, (1, \dotsc, 1))^\sim]^\red} 1
\end{equation*}
as long as $g \ge 2$.
When $g = 1$, we need to have $d = 0$, and
$\oQ_{1, n}(\bP, 0, (0, \dotsc, 0))^\sim \cong \oQ_{1, n} \times
\bP^4$.
The virtual class is also explicit, and given by
\begin{equation*}
  [\oQ_{1, n}(\bP, 0, (0, \dotsc, 0))^\sim]^\red
  = e((T_{\bP^4} + \cO - \cO_{\bP^4}(5)) \otimes \mathbb E^\vee)
  = 205 H^4 + 40 H^3 \lambda_1,
\end{equation*}
where $\mathbb E$ denotes the Hodge (line) bundle, and $\lambda_1$ its
first Chern class.
There are also explicit formulae for $5H - \psi_0$ in this case, for
instance, when $n = 1$, we have
$5H - \psi_0 = 5H - \lambda_1 = 5H - \psi_1$, and, when $n = 2$, we
have $5H - \psi_0 = 5H - \psi_1 = 5H - \psi_2$.

All in all, \eqref{eq:localization} gives an explicit computation of
any Gromov--Witten invariant of the quintic, up to the determination
of the constants
\begin{equation*}
  c_{g, d} := \int_{[\oQ_{g, 2g - 2}(\bP, d, (1, \dotsc, 1))]^\red} 1\in \mathbb Q,
\end{equation*}
which are defined for every $g \ge 2, d \ge 0$ such that
$d \le \frac{2g - 2}5$.
We call these constants ``effective invariants''.
Note that for any particular genus, only finitely many of these
invariants are needed.

\begin{remark}
  All of the discussion of this section can also be carried out in the
  stable maps setting, with only the following essential differences:
  Vertices $v \in V_0(\Gamma)$ with $(g(v), n(v)) = (0, 1)$ are stable
  unless $d(v) = 0$.
  Therefore, for such vertices $v$, the corresponding edge $e$ only
  needs to have $\delta(e) > 1$.
  Accordingly, we also need to replace $5d(v) + 1$ by $1$ in
  \eqref{eq:unstable01}.

  Because of this, there are many more localization graphs in the
  stable maps setting than in the stable quotient setting.
  In fact, while in any case, there are only finitely many types of
  localization graphs for any fixed $g$ and $d$, in the stable
  quotient setting, for fixed $g$, the number of localization graphs
  does not depend on $d$ (as long as $d$ is large enough).
  This is the main technical advantage of the stable quotient theory
  for our purpose.
\end{remark}

\subsection{Formula in genus two}
\label{sec:geo:g2}

\begin{figure}
  \centering
  \begin{tikzpicture}
    \fill (0, 0) circle(1mm) node[below] {$(2, d)$};
    \draw[fill] (2, 0) circle(1mm) node[below] {$(1, d)$} -- (2, 3) circle(1mm) node[above] {$(1, 0)$};
    \draw[fill] (4, 3) node[above] {$(1, 0)$} circle(1mm) -- (5, 0) circle(1mm) node[below] {$(0, d)$} -- (6, 3) circle(1mm) node[above] {$(1, 0)$};
    \draw (8, 0) .. controls (7.7, 1) and (7.7, 2) .. (8, 3);
    \draw (8, 0) .. controls (8.3, 1) and (8.3, 2) .. (8, 3);
    \fill (8, 0) circle(1mm) node[below] {$(0, d)$};
    \fill (8, 3) circle(1mm) node[above] {$(1, 0)$};
    \fill (11, 3) circle(1mm) node[above] {$(2, 0)$};
    \fill (10, 0) node[below] {$(0, 0)$} circle(1mm);
    \fill (12, 0) node[below] {$(0, 0)$} circle(1mm);
    \draw[line width=1pt] (10, 0) -- (11, 3) -- (12, 0);
    \draw (0,-1) node[below] {$\Gamma^2$};
    \draw (2,-1) node[below] {$\Gamma^1$};
    \draw (5,-1) node[below] {$\Gamma^0_b$};
    \draw (8,-1) node[below] {$\Gamma^0_a$};
    \draw (13, 0) node[right] {$0$};
    \draw (13, 3) node[right] {$\infty$};
  \end{tikzpicture}
  \caption{The localization graphs in genus two.}
  \label{fig:g2graph}
\end{figure}
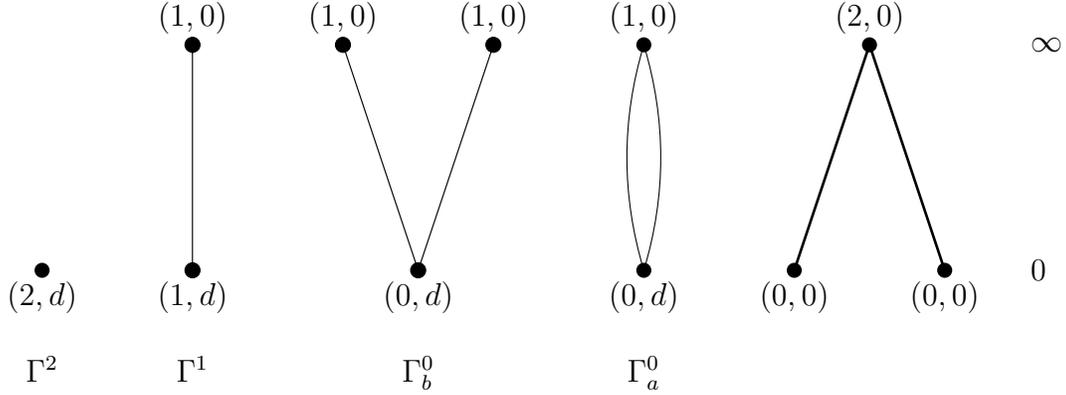

We now specialize the localization formula to genus two, and apply it
to the computation of the generating series
\begin{equation*}
  F_2^{SQ}(q)
  = \sum_{d = 0}^\infty (-1)^{1 - g + 5d} q^d \int_{[X_{2, d}]^\vir} 1.
\end{equation*}
There are 5 localization graphs, which are shown in
Figure~\ref{fig:g2graph}.
Note that the fifth graph can only occur when $d = 0$, and that its
contribution is given by the constant $-c_{2, 0}$.
We label the first four graphs by $\Gamma^2$, $\Gamma^1$, $\Gamma^0_b$
and $\Gamma^0_a$, respectively.

We introduce the bracket notation
\begin{multline*}
  \langle\alpha_1, \dotsc, \alpha_n\rangle_{g, n}^{t,SQ}
  = \sum_{d = 0}^\infty q^d \int_{[\oQ_{g, n}(\bP^4, d)]^\vir} e(R\pi_* \mathcal L^{\otimes 5} \otimes [1]) \prod_{i = 1}^n \ev_i^*(\alpha_i) \\
  = \sum_{d = 0}^\infty (-1)^{1 - g + 5d} \int_{[\oQ_{g, n}(\bP^4, d)]^\vir} e(-R\pi_* (\omega \otimes \mathcal L^{\otimes -5}) \otimes [1]) \prod_{i = 1}^n \ev_i^*(\alpha_i).
\end{multline*}

The contribution of $\Gamma^2$ is then simply given by
\begin{equation*}
  \Cont_{\Gamma^{2}} :=
  \langle\rangle_{2,0}^{t,SQ}.
\end{equation*}

For the contribution of $\Gamma^1$, we first need the computation
\begin{equation*}
  \int_{\M_{1, 1} \times \bP^4} \frac{t}{-t + 5H - \psi_0} (205 H^4 + 40 H^3 \lambda_1)
  = -\frac 53 H^3 + \frac 5{24} H^4 t^{-1}.
\end{equation*}
Therefore,
\begin{equation*}
  \Cont_{\Gamma^{1}} := -\left\langle \frac{-\frac{5}{3}H^3+\frac{5}{24}H^4 t^{-1}}{(t-5H)(t-5H-\psi)} \right\rangle_{1,1}^{t, SQ},
\end{equation*}
and we can also directly compute:
\begin{equation*}
  \Cont_{\Gamma_{b}^{0}}:= \left\langle \frac{-\frac{5}{3}H^3+\frac{5}{24}H^4 t^{-1}}{(t-5H)(t-5H-\psi_1)},
\frac{-\frac{5}{3}H^3+\frac{5}{24}H^4 t^{-1}}{(t-5H)(t-5H-\psi_2)} \right\rangle_{0,2}^{t, SQ}
\end{equation*}

Finally, for the contribution of $\Gamma_a^0$, we need to know
\begin{equation*}
  \int_{\M_{1, 2} \times \bP^4} \frac{t}{-t + 5H - \psi_0} (205 H^4 + 40 H^3 \lambda_1)
  = \frac 53 H^3 t^{-1} + \frac{65}8 H^4 t^{-2}.
\end{equation*}
Thus,
\begin{equation*}
  \Cont_{\Gamma_{a}^{0}} :=  \left\langle  \frac{\frac{5}{3} (H^3 \otimes H^4 + H^4 \otimes H^3) t^{-1} + \frac{65}{8} H^4 \otimes H^4 t^{-2} }{(t-5H)(t-5H-\psi_1) \, (t-5H)(t-5H-\psi_2)} \right\rangle_{0,2}^{t, SQ}.
\end{equation*}

Summing all contributions gives
\begin{equation}
  \label{eq:explicitform}
  F_2^{SQ}(q) = \Cont_{\Gamma^{2}} + \Cont_{\Gamma^{1}} + \frac 12 \Cont_{\Gamma_{a}^{0}} + \frac 12 \Cont_{\Gamma_{b}^{0}} - c_{2, 0}.
\end{equation}

\subsection{Example}
\label{sec:geo:ex}

To illustrate our genus two formula, we compute explicitly the degree
one invariant, that is the coefficient of $q^1$ in $F_2^{SQ}$ defined
by \eqref{eq:explicitform}, and match it with the value given by
Conjecture~\ref{mainconj}, that is the $q^1$-coefficient of
\begin{equation*}
  F_2^{SQ}(q) = (I_0(q))^{-2} F_2^{GW}(Q)
  = -\frac 5{144} + \frac{325}{16} q + \frac{366875}{24} q^2 + \frac{1030721125}{48} q^3 + O(q^4).
\end{equation*}
Note that the value of the constant $c_{2, 0}$ is irrelevant for this
computation.
To compute the $q^1$-coefficients of the other terms on the right hand
side of \eqref{eq:explicitform}, we use localization for an additional
diagonal $(\bC^*)^5$-action on the base $\bP^4$ with corresponding
localization parameters $\lambda_i$.
We refer to \cite[Section~7]{MOP11} and \cite{GrPa99} for the
enumeration of fixed points and identification of localization
contributions that we use below.
It will be convenient for the computation to assume that
$\lambda_i = \xi^i \lambda$ where $\xi$ is a primitive fifth root of
unity.
We index the fixed points of $\bP^4$ by $i \in \{0, \dotsc, 4\}$.
Note that the Euler class of the Poincaré dual of fixed point $i$ is
given by $\prod_{j \neq i} (\lambda_i - \lambda_j)$.

\subsubsection{Genus two contribution}

We begin with the computation of $\Cont_{\Gamma^{2}}$ via
localization.
First, note that there are two types of fixed loci for localization on
$\oQ_{2, 0}(\bP^4, 1)$.

The first type of fixed locus is where, except for an order one base
point, the entire source curve is contracted to a fixed point $i$.
Such a fixed locus is isomorphic to $\M_{2, 1}$.
The contribution of the virtual class of this fixed locus is
\begin{equation}
  \label{eq:locg2vir}
  \frac{\prod_{j \neq i} ((\lambda_i - \lambda_j)^2 - \lambda_1 (\lambda_i - \lambda_j) + \lambda_2)}{\prod_{j \neq i} (\lambda_i - \lambda_j)(\lambda_i - \lambda_j - \psi_1)},
\end{equation}
where, by abuse of notation, $\lambda_1$ and $\lambda_2$ denote the
Chern classes of the Hodge bundle.
The contribution of the twisting by
$e(R\pi_* \mathcal L^{\otimes 5} \otimes [1])$ is
\begin{equation}
  \label{eq:locg2twist}
  \frac{(t + 5\lambda_i - 5\psi_1)\dotsb(t + 5\lambda_i - \psi_1)(t + 5\lambda_i)}{(t + 5\lambda_i)^2 - \lambda_1 (t + 5\lambda_i) + \lambda_2}.
\end{equation}
Summing the product of \eqref{eq:locg2vir} and \eqref{eq:locg2twist}
over all five fixed loci, taking the coefficient of $t^0$ and
integrating, gives the total resulting contribution of these 5 fixed
loci
\begin{multline*}
  \int_{\M_{2, 1}} 1370 \psi_1^4 - 3075 \psi_1^2 \lambda_1^2 + 2100 \psi_1 \lambda_1^3 + 75 \psi_1^2 \lambda_2 + 2925 \psi_1 \lambda_1 \lambda_2 \\
  = 1370 \cdot \frac 1{1152} - 3075 \cdot \frac 7{2880} + 2100 \cdot \frac 1{1440} + 75 \cdot \frac 7{5760} + 2925 \cdot \frac 1{2880}
  = -\frac{4285}{1152},
\end{multline*}
where we used a few well-known intersection numbers on $\M_{2, 1}$.

The second type of fixed locus corresponds to the locus of two genus
one components contracted to fixed points $i \neq j$, and which are
connected by a degree one cover of the torus fixed curve connecting
$i$ and $j$.
This fixed locus is isomorphic to $\M_{1, 1} \times \M_{1, 1}$ (up to
a $\mathbb Z/2\mathbb Z$-automorphism group that we will address
later).
The contribution of the virtual class to this fixed locus is
\begin{equation*}
  \prod_{k \neq i} \frac{\lambda_i - \lambda_k - \lambda_{1a}}{\lambda_i - \lambda_k} \prod_{k \neq j} \frac{\lambda_j - \lambda_k - \lambda_{1b}}{\lambda_j - \lambda_k}
  \frac 1{(\lambda_i - \lambda_j - \psi_{1a})(\lambda_j - \lambda_i - \psi_{1b})},
\end{equation*}
where $\psi_{1a}$, $\psi_{1b}$, $\lambda_{1a}$ and $\lambda_{1b}$
denote the cotangent and Hodge classes on each $\M_{1, 1}$-factor.
The contribution of the twisting is
\begin{equation*}
  \frac{(t + 5\lambda_i) \dotsb (t + 5\lambda_j)}{(t + 5\lambda_i - \lambda_{1a}) (t + 5\lambda_j - \lambda_{1b})}.
\end{equation*}
Summing up the contribution from all of the 20 fixed loci gives
\begin{equation*}
  \frac{2965}{288}.
\end{equation*}

\subsubsection{Genus one contribution}

We now compute $\Cont_{\Gamma^{1}}$.
There are also two types of fixed loci for $\oQ_{1, 1}(\bP^4, 1)$.

The first type of fixed locus is where, except for an order one base
point, the entire source curve is contracted to a fixed point $i$.
Such a fixed locus is isomorphic to $\M_{1, 2}$.
The contribution of the virtual class of this fixed locus is
\begin{equation*}
  \prod_{j \neq i} \frac{\lambda_i - \lambda_j - \lambda_1}{(\lambda_i - \lambda_j) (\lambda_i - \lambda_j - \psi_2)},
\end{equation*}
and the contribution of the twisting is
\begin{equation*}
  \frac{(t + 5\lambda_i - 5\psi_2) \dotsb (t + 5 \lambda_i - \psi_2)(t + 5\lambda_i)}{t + 5\lambda_i - \lambda_1}.
\end{equation*}
In addition, we need to consider the insertion.
For this, note that $\ev_1^*(H) = \lambda_i$, and that the descendent
$\psi$-class is $\psi_1$.
Thus, the insertion gives a factor of
\begin{equation*}
  \frac{-\frac 53 \lambda_i^3 + \frac 5{24} \lambda_i^4 t^{-1}}{(t - 5\lambda_i)(t - 5\lambda_i - \psi_1)}.
\end{equation*}
Summing over $i$ gives the contribution of
\begin{equation*}
  \frac{975}{64}.
\end{equation*}

The second type of fixed locus is where there is a genus one curve
contracted to a fixed point $i$ which is connected via a node to a
rational component mapping isomorphically to the fixed line connecting
fixed point $i$ to another fixed point $j$ such that the preimage of
fixed point $j$ is the marking.
This locus is isomorphic to $\M_{1, 1}$.
The contribution of the virtual class is
\begin{equation*}
  \frac{\prod_{k \neq i} (\lambda_i - \lambda_k - \lambda_1)}{\prod_{k \neq i} (\lambda_i - \lambda_k) \prod_{k \neq j} (\lambda_j - \lambda_k)}
  \frac 1{\lambda_i - \lambda_j - \psi_1},
\end{equation*}
and the contribution of the twisting is
\begin{equation*}
  \frac{(t + 5\lambda_i) (t + 4\lambda_i + \lambda_j) \dotsb (t + 5\lambda_j)}{t + 5\lambda_i - \lambda_1}.
\end{equation*}
Now, $\ev_1^*(H) = \lambda_j$, and the descendent $\psi$-class is
given by $\lambda_i - \lambda_j$.
So, the insertion gives a factor of
\begin{equation*}
  \frac{-\frac 53 \lambda_j^3 + \frac 5{24} \lambda_j^4 t^{-1}}{(t - \lambda_j)(t - 4\lambda_j - \lambda_i)}.
\end{equation*}
Summing over all $i \neq j$ gives
\begin{equation*}
  -\frac{3425}{288}.
\end{equation*}

\subsubsection{Genus zero contributions}

We finally compute $\Cont_{\Gamma^{0}_a}+\Cont_{\Gamma^{0}_b}$.
Note that we can combine the two insertions:
\begin{multline*}
  \left(-\frac 53 H^3 + \frac 5{24} H^4 t^{-1}\right)^{\otimes 2} + \frac 53 (H^3 \otimes H^4 + H^4 \otimes H^3) t^{-1} + \frac{65}8 H^4 \otimes H^4 t^{-2} \\
  = \frac{25}9 H^3 \otimes H^3 + \frac{95}{72} (H^3 \otimes H^4 + H^4 \otimes H^3) t^{-1} + \frac{4705}{576} H^4 \otimes H^4 t^{-2}
\end{multline*}

There are also two types of fixed loci for $\oQ_{0, 2}(\bP^4, 1)$.
The first type of fixed locus is where except for an order one base
point the entire source curve is contracted to a fixed point $i$.
Such a fixed locus is isomorphic to a point.

The contribution of the virtual class for this fixed locus is given by
\begin{equation*}
  \frac 1{\prod_{j \neq i} (\lambda_i - \lambda_j)^2},
\end{equation*}
and the contribution of the twisting is
\begin{equation*}
  (t + 5\lambda_i)^6.
\end{equation*}
On this fixed locus, the descendent classes vanish, and we have
$\ev_1^*(H) = \ev_2^*(H) = \lambda_i$.
Thus, the insertion gives a factor of
\begin{equation*}
  \frac{\frac{25}9 \lambda_i^6 + \frac{95}{36} \lambda_i^7 t^{-1} + \frac{4705}{576} \lambda_i^8 t^{-2}}{(t - 5\lambda_i)^4}
\end{equation*}
In total, the contribution of the fixed loci of first type is
\begin{equation*}
  \frac{1967}{192}.
\end{equation*}

A fixed locus of second type is where the map is a degree one cover of
a fixed line of $\bP^4$ such that marking 1 is mapped to fixed point
$i$ and marking 2 is mapped to fixed point $j$.
Such a locus is again just a point.
The contribution of the virtual class is then given by
\begin{equation*}
  \frac 1{\prod_{k \neq i} (\lambda_i - \lambda_k) \prod_{k \neq j} (\lambda_j - \lambda_k)},
\end{equation*}
and the contribution of the twisting is
\begin{equation*}
  (t + 5\lambda_i) (t + 4\lambda_i + \lambda_j) \dotsb (t + 5\lambda_j).
\end{equation*}
We have $\ev_1^*(H) = \lambda_i, \ev_2^*(H) = \lambda_j$, the
descendent class at marking one is $\lambda_j - \lambda_i$, while the
other descendent class is $\lambda_i - \lambda_j$.
So, the insertion gives a factor of
\begin{equation*}
  \frac{\frac{25}9 \lambda_i^3 \lambda_j^3 + \frac{95}{72} \lambda_i^3 \lambda_j^4 t^{-1} + \frac{95}{72} \lambda_i^4 \lambda_j^3 t^{-1} + \frac{4705}{576} \lambda_i^4 \lambda_j^4 t^{-2}}{(t - 5\lambda_i) (t - 5\lambda_j) (t - 4\lambda_i - \lambda_j) (t - \lambda_i - 4\lambda_j)}
\end{equation*}
Thus, the total contribution is
\begin{equation*}
  \frac{3001}{144}.
\end{equation*}

\subsubsection{Final result}

Collecting all the contributions gives
\begin{equation*}
  -\frac{4285}{1152} + \frac 12 \frac{2965}{288} + \frac{975}{64} - \frac{3425}{288} + \frac 12 \left(\frac{1967}{192} + \frac{3001}{144}\right) = \frac{325}{16},
\end{equation*}
which is the expected coefficient of $q^1$ in $F_2^{SQ}(q)$.

\section{Proof of the Main Theorem} \label{sec:twist}

In this section, we provide a list of closed formulae for the
contribution of each graph in Section~\ref{sec:geo:g2}.
Based on these closed formulae, we prove the main theorem.
One subtlety are the extra generators appearing in the twisted theory.
They mysteriously cancel each other when we sum up the
contributions. We will come back to these cancellations in higher
genus in \cite{GJR}.
The proof of these formulae will be presented in
Section~\ref{proofofproposition}.

\subsection{Genus zero mirror theorem for the twisted theory of $\mathbb P^4$}
Let $I(\t,q,z)$ be the $I$-function of the twisted invariants, that is explicitly
\begin{equation}\label{Itwist}
I(\t,q,z) =  z \, \sum_{d\geq 0 } q^d  \frac{\prod_{j=1}^{5d}  (5H + jz -\t)}{ \prod_{k=1}^d (H+k z)^5 },
\end{equation}
which satisfies the following Picard--Fuchs equation
\begin{equation} \label{PFequation}
\Big( D_H^5 -  q \prod_{k=1}^5 (5D_H+kz-\t) \Big)  I(\t,q,z) = 0,
\end{equation}
where $D_H := D + H := z q\frac{d}{dq} + H$.
The $I$-function has the following form when expanded in $z^{-1}$:
$$
I (\t,q,z)  = z I_0(q) \mathbf 1 + I_1(\t,q)  +z^{-1} I_2(\t,q) + z^{-2} I_3(\t,q)+\cdots
$$

The genus zero mirror theorem \cite{Gi96} relates $I(t, q, z)$ to the
$J$-function, defined by
\begin{equation*}
  J(\bt,z):= -\mathbf 1 z+ \bt(-z) + \sum_i \varphi^i \left<\left< \frac{\varphi_i}{z-\psi}\right>\right>^t_{0,1}(\bt (\psi)),
\end{equation*}
where we define the double bracket for the twisted Gromov--Witten
invariants by
\begin{align*}
  &\left<\left< \gamma_1(\psi),\cdots,\gamma_m(\psi) \right>\right>_{g,m}^t(\bt(\psi)) \\
  =\,& \sum_{n} \frac{1}{n!}\left<\gamma_1(\psi),\cdots,\gamma_m(\psi) ,\bt(\psi),\cdots, \bt(\psi) \right>^t_{g,m+n} \\
  =\,& \sum_{n,d} \frac{q^d}{n!} \int_{[\M_{g,m+n}(\bP^4,d)]^\vir}  e(R^* \pi_*  f^* \cO(5) \otimes [1]) \prod_{j=1}^m \gamma_j(\psi_j)  \prod_{k=m+1}^{m+n} \bt(\psi_k) ,
\end{align*}
where we recall that $[1]$ is a line bundle with first Chern class
$t$, and where $\{\varphi_i\}$ is any basis of $H^*(\bP^4)$ with dual
basis $\{\varphi^i\}$ under the inner product
$$
(a,b)^{\t} : = \int_{\bP^4} a\cup b\cup (5H-\t)  .
$$

To state the precise relationship, we write
$$
I_1(\t,q) = I_{1}(q) H + I_{1;a}(q) \t,
$$
and define the mirror map by
$$
\tau(q)  = \frac{I_1(\t, q)}{I_0(q)} = H   \frac{I_1(q)}{I_0(q)} + \t \frac{I_{1;a}(q)}{I_0(q)}.
$$
Then Givental's mirror theorem states that the $J$-function of the
twisted invariants can be computed from the $I$-function by
$$
J(\tau(q),z)   =\frac{I(\t,q,z)}{I_0(q)}.
$$

\subsection{Quantum product and ``extra'' generators}

We consider the quantum product in the twisted theory, which is defined for any point $\bt \in H_{\mathbb C^*} ^*(\bP^4)$ by
$$
a *_{\bt} b :=  \sum_{i} \varphi^i  \left<\left< a,b, \varphi_i \right>\right>^{\t}_{0,3}(\bt)
= \sum_{i}  \varphi^i \left< a,b, \varphi_i  \right>^{\t}_{0,3}   .
$$
In the basis $\{H^k\}$, the quantum product $\dot\tau *_\tau$,
in which
\begin{equation*}
  \dot\tau  = H + q \frac d{dq} \tau,
\end{equation*}
can be identified with a $5 \times 5$-matrix. It is not hard to see
that it has the following form
\begin{equation*}
  \dot\tau *_\tau = A: = \begin{pmatrix}
    I_{1,1;a}\t  & I_{1,1} \\
    I_{2,2;b}\t^2 & I_{2,2;a}\t  & I_{2,2} \\
    I_{3,3;c}\t^3 & I_{3,3;b}\t^2 & I_{3,3;a}\t & I_{3,3} \\
    I_{4,4;d}\t^4 & I_{4,4;c}\t^3 & I_{4,4;b}\t^2 & I_{4,4;a}\t  & I_{4,4} \\
    I_{5,5;e}\t^5 & I_{5,5;d}\t^4 & I_{5,5;c}\t^3 & I_{5,5;b}\t^2 &
    I_{5,5;a}\t
   \end{pmatrix}^t,
\end{equation*}
where the $I_{i, i}$ and $I_{i, i; *}$ are certain power series in
$q$.
Recall that by the basic theory of Frobenius manifolds, the $S$-matrix
$S(\bt,z)$, which is defined by
$$
S(\bt,z) \varphi^i = \varphi^i + \sum_j \varphi^j \,\left<\left<  \varphi_j, \frac{\varphi_i}{z-\psi} \right>\right>^t (\bt)  ,
$$
is a solution of the quantum differential equation
\begin{equation*}
  d S(\bt,z)  = d\bt *_{\bt}  S(\bt,z) ,
\end{equation*}
where the differential $d$ acts on the coordinates
$\bt \in H_{\mathbb C^*} ^*(\bP^4)$ (but not on the Novikov variable
$q$).
By using the divisor equation and the matrix introduced above, we can
write down the explicit form of the quantum differential equation at
$\bt = \tau(q)$:
\begin{equation*}
  D_H S(\tau(q),z) = A(t,q) \cdot S(\tau(q),z)
\end{equation*}
Since the $S$-matrix can be obtained from the derivatives of the
$I$-function by Birkhoff factorization (see e.g. \cite{CoGi07} and see
also Proposition~\ref{birkhoff} below), we see that all the entries in
this matrix $A$ can be written explicitly in terms of the derivatives
of $I_k$.
In particular, we have
\begin{equation*}
  I_{1,1} = 1+ q\frac{d}{dq} \left(\frac{I_{1}}{I_0} \right),\quad
  I_{1,1;a} = q\frac{d}{dq} \left(\frac{I_{1,a}}{I_0} \right) .
\end{equation*}
\begin{remark}
  One can check that our $I_{p,p}$ coincide with the $I_p$ in Theorem
  1 of Zagier--Zinger's paper \cite{ZZ08}.
\end{remark}

Recall that we have introduced the following degree $k$ ``basic'' generators
\begin{equation*}
  \X_k := \frac{d^k}{du^k} \left(\log \frac{I_0} L\right),\quad
  \Y_k:= \frac{d^k}{du^k} \left(\log \frac{I_0  I_{1,1}}{ L^2}\right),\quad
  \Z_k:=  \frac{d^k}{du^k} \left( \log (q^{\frac{1}{5}}L)\right),
\end{equation*}
By using the entries in the quantum product matrix, we define the following four ``extra'' generators
\begin{align*}
\QQ = \frac{1}{L}\Big(I_{1,1;a}-\frac{1}{5}\Big),\quad
\PP = \frac{I_{1,1}I_{2,2;b}}{L^2}+  \QQ^2+ \frac{ L^4}{2} \QQ, %\quad D \PP:= \frac{d}{du} \PP,\quad D \QQ:= \frac{d}{du} \QQ .
\\
\tilde \QQ:=  { \frac{d}{du} \QQ}+(\X-\Y)\QQ  ,\quad \tilde \PP:=   {  \frac{d}{du} \PP}+(\X+\Y)\PP  .
\end{align*}
Their degrees are defined by
$$
\deg \QQ := 1,\quad \deg \PP:=2 ,\quad \deg \frac{d}{du}  :=1,
$$
so that we have
$$
\deg \tilde \QQ = 2,\quad \deg \tilde \PP= 3.
$$
\begin{remark}
We will see that, in the genus $2$ case, only linear terms of the following two extra generators are involved: $\tilde \PP$ and $\tilde \QQ$  .
\end{remark}
\begin{remark}
  \label{Zkpolynomial}
  For any $k$, the generator $\Z_k$ can be written as a
  (non-homogeneous) polynomial of $L$.
  This is because $\Z_1$ is a polynomial of $L$ and
  \begin{equation*}
    \frac{d}{du} L =  \frac{1}{5}(L^5-1) .
  \end{equation*}
  For example, the first several of them are
  \begin{align*}
    \Z_1 =\, & \frac{1}{L}(q\frac{d}{dq}(\log L) +\frac{1}{5})  =\frac{L^4}{5 } ,\\
    \Z_2 = \,& \frac{4}{25 } L^3 (L^5-1)  ,\\
    \Z_3 =  \,&  \frac{4}{125}L^2 ( L^5-1) (8 L^5-3).
  \end{align*}
\end{remark}

\begin{remark} \label{numer}
  Some leading terms of the basic generators are given by
  \begin{align*}
    \X = & -505 q-1425100 q^2-4155623250 q^3 + O(q^4)\\
    \Y = & -360 q-1190450 q^2-3759611500 q^3 + O(q^4)\\
    \X_2 = & -505 q-2534575 q^2-10290963500 q^3 + O(q^4)\\
    \X_3 = & -505 q-4753525 q^2-27310140500 q^3 + O(q^4)\\
    L = & \,\, 1+625 q+1171875 q^2+2685546875 q^3 + O(q^4),
  \end{align*}
  and some leading terms of the extra generators are given by
  \begin{align*}
    \QQ = & - \frac{1}{5}-149 q-271030 q^2-591997100 q^3 + O(q^4)\\
    \PP = &  -\frac{3}{50}-\frac{399}{10} q-12732 q^2+131454705 q^3 + O(q^4)\\
    \tilde \QQ= & -120 q-473525 q^2-1622526750 q^3 + O(q^4)\\
    \tilde \PP= & \,\, 12 q+\frac{331965}{2} q^2+984651825 q^3 + O(q^4) .
  \end{align*}
  In particular, from this data one can see the degree zero genus two
  Gromov--Witten invariant should be equal to the coefficient of the
  $\Z^3$ in the homogenous polynomial $5^3 \frac{L^2}{I_0}F_2 $ (see
  Conjecture~\ref{mainconj}), which is $\frac{-5}{144}$.
\end{remark}

\subsection{A list of closed formulae for the contribution of localization graphs}

We now give a list of closed formulae for the contribution of each of
the localization graphs in Section~\ref{sec:geo:g2}.
We also rewrite each contribution in terms of Gromov--Witten double
brackets using the wall-crossing formula\footnote{While no proof
  exactly applies to our situation, the proofs \cite{CFK17},
  \cite{CFK} and \cite{CJR2} can be easily adapted.}:
\begin{equation*}
  \langle\rangle_{g, n}^{t, SQ}
  = I_0^{2g - 2} \langle\langle \rangle\rangle_{g, n}^{\t} (\tau(q))
\end{equation*}

The first proposition concerns the purely twisted contribution
$\Cont_{\Gamma^2} = L^{-2} \Cont'_{\Gamma^2}$, where
\begin{equation*}
  \Cont'_{\Gamma^2} := (L/I_0)^2 \cdot \left<\left<   \right>\right>_{2,0}^{\t} (\tau(q)).
\end{equation*}
\begin{proposition}
  \label{g2cont}
  The contribution of $\Gamma^{2}$ is a degree $3$ homogeneous
  polynomial in the basic and extra generators, to be precise
  {\small
  \begin{align*}
\Cont'_{\Gamma^2} = &  \,
-{\frac {5}{1152}}\tilde \PP -\big({\frac {31\,\X}{576}}+\frac{1}{48}\,\Y +{\frac {205\,\Z_{{1}}}{2304}} \big) \cdot \tilde  \QQ  \\
&     -{\frac {19\,\X_{{3}}+67\,{\X}^{3}}{1152}}-\frac{ \Y
(\X_{{2}}+5\,{\Y} \X)}{48}-{\frac {25\,\X_{{2}}(\X+\Z_1)}{288}} -
{\frac {101\,\Y{\X}^{2}}{1152}}-\frac{{\Y}^{3}}{24}-{\frac {107\,\Z_{{1}}{\X}^{2}}{384}} \\
&-{\frac {19\,\Z_{{1}}\Y\X}{64}}-\frac{3\Z_
{{1}}{\Y}^{2}}{16}-{\frac {65\,\Z_{{2
}}\X}{1536}}-{\frac {715\,{\Z_{{1}}}^{2}\X}{1152}} -{\frac {45\,{\Z_{{1}}}^{2}\Y}{128}}-{\frac {829\,\Z_{{1}}\Z_{
{2}}}{1728}}+{\frac {349\,\Z_{{3}}}{13824}}
  \end{align*}}
\end{proposition}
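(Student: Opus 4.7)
The plan is to use Givental--Teleman reconstruction for the twisted theory, which is semisimple away from the small discriminant locus, to express the empty genus-two correlator as a sum over stable graphs whose vertex and edge contributions are determined by the $S$- and $R$-matrices. The left-hand side $\Cont'_{\Gamma^2} = (L/I_0)^2 \langle\langle\rangle\rangle^{\t}_{2,0}(\tau(q))$ is obtained from $\Cont_{\Gamma^2}=\langle\rangle^{t,SQ}_{2,0}$ by the wall-crossing identity $\langle\rangle^{t,SQ}_{g,n}=I_0^{2g-2}\langle\langle\rangle\rangle^{\t}_{g,n}(\tau(q))$, so the task reduces to writing the genus-two empty twisted potential in closed form at the mirror point $\tau(q)$.

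First, I would use the mirror theorem together with Birkhoff factorization to compute the fundamental solution $S(\tau(q),z)$ in terms of derivatives of the components $I_k$ of the $I$-function, and then read off the quantum product $A$ at $\tau(q)$ from the quantum differential equation $D_H S = A\cdot S$. The entries $I_{p,p}$ and $I_{p,p;\ast}$ of $A$ are precisely what enter the basic generators $\X_k, \Y_k, \Z_k$ and the extra generators $\QQ, \PP$. Diagonalizing $A$ gives canonical coordinates and normalized idempotents; from the condition that $R(z)$ conjugates the quantum connection to its semisimple (Frobenius) form and that it is symplectic with $R(0)=\mathrm{Id}$, one solves recursively for the $R$-matrix coefficients order by order in $z$. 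Crucially, only a finite truncation of $R$ (through order $z^3$ at most) is needed for the genus-two computation.

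Next, I would apply Givental's graph-sum formula for the genus-two empty potential: it is a sum over the finitely many stable graphs $\gamma$ of genus two with no legs (one vertex of genus two; one genus-one vertex with self-loop; two genus-one vertices joined by an edge; and the genus-zero cases with two loops or suitable bridges), where each vertex carries an integral of $\psi$-classes against a local twisted vertex operator built from $R$, and each edge carries a propagator given by the bivector expansion of $R$. These vertex integrals reduce via the projection formula and the dilaton/string equations to standard Hodge integrals on $\bar M_{g,n}$, which are completely explicit and of the form $\int \psi^a \lambda_1^b\lambda_2^c$.

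The main obstacle is combinatorial and algebraic: after expanding every term in terms of $I_0$, $L$, and the quantum product entries, one obtains a huge a priori inhomogeneous rational expression, and one must show it collapses to the stated homogeneous degree-three polynomial in $\X, \X_2, \X_3, \Y, \Z_1, \Z_2, \Z_3, \tilde\QQ, \tilde\PP$. I would handle this by exploiting the Picard--Fuchs equation \eqref{PFequation} and the defining relations $I_{1,1}=1+q\frac{d}{dq}\tau_Q$, $D_H L = \tfrac15(L^5-1) L/z$, etc., to systematically eliminate higher derivatives of $I_0$ in favor of the generators, and by using the definitions of $\tilde\QQ$ and $\tilde\PP$ to absorb all non-polynomial pieces involving $\QQ$ and $\PP$. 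The degree-three homogeneity (under $\deg \X_k=\deg\Y_k=\deg\Z_k=k$, $\deg\QQ=1$, $\deg\PP=2$, $\deg\frac d{du}=1$) provides a strong sanity check at every stage, and matching finitely many coefficients in the $q$-expansion fixes the rational coefficients that appear in the stated formula.
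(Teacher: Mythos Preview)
Your approach is essentially the paper's: Givental--Teleman reconstruction at the mirror point, summing over the seven genus-two stable graphs with vertex contributions given by $\psi$-integrals against the $T$-translated topological field theory and edge contributions from the $R$-matrix, followed by rewriting everything in the stated generators. Two technical points deserve attention, however, as the paper handles them carefully and your sketch does not.

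First, solving the quantum differential equation for $R(z)$ order by order leaves an undetermined integration constant at each step (Lemma~\ref{asymptoticofI}); symplecticity and $R(0)=\mathrm{Id}$ alone do not fix these. The paper resolves this by passing to the fully $(\bC^*)^5$-equivariant twisted theory of $\bP^4$, which remains semisimple at $q=0$, fixing the constants there via the known $q=0$ value of $R$ in terms of Bernoulli numbers (equation~\eqref{constantR}), and then specializing $\lambda\to 0$. Without this, your recursive determination of $R$ is incomplete.

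Second, the vertex integrals in the Givental--Teleman formula are pure $\psi$-class integrals on $\M_{g,n}$ (the underlying CohFT is the trivial one, deformed only by the $T$-translation), not Hodge integrals with $\lambda$-classes. Also be aware that the individual contributions $\Cont'_{\Gamma^2_5}$, $\Cont'_{\Gamma^2_6}$, $\Cont'_{\Gamma^2_7}$ are genuinely \emph{rational} in the generators (each contains a term $\mathcal E/(L^5-1)$); only their weighted sum is polynomial. So the ``collapse to a homogeneous polynomial'' you anticipate does not happen graph-by-graph, and you should not expect the Picard--Fuchs relations alone to force polynomiality before summing.
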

The leading terms of the genus two graph contribution are
\begin{align*}
  \Cont_{\Gamma^{2}} = L^{-2}\Cont'_{\Gamma^2}  ={\frac {1645\,q}{1152}}+{\frac {1842665\,{q}^{2}}{576}}+{\frac {
2419134175\,{q}^{3}}{288}}
 +O(q^4).
\end{align*}
Note that the degree $1$ term coincides with the one in the localization computation in Section~\ref{sec:geo:ex}:
$$
\frac {1645}{1152} = -\frac{4285}{1152}+\frac{1}{2}\cdot\frac{ 2965}{288} .
$$

We next consider the contribution from the graph $\Gamma^1$, which is
a genus-one twisted theory with a special insertion.
It can be rewritten as $\Cont_{\Gamma^1} = L^{-2} \Cont'_{\Gamma^1}$,
where
\begin{equation*}
  \Cont'_{\Gamma^{1}} := \frac{L^2}{I_0} \left<\left<  \frac{-\frac{5}{3}H^3+\frac{5}{24}H^4 \t^{-1}}{(\t-5H)(\t-5H-\psi)} \right>\right>_{1,1}^{\t} (\tau(q)).
\end{equation*}
\begin{proposition}
  \label{g1cont}
  The contribution of $\Gamma^{1}$ is a degree $3$ homogeneous
  polynomial in the basic and extra generators, to be precise
{\small
\begin{align*}
-\Cont'_{\Gamma^1} = & \,\,\,\,
{\frac {473 }{576}} \tilde \PP +\big(\frac{1}{48}\Y-{\frac {25}{96}\X}+{\frac {2093}{1152}\,\Z_{{1}
}}\big) \cdot \tilde \QQ   \\
&
+{\frac {25\,(\X_{{3}}+\X^3)}{72}}+{
\frac {41\,\Y(\X_{{2}}-{\Y}\X-{\Y}\Z_1+\Z_2)}{48}}+{\frac {1871\,\X\X_{{2}}- {1271\,\Y{\X}^{2}}-{  {1471\,\Z_{{1}}{
\X}^{2}} }  }{576}}\\& +{\frac {1025\,\Z_{{1}}\X_{{2}}}{288}}  -{\frac {451\,\Z_{{1}}\Y\X}{72}}+
{\frac {1267\,\Z_{{2}}\X}{1152}}+{\frac {1039\,{\Z_{{1}}}^{2}\X}{576}}  -{\frac {779
\,{\Z_{{1}}}^{2}\Y}{192}}+{\frac {4945\,\Z_{{1}}\Z_{{2}}}{864}}-{\frac {
155\,\Z_{{3}}}{3456}}
\end{align*}}
\end{proposition}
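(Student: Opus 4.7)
The bracket defining $\Cont'_{\Gamma^{1}}$ is a twisted genus-one, one-point invariant of $\bP^{4}$ whose insertion is a descendent expression in $H$ and $\psi$. My plan is to apply Givental's formalism in three steps: expand the insertion into pure $\psi$-descendents, reduce the resulting one-point functions to data encoded by the $S$- and $R$-matrices of the twisted theory, and then invoke the Givental--Teleman one-loop formula to obtain an expression in terms of the quantum product matrix $A$.

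The first step is a geometric series expansion,
\begin{equation*}
\frac{1}{(\t-5H)(\t-5H-\psi)}=\sum_{k\ge 0}\frac{\psi^{k}}{(\t-5H)^{k+2}},
\end{equation*}
which rewrites $\Cont'_{\Gamma^{1}}$ as a formal sum of genus-one one-point descendent invariants with insertions of the form $H^{a}\psi^{k}$ and rational coefficients in $\t$. Only finitely many terms contribute, by $H^{5}=0$ on $\bP^{4}$ and by dimension.

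The second step is to use the Birkhoff factorization of $I(\t,q,z)$ (Proposition~\ref{birkhoff}) to identify the $S$-matrix entries with polynomial combinations of the components $I_{k}(\t,q)$. Together with the string, dilaton, and divisor equations for the twisted theory, this reduces every genus-zero descendent two-point function, and via topological recursion every genus-one descendent one-point function, to $S$-matrix data. The Givental--Teleman one-loop formula for the semisimple twisted theory then expresses $\langle\langle\phi\rangle\rangle_{1,1}^{\t}(\tau(q))$ as a universal polynomial in the $R$-matrix and the Hodge class $\lambda_{1}$ on $\M_{1,1}$. The $R$-matrix is in turn determined by the quantum product matrix $A$ from the previous subsection, whose entries $I_{k,k}$ and $I_{k,k;*}$ can be written in the generators $\X_{k},\Y_{k},\Z_{k},\QQ,\PP,\tilde\QQ,\tilde\PP$ using the Picard--Fuchs equation~\eqref{PFequation} and iterated application of $d/du$. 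Homogeneity of the final answer in the grading $\deg\X_{k}=\deg\Y_{k}=\deg\Z_{k}=k$, $\deg\QQ=1$, $\deg\PP=\deg\tilde\QQ=2$, $\deg\tilde\PP=3$ is then automatic, since the one-loop formula adds a single $d/du$ on top of the genus-zero input while the descendent level soaks up the remaining degree; the content of the proposition therefore lies entirely in the precise rational coefficients, which I would extract by a careful but mechanical combinatorial unwinding.

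The principal obstacle is that $\t$ cannot be specialized to zero as in Zagier--Zinger: consequently the off-diagonal entries of $A$ proportional to $\t,\t^{2},\dotsc$ do not vanish, and the $R$-matrix expansion mixes these contributions into the final answer. This is precisely the mechanism by which $\tilde\QQ$ and $\tilde\PP$ appear, and tracking their coefficients --- rather than just the pure $\X,\Y,\Z$ monomials as would suffice at $\t=0$ --- is the combinatorially delicate part. A strong consistency check is that the $q^{1}$-coefficient of $\Cont_{\Gamma^{1}}=L^{-2}\Cont'_{\Gamma^{1}}$ must reproduce the value $-3425/288+975/64$ found by direct localization in Section~\ref{sec:geo:ex}.
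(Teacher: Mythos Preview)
Your overall framework is correct---the computation does go through Givental--Teleman for the semisimple twisted theory, and the final answer is indeed built from $R$-matrix data expressible in the generators. But the proposal skips the step that actually makes the calculation finite and tractable, and your justification for finiteness in step~1 is not right.

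Expanding the descendent insertion as a geometric series in $\psi$ does \emph{not} give a finite sum: the descendent one-point functions $\langle\langle H^{a}\psi^{k}\rangle\rangle_{1,1}^{t}$ do not vanish for large $k$ (there is no dimension bound once you sum over all degrees $d$, and $H^{5}=0$ only bounds $a$, not $k$). The paper handles this by first converting descendents to ancestors via the Kontsevich--Manin/Givental relation
\[
\Big\langle\!\Big\langle \frac{\alpha}{t-5H-\psi}\Big\rangle\!\Big\rangle_{1,1}^{t}
=\Big\langle\!\Big\langle \frac{\mS(t)\alpha}{t-5H-\bar\psi}\Big\rangle\!\Big\rangle_{1,1}^{t},
\]
where $\mS(t)=S^{*}(z)|_{z=t-5H}$ is the modified $S$-matrix and $\bar\psi$ is the ancestor class pulled back from $\M_{1,1}$. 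Since $\dim\M_{1,1}=1$, only the $\bar\psi^{0}$ and $\bar\psi^{1}$ terms survive, so the problem becomes finite at this point---not earlier. Your ``topological recursion'' and ``string/dilaton/divisor'' route does not substitute for this step.

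After passing to ancestors, the paper applies the Givental--Teleman stable-graph sum on $\M_{1,1}$, which here has exactly two graphs: the genus-one vertex $\Gamma_{a}^{1}$ and the genus-zero loop $\Gamma_{b}^{1}$. Each contribution is then written in terms of the explicit $R$-matrix entries ${(R_{1})_{0}}^{\alpha}$, $(R_{1})_{\bar\alpha\bar\alpha}$, and $(R_{1}^{*}e_{\alpha},\sum e^{\beta})^{t}$ computed in Section~\ref{sec:Rmatrix}, together with the explicit entries of $\mS(t)$ applied to the insertion. The individual contributions $\frac{L^{2}}{I_{0}}\Cont_{\Gamma_{a}^{1}}$ and $\frac{L^{2}}{I_{0}}\Cont_{\Gamma_{b}^{1}}$ are each degree-$3$ homogeneous polynomials in the generators, and their sum gives the stated formula. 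Your claim that homogeneity is ``automatic'' from a one-loop degree count is too glib: it comes out of the explicit form of the $R$-matrix entries (Proposition~\ref{prop:R1asym} and the inductive formulae of Section~\ref{sec:Rmatrix}) combined with the relations of Lemmas~\ref{Ikrelationsex} and~\ref{diffequationsforPQ}, and one genuinely has to check that no inhomogeneous residue survives.
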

The leading terms of the genus one graph contribution are
\begin{align*}
 - \Cont_{\Gamma^{1}} = -L^{-2}\Cont'_{\Gamma^1}  ={\frac {1925\,q}{576}}-{\frac {2344025\,{q}^{2}}{288}}-{\frac {
4831529575\,{q}^{3}}{144}}
 +O(q^4).
\end{align*}
Note that the degree $1$ term coincides with the one in the localization computation in Section~\ref{sec:geo:ex}:
$$
\frac{1925}{576} = \frac{975}{64}-\frac{3425}{288}
$$

The remaining two (non-trivial) graphs involve a genus-zero
two-pointed twisted theory.
We rewrite them as $\Cont_{\Gamma^0_a} = L^{-2} \Cont'_{\Gamma^0_a}$ and $\Cont_{\Gamma^0_b} = L^{-2} \Cont'_{\Gamma^0_b}$,
where
\begin{align*}
  \Cont'_{\Gamma_{a}^{0}}:= &  {L^2}  \left<\left<  \frac{ \frac{5}{3}H^3 \otimes H^4 \t^{-1}+\frac{5}{3} H^4 \otimes H^3 \t^{-1} +\frac{65}{8} H^4 \otimes H^4 \t^{-2} }{(\t-5H)(\t-5H-\psi_1) \, (\t-5H)(\t-5H-\psi_2)} \right>\right>_{0,2}^{\t}(\tau(q)),
  \\
  \Cont'_{\Gamma_{b}^{0}}:= &  {L^2} \left<\left<  \frac{-\frac{5}{3}H^3+\frac{5}{24}H^4 \t^{-1}}{(\t-5H)(\t-5H-\psi_1)},
\frac{-\frac{5}{3}H^3+\frac{5}{24}H^4 \t^{-1}}{(\t-5H)(\t-5H-\psi_2)} \right>\right>_{0,2}^{\t}(\tau(q)).
\end{align*}

\begin{proposition}
  \label{g0contp}
  The contributions of $\Gamma_{a}^{0}$ and $\Gamma_{b}^{0}$ are both
  degree $3$ homogeneous polynomials in the basic and extra generators, to
  be precise
  {\small
  \begin{align*}
    \Cont'_{\Gamma_{a}^{0}} =&-{\frac {13 }{8}}  \tilde \PP -\big(\frac{1}{12}\X +{\frac {199 }{48}\Z_{{1}}}\big)\cdot  \tilde \QQ\\
                            &
                              +{\frac {41}{24}}({\X}^{3} +{\X}^{2}\Y +{\Z_{{1}}}^{2}\Y) +{\frac {77\,\Z_{{1}}{\X}^{2}}{24}}+{\frac {
                              41\,\Z_{{1}}\Y\X}{12}}\\
                            & +{\frac {677\,\Z_{{2}}\X}{96}}-{\frac {277\,{\Z_{{1}}}
                              ^{2}\X}{24}} -{\frac {599\,\Z_{{1}}\Z_{{2
                              }}}{72}}+{\frac {805\,\Z_{{3}}}{288}},\\
    \Cont'_{\Gamma_{b}^{0}} = &-{\frac {5}{576}}  \tilde \PP  +
                               \big({\frac {205\X }{288}}+{\frac {265\Z_{{1}}}{384}} \big)\cdot  \tilde \QQ\\
                            &+{\frac {7595}{576}\,(\X_{{3}}+{\X}^{3})}
                              +{\frac {15595\,\X\X_{{2}}}{288}} -{\frac {8405\,\Y }{576}(\X^2+2\X\Z_{{1}}+\Z_{{1}}^{2})}
    \\&+\big({\frac {250\X_{{2}}}{9}}+{\frac {215
        {\X}^{2}}{576}}\big)\Z_{{1}} +{\frac {117215
        \,\Z_{{2}}\X}{2304}}-{\frac {2405\,{\Z_{{1}}}^{2}\X}{192}} +{\frac {1585\,\Z_{{1}}\Z_{{2}}}{64}}+{\frac {30325
        \,\Z_{{3}}}{2304}} .
  \end{align*}}
\end{proposition}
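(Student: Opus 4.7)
My plan is to reduce both $\Cont'_{\Gamma^0_a}$ and $\Cont'_{\Gamma^0_b}$ to an evaluation of entries of the twisted $S$-matrix via the standard genus-zero two-point reconstruction formula, which, for any classes $\alpha, \beta \in H^*_{\bC^*}(\bP^4)$, reads
\begin{equation*}
\frac{(\alpha, \beta)^{\t}}{z_1 + z_2} + \sum_a \left<\left< \frac{\varphi^a \cup \alpha}{z_1 - \psi_1}, \frac{\varphi_a \cup \beta}{z_2 - \psi_2}\right>\right>_{0,2}^{\t}(\tau) = \frac{(S(\tau, z_1) \alpha, S(\tau, z_2) \beta)^{\t}}{z_1 + z_2}.
\end{equation*}
Each insertion in $\Cont'_{\Gamma^0_a}$, $\Cont'_{\Gamma^0_b}$ has the form $\alpha/((\t - 5H)(\t - 5H - \psi_i))$ with $\alpha$ a polynomial in $H$ and $\t^{-1}$; absorbing the first factor into the class via a geometric series expansion turns each leg into the form $\alpha'/(z - \psi_i)$ with $z = \t - 5H$, to which the identity above applies directly.

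The problem then becomes the evaluation of specific components of $S(\tau, \t - 5H)$. These entries are accessible from the $I$-function in two complementary ways: via Birkhoff factorization of the normalized derivatives $D_H^k(I/I_0)$, which expresses the columns of $S$ explicitly in terms of $I_0, I_1, I_{1,1}, I_{1,1;a}, \ldots$; and via the quantum differential equation $D_H S = A \cdot S$, which relates successive derivatives of $S$-entries directly to the matrix $A$ encoding the $I_{k,k}$ and $I_{k,k;*}$. Using the Picard--Fuchs equation \eqref{PFequation} to close the $D_H$-recursion after five steps, every needed entry of $S(\tau, z)$ can be rewritten as a rational function in the basic and extra generators, $\t$, and $z$.

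It then remains to plug the $S$-matrix expansion into the two-point identity, specialize $z_1 = z_2 = \t - 5H$, extract the $\t^0$-coefficient of the resulting Laurent series in $\t$, and identify the output with the claimed polynomials. The degree-three homogeneity is forced by the grading in which $\deg(d/du) = 1$, $\deg \X_k = \deg \Y_k = \deg \Z_k = k$, $\deg \QQ = 1$, $\deg \PP = 2$; this matches the virtual-dimension count on the moduli side and so need not be imposed by hand. The $\tilde \QQ$ and $\tilde \PP$ terms appear naturally when $D_H$ is applied to $S$-matrix entries containing $\QQ$ or $\PP$, since by the quantum connection each such application introduces the $(\X \pm \Y)$-corrections built into their definitions.

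\textbf{Main obstacle.} The delicate aspect is verifying that the coefficients of $\tilde \PP$ and $\tilde \QQ$ emerge exactly as stated, with no residual $\QQ^2$, $\QQ\PP$, or other nonlinear extra-generator terms. This is a cancellation between the $\t^{-1}$- and $\t^{-2}$-corrections in the insertions and the subleading terms in the $z^{-1}$-expansion of $S$. A clean way to organize it is to handle $\Cont'_{\Gamma^0_b}$ first, where the insertion is a tensor square of a single class, and then exploit the algebraic similarity of the $\Gamma^0_a$ integrand --- which contains the symmetric cross term $H^3 \otimes H^4 + H^4 \otimes H^3$ --- to reuse most of the computation. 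The heavy book-keeping of $\t$-powers and of $D_H$-reductions via the PF equation is the true obstacle; conceptually, all the needed input is already in place.
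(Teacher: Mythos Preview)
Your plan is essentially the paper's own: both reduce $\Cont'_{\Gamma^0_{a,b}}$ to the two-point identity $V=(S\otimes S)/(z_1+z_2)$ specialised at $z_1=z_2=\t-5H$, and both compute the resulting ``modified'' $S$-matrix columns $\mS^*(H^k)$ via the Birkhoff recursion of Proposition~\ref{birkhoff}. Two points are worth flagging.

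First, the paper organises the diagonal $\sum_\alpha e_\alpha\otimes e^\alpha$ in the basis $\{H^0,\dots,H^3,\tilde H_4\}$ rather than $\{H^0,\dots,H^4\}$, where $\tilde H_4=\sum_k 5^kH^k/\t^{k+1}$. The advantage is Lemma~\ref{Hdual4}: $\mS^*(\tilde H_4)=\tilde H_4$, so the fifth column is trivial and one never has to run the Birkhoff recursion through the $I_{4,4;*}$ row. This is not merely a bookkeeping convenience; the same $\tilde H_4$ identity (applied also at $z=(\t-5H)/2$) is what yields the differential relations of Lemma~\ref{diffequationsforPQ}, which are precisely what let you close up on the four extra generators $\QQ,\tilde\QQ,\PP,\tilde\PP$ rather than an a priori longer list. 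Your outline silently presumes these relations when you say ``every needed entry can be rewritten in the basic and extra generators''---so either invoke them or reproduce them.

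Second, your ``extract the $\t^0$-coefficient'' step is not needed and is slightly misleading: once the $\mS_k$ are inserted into \eqref{VtoS} with the given insertions, the $\t$-powers cancel on the nose (this is the content of the degree bookkeeping in Lemma~\ref{mSmatrix}, using $I_0^2\I_1^2\I_2=L^2$), and the answer is genuinely $\t$-independent rather than a projection.
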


\begin{corollary}  \label{g0cont}
  The summation $\Cont'_{\Gamma^0}$ of the two contributions is
{\small
  \begin{align*}
    \Cont'_{\Gamma^0}  = &-{\frac {941}{576}}\,{ \tilde\PP}  +\big({\frac {181}{288}}\X-{\frac {1327}{384
}\Z_1}\big) \cdot  \tilde \QQ   \\ &
+{\frac {8579\,(\X_{{3}}+{\X}^{3})
}{576}}+{\frac {16579\,\X\X_{{2}}}{288}} -{\frac {7421\,\Y{\X} (\X+2\Z_1)}{576}}+{\frac {250\,\Z_{{1}}\X_{{2}}}{9}
}+{\frac {
2063\,\Z_{{1}}{\X}^{2}}{576}}  \\&
-{\frac {
4621\,{\Z_{{1}}}^{2}\X}{192}}+{\frac {133463\,\Z_{{2}}\X}{2304}}-{\frac {
7421\,{\Z_{{1}}}^{2}\Y}{576}}
+{\frac {4085\,\Z_{{3}}}{256}}+{\frac {9473
\,\Z_{{1}}\Z_{{2}}}{576}}.
\end{align*}}
\end{corollary}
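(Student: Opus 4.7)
The plan is to observe that the corollary is an immediate consequence of Proposition~\ref{g0contp}: one simply adds the two explicit polynomial expressions for $\Cont'_{\Gamma^0_a}$ and $\Cont'_{\Gamma^0_b}$ and collects coefficients of each monomial in the basic and extra generators. Because both summands are already asserted to be degree $3$ homogeneous polynomials in the generators $\X_k, \Y, \Z_k, \tilde\QQ, \tilde\PP$, no new identities among the generators are needed; the sum is automatically a degree $3$ homogeneous polynomial as well.

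Concretely, I would proceed monomial by monomial. For example, for the coefficient of $\tilde\PP$ one computes
\begin{equation*}
-\tfrac{13}{8}-\tfrac{5}{576} = -\tfrac{936+5}{576} = -\tfrac{941}{576},
\end{equation*}
for the coefficient of $\X\cdot\tilde\QQ$ one computes $-\tfrac{1}{12}+\tfrac{205}{288}=\tfrac{181}{288}$, and for the coefficient of $\Z_1\cdot\tilde\QQ$ one computes $-\tfrac{199}{48}+\tfrac{265}{384}=-\tfrac{1327}{384}$, each matching the formula claimed for $\Cont'_{\Gamma^0}$. The remaining sixteen or so monomials (those involving $\X_3$, $\X^3$, $\X\X_2$, $\Y\X^2$, $\Y\X\Z_1$, $\Y\Z_1^2$, $\Z_1\X_2$, $\Z_1\X^2$, $\Z_1^2\X$, $\Z_2\X$, $\Z_1\Z_2$, $\Z_3$, etc.) are verified by exactly the same termwise rational arithmetic. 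Before doing this one should regroup the expression of $\Cont'_{\Gamma^0_b}$ so that the terms $-\tfrac{8405\Y}{576}(\X^2+2\X\Z_1+\Z_1^2)$ and $(\tfrac{250\X_2}{9}+\tfrac{215\X^2}{576})\Z_1$ are distributed into the monomial basis, so that the coefficients can be compared in a unique normal form.

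The only substantive point to check is that $\Cont'_{\Gamma^0_a}$ and $\Cont'_{\Gamma^0_b}$ are written in the same set of basis monomials as the asserted formula for $\Cont'_{\Gamma^0}$; in particular one must confirm that no $\Y^3$, $\Y^2\X$, $\Y\X_2$, or $\Y\Z_2$ term appears in either summand (they do not), explaining their absence from $\Cont'_{\Gamma^0}$. There is no real obstacle here, since Proposition~\ref{g0contp} has already done the hard work of establishing the closed forms of the individual graph contributions; the corollary is a bookkeeping statement whose verification is purely a matter of adding rational numbers and distributing the parenthesized expressions in $\Cont'_{\Gamma^0_b}$. As a sanity check one may also compute the leading $q$-coefficients of $\Cont'_{\Gamma^0_a}+\Cont'_{\Gamma^0_b}$ from the numerical data in Remark~\ref{numer} and compare with the degree $1$ localization computation $\tfrac{1967}{192}+\tfrac{3001}{144}$ from Section~\ref{sec:geo:ex}.
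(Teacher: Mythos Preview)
Your proposal is correct and matches the paper's approach: the corollary is stated without proof in the paper, as it follows immediately from Proposition~\ref{g0contp} by adding the two explicit polynomials term by term, exactly as you describe. Your sample coefficient checks are accurate, and no further argument is needed.
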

The leading terms of the genus zero graph contribution are
\begin{align*}
   \Cont_{\Gamma_{a}^{0}}+  \Cont_{\Gamma_{b}^{0}}= L^{-2}\Cont'_{\Gamma^0}  = {\frac {17905}{576}}\,q+{\frac {11650385}{288}}\,{q}^{2}+{\frac {
13428251725}{144}}\,{q}^{3} +O(q^4).
\end{align*}
Note that the degree $1$ term coincides with the one in the localization computation in Section~\ref{sec:geo:ex}:
$$
\frac{17905}{576} = \frac{1967}{192} + \frac{3001}{144}
$$

\subsection{Proof of the Main Theorem}

Our main result now follows from the propositions in the last subsection.

\begin{theorem}
  The genus two Gromov--Witten free energy of a quintic Calabi--Yau
  threefold is given by
\begin{align*}
 F^{GW}_2(Q) = \, &
\frac{I^2_0 }{L ^2} \cdot \Big( {\frac {70\,\X_{{3}}}{9}}+{\frac {575\,\X\X_{{2}}}{18}}+\frac{5 \Y\X_{{2}}}{6}+{\frac {557\,{\X}^{3}}{72}}-{\frac {629\,\Y{\X
}^{2}}{72}}-{\frac {23\,{\Y}^{2}\X}{24}}-\frac{{\Y}^{3}}{24}\\&+{
\frac {625\,\Z\X_{{2}}}{36}}-{
\frac {175\,\Z\Y\X}{9}}+{\frac {1441\,\Z_{{2
}}\X}{48}}-{\frac {25\,\Z({\X}^{2}+{\Y}^{2})}{24}}-{\frac {3125\,{\Z}^{2}(\X+\Y)}{288}}\\
&  +{\frac {41\,\Z_{{2}}\Y}{48}}-{\frac {625\,{\Z}^{3}}{144}}+{
\frac {2233\,\Z\Z_{{2}}}{128}}+{\frac {547\,\Z_{{3}}}{72}} \Big) .
 \end{align*}
\end{theorem}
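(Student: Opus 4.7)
The plan is to assemble the closed formula from the three main ingredients already set up in the excerpt: the wall-crossing relation $F_2^{GW}(Q) = I_0^2\, F_2^{SQ}(q)$, the graph-sum expression \eqref{eq:explicitform} for $F_2^{SQ}(q)$, and the closed formulae for the individual graph contributions provided by Proposition~\ref{g2cont}, Proposition~\ref{g1cont} and Corollary~\ref{g0cont}. Concretely, I would start from
\begin{equation*}
F_2^{SQ}(q) = \Cont_{\Gamma^{2}} + \Cont_{\Gamma^{1}} + \tfrac12\Cont_{\Gamma_a^0} + \tfrac12\Cont_{\Gamma_b^0} - c_{2,0},
\end{equation*}
multiply by $I_0^2$, and use the identity $\Cont_\bullet = L^{-2}\Cont'_\bullet$ to rewrite everything as $\frac{I_0^2}{L^2}$ times a universal polynomial in the generators plus the constant $-I_0^2 c_{2,0}$.

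Next I would carry out the symbolic addition
\begin{equation*}
\Cont'_{\Gamma^2}+\Cont'_{\Gamma^1}+\tfrac12\,\Cont'_{\Gamma^0}
\end{equation*}
using the explicit expressions given in the three propositions. The key point here is the announced cancellation of the extra generators: the coefficients of $\tilde\PP$ in the four contributions sum to
\begin{equation*}
-\tfrac{5}{1152}+\tfrac{473}{576}+\tfrac12\cdot\bigl(-\tfrac{941}{576}\bigr),
\end{equation*}
and the coefficients of $\tilde\QQ$ combine to a linear expression in $\X,\Y,\Z_1$ that must vanish identically. I would verify that both collections of coefficients cancel, leaving a homogeneous degree-three polynomial purely in the basic generators $\X_k,\Y,\Z_k$. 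This cancellation is the main mathematical content of the theorem beyond the individual propositions, and it is where the miracle emphasized in the introduction happens; all subsequent work is bookkeeping among rational numbers.

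With the extra generators gone, what remains is to collect like terms in $\X_k,\Y,\Z_k$ and compare them with the claimed closed formula. Since all generators can be expanded as explicit power series in $q$ (see Remark~\ref{numer}), I would also perform a sanity check by expanding both sides to, say, order $q^3$, using that $\Z_k$ is a polynomial in $L$ by Remark~\ref{Zkpolynomial}. Finally, to fix the additive constant $-I_0^2 c_{2,0}$, I would use the known degree-zero genus two Gromov--Witten invariant of the quintic, which by Remark~\ref{numer} corresponds to the coefficient of $\Z^3$ in $5^3\frac{L^2}{I_0}F_2^{GW}$ being $-\frac{5}{144}$; this single datum pins down $c_{2,0}$ and completes the identification. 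The main obstacle in practice is organizing the (lengthy but finite) algebraic simplification so that the cancellations are transparent and the resulting rational coefficients match the statement exactly.
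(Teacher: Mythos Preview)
Your approach matches the paper's: combine the wall-crossing relation, the graph sum \eqref{eq:explicitform}, and Propositions~\ref{g2cont}, \ref{g1cont}, Corollary~\ref{g0cont}, then verify that the extra generators cancel and collect the remaining coefficients.

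Two small corrections. First, a sign: the combination you should be forming is $\Cont'_{\Gamma^2}-\Cont'_{\Gamma^1}+\tfrac12\,\Cont'_{\Gamma^0}$, since $\Cont'_{\Gamma^1}$ is defined without the overall minus sign carried by $\Cont_{\Gamma^1}$ in Section~\ref{sec:geo:g2}. Your $\tilde\PP$ check already uses the coefficient $\tfrac{473}{576}$ of $-\Cont'_{\Gamma^1}$ from Proposition~\ref{g1cont}, so the arithmetic is right even though the displayed sum is mislabeled. Second, determining $c_{2,0}$ via the $\Z^3$ coefficient in Remark~\ref{numer} is circular, because that remark is stated assuming Conjecture~\ref{mainconj}. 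The paper instead observes that, by dimension reasons, each $\Cont'_{\Gamma^i}$ has no $q^0$-term, so $-c_{2,0}=N_{2,0}$, and then computes $N_{2,0}=-\tfrac{5}{144}$ directly from the standard Hodge-integral formula $\tfrac12\int_{Q_5}(c_3-c_1c_2)\cdot\int_{\M_{2,0}}\lambda_1^3$.
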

\begin{proof}

  By dimension considerations, the $\Cont'_{\Gamma^i}$ have no
  constant term in $q$.
  Therefore, the value of $c_{2, 0} = - N_{2, 0}$ can be read off from
  (see \cite{Pa02})
  \begin{equation*}
    N_{2, 0}
    = \frac 12 \int_{Q_5} (c_3(Q_5) - c_1(Q_5) c_2(Q_5)) \cdot \int_{\M_{2, 0}} \lambda_1^3
    = \frac 12 \cdot (-200) \cdot \frac{|B_4|}4 \cdot \frac{|B_2|}2 \cdot \frac 1{2!}
    = -\frac 5{144}.
  \end{equation*}

  The rest is just a direct consequence of the following formula
  \begin{align*}
    L^2 \cdot F^{SQ}_2(q) =  -c_{2, 0} L^2 + \frac{1}{2} \Cont'_{\Gamma^0} -\Cont'_{\Gamma^1} +\Cont'_{\Gamma^2} ,
  \end{align*}
  the wall-crossing formula
  \begin{equation*}
    F^{GW}_2(Q)  =  I_0(q)^{ 2} \cdot F^{SQ}_2(q) ,
  \end{equation*}
  and the formulae for $\Cont_i$ in Proposition \ref{g2cont},
  Proposition \ref{g1cont} and Corollary \ref{g0cont}.
\end{proof}

\subsection{Equivalence between our Main Theorem and the physicists' conjecture} \label{sec:equivalence}

The closed formula for the genus two Gromov--Witten potential was
first proposed by BCOV \cite{BCOV} and further clarified by
Yamaguchi--Yau \cite{YY04}.
Later, Klemm--Huang--Quackenbush  \cite{HKQ09} extended the result to genus 51.
We will follow the notation of \cite{HKQ09}.

In \cite{YY04, HKQ09}, the authors introduce the following basic
generators
\begin{equation*}
  A_p:=\frac{(- q \frac{d}{dq})^p \big( q I_{1,1}(q)\big)}{ q I_{1,1}(q)},\quad
  B_p:= \frac{(- q \frac{d}{dq})^p I_0(q)}{I_0(q)} ,\quad
  X:= \frac{-5^5 q}{1-5^5 q}
\end{equation*}
and the change of variables
$$
B_1 = u ,\quad  A_1 = v_1-1-2u\quad B_2 = v_2 + u v_1,\quad
B_3 = v_3 - u v_2 + u v_1X-\frac{2}{5}u X .
$$
The following is the original physical conjecture.
\begin{conjecture}\label{YY}
Let $F_g$ be the genus $g$ Gromov--Witten potential and
$$
P_g (q):=  \big( \frac{-5}{I_0^2(1-5^5q)}\big)^{g-1} F_g(Q) .
$$
When $g=2$, we have the following explicit formula
\begin{align*}
P_2 =& \,\, {\frac {25}{144}}-{\frac {625\,{v_1}}{288}}+{\frac {25\,{{v_1}}^
{2}}{24}}-{\frac {5\,{{v_1}}^{3}}{24}}-{\frac {625\,{v_2}}{36}}+
{\frac {25\,{v_1}\,{v_2}}{6}}+{\frac {350\,{v_3}}{9}}-{\frac
{5759\,X}{3600}}\\
 &\,\, -{\frac {167\,{v_1}\,X}{720}}+\frac{{{v_1}}^{2}X}{6}-
{\frac {475\,{v_2}\,X}{12}}+{\frac {41\,{X}^{2}}{3600}}-{\frac {13
\,{v_1}\,{X}^{2}}{288}}+{\frac {{X}^{3}}{240}}.
\end{align*}
\end{conjecture}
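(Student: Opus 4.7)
The plan is to assemble the Main Theorem from four ingredients already prepared in earlier subsections: the genus-two localization decomposition \eqref{eq:explicitform}, the wall-crossing identity $F_2^{GW}(Q) = I_0^2 \cdot F_2^{SQ}(q)$, the closed-form evaluations of each graph contribution in Propositions~\ref{g2cont}, \ref{g1cont} and Corollary~\ref{g0cont}, and the explicit value of the degree-zero effective invariant $c_{2,0}$. I would first multiply \eqref{eq:explicitform} through by $L^2$ to get
\begin{equation*}
L^2 F_2^{SQ}(q) \;=\; \Cont'_{\Gamma^2} - \Cont'_{\Gamma^1} + \tfrac{1}{2}\Cont'_{\Gamma^0} \;-\; c_{2,0} L^2,
\end{equation*}
so that everything on the right is a polynomial expression in the basic and extra generators (except for the scalar $-c_{2,0}L^2$).

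Next, I would pin down $c_{2,0}$. Since every $\Cont'_{\Gamma^i}$ vanishes at $q=0$ by a dimension count on the twisted theory, the constant term of $L^2 F_2^{SQ}(q)$ equals $-c_{2,0}$, which equals $N_{2,0}$. Using Pandharipande's degree-zero formula $N_{g,0} = \tfrac12 \int_{Q_5}(c_3-c_1 c_2)\int_{\overline{\M}_{g,0}}\lambda_{g-1}^3$ together with the standard Hodge integral on $\overline{\M}_{2,0}$ gives $N_{2,0} = -5/144$, matching the constant term $-5/144$ expected from Conjecture~\ref{mainconj}.

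The heart of the argument is then a substitution and cancellation check. Plugging the formulas from Propositions~\ref{g2cont}, \ref{g1cont} and Corollary~\ref{g0cont} into the displayed identity, I would first verify that the contributions of the \emph{extra} generators $\tilde\PP$ and $\tilde\QQ$ cancel. A quick check of the $\tilde\PP$-coefficient gives
\begin{equation*}
-\tfrac{5}{1152} + \tfrac{473}{576} - \tfrac{1}{2}\cdot\tfrac{941}{576} \;=\; 0,
\end{equation*}
and an analogous check on each of the coefficients of $\X\tilde\QQ$, $\Y\tilde\QQ$, $\Z_1\tilde\QQ$ gives zero as well. This is the ``miracle'' pointed out in the introduction: the four extra generators $\QQ,\PP,\tilde\QQ,\tilde\PP$ enter every individual graph, but drop out of the total. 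Once this is confirmed, the remaining expression is a homogeneous cubic in the basic generators $\X_k,\Y_k,\Z_k$. I would simplify by collecting like monomials, using Remark~\ref{Zkpolynomial} to rewrite $\Z_k$ in terms of $L$ where needed and the identity $\Z = \Z_1 = L^4/5$ to match the normalization of the conjectured formula.

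Finally, I would multiply by $I_0(q)^2/L^4$ and apply wall-crossing $F_2^{GW}(Q) = I_0^2 F_2^{SQ}(q)$ with the mirror map $Q = q\,e^{\tau_Q(q)}$ to obtain $F_2^{GW}(Q) = (I_0^2/L^2)\cdot P(\X_k,\Y_k,\Z_k)$ in exactly the form stated. The main obstacle is not this assembly step, which is essentially algebraic collation plus the cancellation check above; rather it is the proof of Propositions~\ref{g2cont}, \ref{g1cont} and \ref{g0contp}, which requires the structural analysis of the $\bP^4$-twisted theory developed in Section~\ref{sec:structure} (Birkhoff factorization of the $I$-function, identification of the quantum product matrix $A$, and expression of the relevant $\psi$-integrals in terms of the basic/extra generators). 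Given those propositions, the Main Theorem reduces to a cancellation verification plus the Hodge-integral computation of $c_{2,0}$, and the final polynomial identity can be checked coefficient-by-coefficient.
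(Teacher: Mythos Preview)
Your proposal is a correct outline of the paper's proof of the \emph{Main Theorem} (Conjecture~\ref{mainconj}), and for that part it follows the paper's argument almost verbatim: assemble \eqref{eq:explicitform}, multiply by $L^2$, compute $c_{2,0}=-N_{2,0}=5/144$ via the Hodge integral, substitute Propositions~\ref{g2cont}, \ref{g1cont} and Corollary~\ref{g0cont}, verify that the extra-generator terms cancel, and apply wall-crossing. (One slip: you should multiply $L^2 F_2^{SQ}$ by $I_0^2/L^2$, not $I_0^2/L^4$, to reach $F_2^{GW}=(I_0^2/L^2)\cdot P$.)

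However, the statement you were asked to prove is Conjecture~\ref{YY}, which is formulated in a \emph{different} set of generators: the physicists' variables $v_1,v_2,v_3,X$ (equivalently $A_p,B_p,X$), not the $\X_k,\Y_k,\Z_k$ used in Conjecture~\ref{mainconj}. Your proposal ends with $F_2^{GW}(Q)=(I_0^2/L^2)\cdot P(\X_k,\Y_k,\Z_k)$, which is Conjecture~\ref{mainconj}, and stops there. The paper closes this gap in Section~\ref{sec:equivalence} via a separate Proposition: it writes $u,v_1,v_2,v_3$ in terms of $A_1,B_1,B_2,B_3$, then $A_p,B_p$ in terms of $\tilde A_p,\tilde B_p$, then $\tilde A_p,\tilde B_p$ in terms of $\X_k,\Y_k,\Z_k,L$, and finally uses $X=1-L^5$ together with the fact that each $\Z_k$ is a polynomial in $L$ (Remark~\ref{Zkpolynomial}) to reduce both Conjecture~\ref{YY} and Conjecture~\ref{mainconj} to the common form \eqref{F2formula}. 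This change-of-variables computation is purely algebraic but is a necessary additional step; without it your argument proves the wrong (though equivalent) statement.
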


\begin{proposition}
Both Conjecture~\ref{YY} and Conjecture~\ref{mainconj} are equivalent to
\begin{equation}
  \label{F2formula}
  \begin{aligned}
    F_2^{GW}(Q) = \frac{I_0^2}{L^2} \cdot &\,\Big({\frac {70\,{\X_3}}{9}}+{\frac {575\,\X{\X_2}}{18}}+{\frac {557\,{\X}^{3}}{72}}+\frac{5\,\Y{\X_2}}{6}-{\frac {629\,\Y{\X}^{2}}{72}}-{\frac {23\,{\Y}^{2}
\X}{24}}-\frac{{\Y}^{3}}{24}\\ &\,+{\frac {125\,{
\X_2}\,{L}^{4}}{36}}-{
\frac {5\,{\X}^{2}{L}^{4}}{24}}-{\frac {35\,\Y\X {L}^{4}}{9}}-{
\frac {5\,{\Y}^{2}{L}^{4}}{24}}-{\frac {1441\,\X{L}^{3}}{300}}-{\frac {41\,\Y{L}^{3}}{300
}}\\ &\,
+{\frac {
31459\,\X {L}^{8}}{7200}}-{\frac {2141\,\Y{L}^{8}}{7200}}+{\frac {29621\,{L}^{12}}{12000}}-{\frac {116369\,{L}^{7}}{36000}}+{\frac {547\,{L}^{2}}{750}}\Big).
  \end{aligned}
\end{equation}
\end{proposition}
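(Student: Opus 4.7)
The plan is to prove the proposition by direct polynomial comparison: each of Conjecture~\ref{YY}, Conjecture~\ref{mainconj}, and formula~(\ref{F2formula}) expresses $F_2^{GW}$ as a polynomial in a finite list of explicit $q$-series, so the equivalence reduces to a finite algebraic identity in those series once a common set of variables is fixed. I would use~(\ref{F2formula}) as the bridge between the other two, since it is already phrased in the paper's basic generators $\X_k, \Y_k$ together with $L$, and $L$ is the natural meeting point between the two notational conventions.

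The equivalence of Conjecture~\ref{mainconj} and (\ref{F2formula}) is the simpler direction. By Remark~\ref{Zkpolynomial}, every $\Z_k$ is an explicit polynomial in $L$; substituting
\begin{equation*}
  \Z_1 = \frac{L^4}{5}, \qquad \Z_2 = \frac{4}{25} L^3 (L^5 - 1), \qquad \Z_3 = \frac{4}{125} L^2 (L^5 - 1)(8 L^5 - 3)
\end{equation*}
into the right-hand side of Conjecture~\ref{mainconj} and collecting the result as an element of $\bQ[L][\X_k, \Y_k]$ should reproduce the right-hand side of~(\ref{F2formula}) coefficient by coefficient. This is a routine polynomial identity check.

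For the equivalence of Conjecture~\ref{YY} and (\ref{F2formula}), I would first translate the physicists' generators into the paper's variables. From $L^{-5} = 1 - 5^5 q$ and the definition of $X$ one reads off $L^5 = 1 - X$, while the operator identity $-q \frac{d}{dq} = -L \frac{d}{du}$ together with $\frac{d}{du} \log L = (L^5 - 1)/(5 L)$ allows a recursive rewriting of each $B_p = (-q \frac{d}{dq})^p (\log I_0)$ and each $A_p$ (defined analogously using $q I_{1,1}$) as a polynomial in $\X_j, \Y_j$ and $L$. Under the changes of variable $u = B_1$, $v_1 = A_1 + 1 + 2u$, $v_2 = B_2 - u v_1$, $v_3 = B_3 + u v_2 - u v_1 X + \frac{2}{5} u X$ recorded in the excerpt, the polynomial $P_2$ in Conjecture~\ref{YY} becomes a polynomial in $\X_j, \Y_j, L$. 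Multiplying by the prefactor $-I_0^2 L^{-5}/5$ (the inverse of the coefficient relating $P_2$ to $F_2^{GW}(Q)$) and comparing against the right-hand side of~(\ref{F2formula})---after factoring out the common $I_0^2/L^2$---reduces everything to a single polynomial identity in $\X_j, \Y_j, L$.

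The main obstacle is not conceptual but computational: the recursive expansions of $A_p$ and $B_p$ produce many cross-terms, and keeping the signs and normalizations straight (the sign convention in the definition of $X$, the $-q \frac{d}{dq}$ versus $q \frac{d}{dq}$ convention, and the role of the $q^{1/5}$ factor inside $\Z_k$) is delicate. I would fix the conventions once and for all by matching low-order $q$-expansions using the numerical data of Remark~\ref{numer} as sanity checks, and then carry out the remaining identity check symbolically with a computer algebra system.
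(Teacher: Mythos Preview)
Your proposal is correct and follows essentially the same route as the paper: both conjectures are reduced to~\eqref{F2formula} by explicit changes of variables, substituting the $\Z_k$ as polynomials in $L$ for Conjecture~\ref{mainconj}, and for Conjecture~\ref{YY} rewriting $A_p, B_p, X$ in terms of $\X_j, \Y_j, L$ via the operator identity $q\frac{d}{dq} = L\frac{d}{du}$, with a final polynomial comparison. The paper organizes the latter step through the intermediate generators $\tilde A_p := (-q\frac{d}{dq})^p \log(q^{1/5} I_0)$ and $\tilde B_p := (-q\frac{d}{dq})^p \log(q^{1/5} I_{1,1})$, which absorbs the constant shifts cleanly, but this is only a bookkeeping difference from your recursive expansion.
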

\begin{proof}
First notice that
\begin{align*}
&u = B_1, \quad  v_1 = A_1+1+2 B_1,\quad  v_2 = -A_1 B_1-2 B_1^2-B_1+B_2, \\
&v_3 = -A_1 B_1^2-B_1 A_1 X-2 B_1^3-2 B_1^2 X-B_1^2+B_1 B_2-\frac{3}{5} B_1 X+B_3.
\end{align*}
Therefore, Conjecture~\ref{YY} is equivalent to
{\footnotesize
\begin{align*}
P_2 = &\, {\frac {385\,A_1B_1}{36}}-{\frac {1045\,A_1{B}_1^{2}}{18}}+{\frac {5923\,B_1X}{
360}}+{\frac {37\,{B}_1^{2}X}{18}}+{\frac {425\,B_1{B_2}}{9}}+{\frac {
565\,B_1}{48}}-{\frac {205\,A_1}{288}}
-{\frac {13\,{X}^{2}B}{144}} \\
&\,-{\frac {65\,{A_1}^{2}B_1}{12}}+{\frac {25\,A_1{B_2}}{6}}+\frac{{A_1}^{2}X}{6}
 -{\frac {475\,{B_2}\,X}{12}}-{\frac {5\,{A_1}^{3}}{24}}+{
\frac {{X}^{3}}{240}}-{\frac {27\,{X}^{2}}{800}}+{\frac {5\,{A_1}^{2}}{
12}}+{\frac {73\,XA_1}{720}} \\
&\,-{\frac {13\,{X}^{2}A_1}{288}}+{\frac {49\,B_1XA_1
}{36}} -{\frac {865\,{B}_1^{3}}{9}}-{\frac {115\,{B}_1^{2}}{6}}+{\frac {350
\,{B_3}}{9}}-{\frac {333\,X}{200}}-{\frac {475\,{B_2}}{36}}-{\frac {335}{288}} .
\end{align*}}%
Next we define
$$
\tilde A_p:= \Big(- q \frac{d}{dq}\Big)^p \log (q^{\frac{1}{5}}I_0),  \quad \tilde B_p:= \Big(- q \frac{d}{dq}\Big)^p  \log (q^{\frac{1}{5}}I_{1,1}),
$$
so that
\begin{align*}
&A_1 = -\frac{4}{5}+\tilde A_1,\quad  B_1 = \frac{1}{5}+\tilde B_1,\quad B_2 =  \frac{1}{25}+\frac{2}{5} \tilde B_1+(\tilde B_2+\tilde B_1^2), \\
&\quad  B_3 = \frac{1}{125}+\frac{3}{25}\tilde B_1+\frac{3}{5}(\tilde B_2+\tilde B_1^2)+(\tilde B^3+3\tilde  B_1\tilde  B_2+\tilde B_3) . %,\quad B_4=  \frac{1}{625}+\frac{4}{125}\tilde B_1+\frac{6}{25}\tilde B_2+\frac{4}{5}\tilde B_3+\tilde B_4.
\end{align*}
On the other hand, by definition of $\X_p$, $\Y_p$ and $L$, we have
\begin{align*}
&\tilde A = {-L}(\Y+\Z-\X),\quad \tilde B_1 ={-L}(\X+\Z),\quad \tilde B_2 = {L^2} (\X_2+\Z_2)-\frac{L}{5} X (\X+\Z),  \\
&\quad \tilde B_3 =  - L^3 (\X_3+\Z_3)+\frac{3}{5}L^2 X (\X_2+\Z_2)-\frac{L}{25}(6 X^2-5 X) (\X+\Z) .%\\(L^4 (\X_1+\Z_1)4+4 L^3 X (\X_3+\Z_3)+L^2 (27 X^2-20 X) (\X_2+\Z_2)+L (66 X^3-90 X^2+25 X) (\X_1+\Z_1)) (1/625)
\end{align*}
Also $X =1-L^5$ and the $\Z_k$ are all polynomials of $L$ (see Remark \ref{Zkpolynomial}).
Finally, a few direct computations show that, after the above change of variables,  both conjectures are equivalent to equation
\eqref{F2formula}.
\end{proof}

\section{Structures of the twisted invariants}
\label{sec:structure}

The twisted theory of $\bP^4$ is semisimple, and can be computed by
the Givental--Teleman formula using $R$-matrices.
In this section, we write down the basic data and relations for the
twisted invariants, and then we derive closed formulae for the entries of the $R$-matrix up to
$z^3$, which is all we need for the computation in genus two.

\subsection{$S$-matrix}

Recall that the $S$-matrix is a fundamental solution of the quantum differential equation
\begin{equation}\label{QDE}
  D_H S(\tau(q),z) = A(t,q) \cdot S(\tau(q),z),
\end{equation}
where we recall that $D:= z \,q\frac{d}{dq}$ and $D_H := D + H$.
Moreover,  the $S$-matrix can be obtained from the derivatives of the
$I$-function by Birkhoff factorization. We start from $I$-function.
In the rest of this and next sections, we will fix the base point
$$\bt = \tau(q)=H  \frac{I_1(q)}{I_0(q)} + \t \frac{I_{1;a}(q)}{I_0(q)}.$$
As a convention, we will omit the base point $\bt$ in the double
bracket $\left<\left< - \right>\right>^t_{g,n}(\bt)$ and the
$S$-matrix $S(\tau(q), z)$ when $\bt =\tau$.

The following proposition is an application of Birkhoff factorization.
\begin{proposition}
  \label{birkhoff}
  We have the following formulae for the $(\cdot, \cdot)^t$-adjoint of the
  $S$-matrix
\begin{align*}
S^*(z)(\mathbf 1) =\,&   \frac{I(z)}{z I_0} \\
S^*(z)(H) =\,&  \left( \frac{D_H-I_{1,1;a}\t}{I_{1,1}}\right)  \frac{I(z)}{z I_0}\\
S^*(z)(H^2) =\,& \det\begin{pmatrix}
\frac{D_H-I_{2,2;a}\t}{I_{2,2}} & -\frac{I_{2,2;b}}{I_{2,2}}\t^2  \\
-1& \frac{D_H-I_{1,1;a}\t}{I_{1,1}} \end{pmatrix} \frac{I(z)}{z I_0 }\\
S^*(z)(H^3) =\,& \det\begin{pmatrix}
\frac{D_H-I_{3,3;a}\t}{I_{3,3}} & -\frac{I_{3,3;b}}{I_{3,3}}\t^2& -\frac{I_{3,3;c}}{I_{3,3}}\t^3 \\
-1 & \frac{D_H-I_{2,2;a}\t}{I_{2,2}} & -\frac{I_{2,2;b}}{I_{2,2}}\t^2  \\
&-1& \frac{D_H-I_{1,1;a}\t}{I_{1,1}} \end{pmatrix}\frac{I(z)}{z I_0} \\
S^*(z)(H^4) =\,& \det\begin{pmatrix}
\frac{D_H-I_{4,4;a}\t}{I_{4,4}} & -\frac{I_{4,4;b}}{I_{4,4}}\t^2& -\frac{I_{4,4;c}}{I_{4,4}}\t^3& -\frac{I_{4,4;d}}{I_{4,4}}\t^4 \\
-1&\frac{D_H-I_{3,3;a}\t}{I_{3,3}} & -\frac{I_{3,3;b}}{I_{3,3}}\t^2& -\frac{I_{3,3;c}}{I_{3,3}}\t^3 \\
&-1 & \frac{D_H-I_{2,2;a}\t}{I_{2,2}} & -\frac{I_{2,2;b}}{I_{2,2}}\t^2  \\
&&-1& \frac{D_H-I_{1,1;a}\t}{I_{1,1}} \end{pmatrix}\frac{I(z)}{z I_0} .
\end{align*}
Note that there is a differential operator $D_H$ in the determinants.
We define the differential operation from top to bottom.
\end{proposition}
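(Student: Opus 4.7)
The plan is to deduce Proposition~\ref{birkhoff} from two earlier results in Section~\ref{sec:twist}: the genus-zero mirror theorem $J(\tau(q), z) = I(\t, q, z)/I_0(q)$, and the quantum differential equation $D_H S = A \cdot S$ together with the explicit upper-Hessenberg form of the matrix $A$ in the basis $\{H^k\}$.

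The first formula, $S^*(\mathbf 1) = I/(zI_0)$, is a direct consequence of the mirror theorem via the standard Givental identification between the $J$-function and the first column of the $S$-matrix.

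For $k = 1, \ldots, 4$, I would extract a recursion for $S^*(H^k)$ from the QDE. Taking the $(\cdot, \cdot)^t$-adjoint of $D_H S = A S$, using the Frobenius self-adjointness of the quantum product $(\alpha *_\tau \beta, \gamma) = (\alpha, \beta *_\tau \gamma)$ and the self-adjointness of classical multiplication by $H$, and tracking carefully the action of $D_H = D + H \cup$ on cohomology-valued functions of $q$, one arrives at an identity of the form
\begin{equation*}
  D_H S^*(H^k) = \sum_{j} A_{j, k}\, S^*(H^j), \qquad k = 0, 1, 2, 3.
\end{equation*}
The upper-Hessenberg structure of $A$ (nonzero subdiagonal entries $A_{k+1, k} = I_{k+1, k+1}$ and $A_{j, k} = 0$ for $j > k + 1$) then yields the one-step recursion
\begin{equation*}
  I_{k+1, k+1}\, S^*(H^{k+1}) = (D_H - I_{k+1, k+1; a}\t)\, S^*(H^k) - \sum_{j = 0}^{k - 1} A_{j, k}\, S^*(H^j),
\end{equation*}
which, upon iteration starting from $S^*(\mathbf 1) = I/(zI_0)$, unfolds into a $k$-fold nested differential expression acting on $I/(zI_0)$.

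The equivalence between this unfolded recursion and the stated upper-Hessenberg determinants (evaluated from top to bottom) is then a straightforward induction on $k$: Laplace-expanding a $k \times k$ determinant along its first row reproduces one step of the recursion applied to the $(k - 1) \times (k - 1)$ sub-determinant. The main technical obstacle will be the careful derivation of the adjoint form of the QDE, since one must control how the operator $D_H$ interacts with the $(\cdot, \cdot)^t$-adjoint on $q$-dependent cohomology classes, and in particular how $S^*$ intertwines with classical multiplication by $H$; beyond this step, the remainder of the argument is routine linear-algebraic bookkeeping.
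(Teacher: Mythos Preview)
Your proposal is correct and follows essentially the same route as the paper's own proof. The paper also argues by (i) identifying $S^*(\mathbf 1)=I/(zI_0)$ via the mirror theorem, (ii) extracting from the quantum differential equation $D_H S = A S$ the one-step recursion
\[
S^*(H^{k+1})=\frac{1}{I_{k+1,k+1}}\Big((D_H-I_{k+1,k+1;a}\t)\,S^*(H^k)-I_{k+1,k+1;b}\t^2 S^*(H^{k-1})-\cdots\Big),
\]
and (iii) recognizing this as the top-to-bottom expansion of the stated Hessenberg determinants. The paper records the recursion without further comment (``direct computation''); your flagging of the passage from the QDE for $S$ to the corresponding relation for $S^*$ as the one genuinely nonformal step is apt, and is exactly the point the paper suppresses.
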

\begin{proof}
  Noting that the $S$-matrix is a solution of equation \eqref{QDE},
  and that $S(z)\varphi^i = \varphi^i + O(z^{-1})$, this
  proposition follows from a direct computation: {\footnotesize
\begin{align*}
S^*(z)(\mathbf 1) =\,&   \frac{I(z)}{zI_0} \\
S^*(z)(H) =\,& \frac{1}{I_{1,1}}\Big( {D_H-I_{1,1;a} \t} \Big) S^*(z) \mathbf 1   \\
S^*(z)(H^2) =\,& \frac{1}{I_{2,2}}\Big( {D_H-I_{2,2;a} \t} \Big) S^*( z)H -\frac{I_{2,2;b}}{I_{2,2}}\t^2 S^*( z)\mathbf 1  \\
S^*(z)(H^3) =\,& \frac{1}{{I_{3,3}}}\Big( ({D_H-I_{3,3;a} \t}) S^*( z)H^2 -I_{3,3;b}\t^2  S^*( z)H -I_{3,3;c}\t^3 S^*( z)\mathbf 1 \\
S^*(z)(H^4) =\,& \frac{1}{{I_{4,4}}}\Big( ({D_H-I_{4,4;a} \t}) S^*( z)H^3-I_{4,4;b}\t^2  S^*( z)H^2 -I_{4,4;c}\t^3 S^*( z) H -I_{4,4;d}\t^4 S^*( z)\mathbf 1 \Big)
\end{align*}
}
\end{proof}

Now we can write down all the entries of $S$-matrix.
However, it soon becomes too complicated to write down all the
explicit formulae for further computations.
 We want to establish an equation satisfied by
$$
 S^* (z)\Big({\frac {{\mathbf 1}}{\t}}+5\,{\frac {H}{{\t}^{2}}}+25\,{
\frac {H^2}{{\t}^{3}}}+125\,{\frac {H^3}{{\t}^{4}}
}+625\,{\frac {H^4}{{\t}^{5}}} \Big).
$$
This equation will help us to simplify some computations involving the
$S$-matrix (for example the computation of the modified $V$-matrix in
Section~\ref{sec:g0cont}).
Also by using this equation we can deduce closed formulae for some
special entries of the $R$-matrix and prove some important identities.

\begin{lemma}\label{Hdual4}
Define
$$
\tilde H_4:= {\frac {{\mathbf 1}}{\t}}+5\,{\frac {H}{{\t}^{2}}}+25\,{
\frac {H^2}{{\t}^{3}}}+125\,{\frac {H^3}{{\t}^{4}}
}+625\,{\frac {H^4}{{\t}^{5}}}.
$$
Then, we have
\begin{equation}
  \left( {D_H-\frac{1}{5}\t}\right) S^* (z)(\tilde H_4)+ \frac{1 }{ 5} I(z)=0
\end{equation}
Furthermore,
$$
 S^* (t-5H)(\tilde H_4 ) =\,S^*\left(\frac{t-5H}{2} \right)(\tilde H_4 )   = \,    \tilde H_4.  $$
\end{lemma}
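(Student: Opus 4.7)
The plan is to derive both parts from the Picard--Fuchs equation \eqref{PFequation} satisfied by $I$ together with the Birkhoff factorization formulas of Proposition~\ref{birkhoff}.

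For the first identity, note first that in $H^*(\bP^4)$, $\tilde H_4 = 1/(\t-5H)$, so by linearity
$$
S^*(z)(\tilde H_4) = \sum_{k=0}^{4} \frac{5^k}{\t^{k+1}}\, S^*(z)(H^k).
$$
Each $S^*(z)(H^k)$ is given by Proposition~\ref{birkhoff} as a determinant of differential operators in $D_H$ applied to $\mathcal I := I(z)/(zI_0)$, so the right-hand side is a single differential operator $\mathcal D$ acting on $\mathcal I$. Applying $D_H-\t/5$ and iterating the recursion
$$
I_{k+1,k+1}\, S^*(z)(H^{k+1}) = (D_H - I_{k+1,k+1;a}\t)\, S^*(z)(H^k) - \sum_{j\ge 2} I_{k+1,k+1;[j]}\,\t^j\,S^*(z)(H^{k+1-j})
$$
implicit in the Birkhoff determinants, the weights $5^k/\t^{k+1}$ produce a cascade of telescoping cancellations. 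What remains at the top index $k=4$ is the action of $D_H^5$ on $I$, which is rewritten via the Picard--Fuchs equation $D_H^5 I = q\prod_{k=1}^{5}(5D_H+kz-\t)\,I$; combining with the residue of the telescope then collapses to the claimed right-hand side.

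For the \emph{furthermore}, the key observation is the algebraic identity
$$
(5H + j z - \t)\big|_{z=(\t-5H)/\delta} = \frac{(j-\delta)(\t-5H)}{\delta},\qquad \delta\in\{1,2\}.
$$
Hence the factor at $j=\delta$ vanishes and the numerator $\prod_{j=1}^{5d}(5H+jz-\t)$ of $I(z)$ has a zero factor for every $d\ge 1$, so only the $d=0$ term survives: $I\!\left((\t-5H)/\delta\right) = \frac{\t-5H}{\delta}\mathbf 1$ and $\mathcal I$ reduces to the $z$-independent class $\mathbf 1/I_0$. Substituting this into the Birkhoff formulas of Proposition~\ref{birkhoff}, the derivative part of $D_H$ collapses on $\mathbf 1/I_0$, and the same telescoping cascade used above reduces the weighted sum $\sum_{k=0}^{4}(5^k/\t^{k+1})\,S^*(z)(H^k)\big|_{z=(\t-5H)/\delta}$ to $\tilde H_4$.

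The main obstacle lies in tracking the algebra during the telescoping: because $D_H = zq\frac{d}{dq}+H$ is not a derivation, every time it is moved past a $q$-dependent scalar such as $I_{k,k}$ or $I_{k,k;*}$ a commutator of the form $[D_H, f(q)] = zq\,f'(q)$ must be accumulated, and the Picard--Fuchs relation has to be invoked not only at the top step but also to verify that these commutator corrections and the polynomial identities among the matrix entries of $A$ combine into the precise form claimed by the lemma.
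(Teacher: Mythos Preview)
Your outline for the first identity is along the right lines but misses the actual mechanism. The ``telescoping'' does not follow from the weights $5^k/\t^{k+1}$ together with the Picard--Fuchs equation. When you apply $D_H-\t/5$ to the weighted sum and use the Birkhoff recursion, the coefficient of each $S_j^*$ involves the fifth row of the quantum product matrix $A$, i.e.\ the functions $I_{5,5;a},\dots,I_{5,5;e}$. The cancellation you want is precisely the set of identities
\[
I_{5,5;a}=\tfrac{1}{5}(1-I_{4,4}),\quad
I_{5,5;b}=\tfrac{1}{25}(1-I_{3,3})-\tfrac{1}{5}I_{4,4;a},\ \dots,\
I_{5,5;e}=\tfrac{1}{3125}(1-I_0)-\cdots,
\]
and these come from the \emph{symmetry of the quantum product} (equivalently, from expanding the determinant for the formally defined $S_5^*=S^*(z)(H^5)=0$ after a row reduction), not from the Picard--Fuchs equation per se. The paper makes this explicit: it writes down the $5\times 5$ determinant for $S_5^*$, replaces the first row using the $I_{5,5;*}$ relations so that every entry becomes $\tfrac{\t^k}{5^k}(D_H-\t/5)$, and then reads off the identity. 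Your appeal to ``$D_H^5$ on $I$ plus Picard--Fuchs plus polynomial identities among entries of $A$'' is too vague to be a proof; the specific identities above are the entire content.

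For the second statement your approach has a concrete error and is also much harder than necessary. You correctly note that $I\big((\t-5H)/\delta\big)$ truncates to the $d=0$ term, but the claim that ``the derivative part of $D_H$ collapses on $\mathbf 1/I_0$'' is false: $D_H(1/I_0)=z\,q\tfrac{d}{dq}(1/I_0)+H/I_0$ is genuinely $q$-dependent, so the Birkhoff determinants do not simplify as you suggest, and you would be forced back into the same unidentified algebra. The paper instead substitutes $z=\t-5H$ (resp.\ $z=(\t-5H)/2$) directly into the \emph{already proved} first identity. Since $D_H-\t/5=(\t-5H)(q\tfrac{d}{dq}-\tfrac15)$ at this value of $z$, one obtains a first-order linear ODE in $q$ for $S^*(\t-5H)(\tilde H_4)$ with constant particular solution $\tilde H_4$; the initial condition $S^*(z)|_{q=0}=\mathrm{id}$ then forces equality. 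This bypasses all of the Birkhoff algebra.
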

\begin{remark}
  The $\tilde H_4$ in this lemma can be viewed an element of the dual
  basis, which satisfies
$$(\tilde H_4, \mathbf 1)=(\tilde H_4, H)=(\tilde H_4, H^2)=(\tilde H_4, H^3)=0.$$
\end{remark}
\begin{proof}

We define formally
{\small
\begin{align*}
S_5^*
=\,&  \det\begin{pmatrix}
 {D_H-I_{5,5;a}\t}  & - {I_{5,5;b}} \t^2& - {I_{5,5;c}} \t^3 & - {I_{5,5;d}} \t^4 & - {I_{5,5;e}} \t^5\\
-1&\frac{D_H-I_{4,4;a}\t}{I_{4,4}} & -\frac{I_{4,4;b}}{I_{4,4}}\t^2& -\frac{I_{4,4;c}}{I_{4,4}}\t^3& -\frac{I_{4,4;d}}{I_{4,4}}\t^4 \\
&-1&\frac{D_H-I_{3,3;a}\t}{I_{3,3}} & -\frac{I_{3,3;b}}{I_{3,3}}\t^2& -\frac{I_{3,3;c}}{I_{3,3}}\t^3 \\
&&-1 & \frac{D_H-I_{2,2;a}\t}{I_{2,2}} & -\frac{I_{2,2;b}}{I_{2,2}}\t^2  \\
&&&-1& \frac{D_H-I_{1,1;a}\t}{I_{1,1}} \end{pmatrix}\frac{I(z)}{z I_0}
\end{align*}
}

Since $H^5=0$, we have
$$S_5^* = S^*(z)(H^5)=0.$$
On the other hand, by symmetry of the quantum product (see Section~\ref{Ikrelations} for more details):
\begin{align*}
  I_{5,5;a} = &\frac{1}{5} (1-I_{4,4}) \\
  I_{5,5;b} = &\frac{1}{25} (1-I_{3,3})-\frac{1}{5}{I_{4,4;a}}\\
  I_{5,5;c} = &\frac{1}{125} (1-I_{2,2})-\frac{1}{25}{I_{3,3;a}}-\frac{1}{5}{I_{4,4;b}}\\
  I_{5,5;d} = &\frac{1}{625} (1-I_{1,1})-\frac{1}{125}{I_{2,2;a}}-\frac{1}{25}{I_{3,3;b}}-\frac{1}{5}{I_{4,4;c}}\\
  I_{5,5;e} = &\frac{1}{3125} (1-I_{0})-\frac{1}{625}{I_{1,1;a}}-\frac{1}{125}{I_{2,2;b}}-\frac{1}{25}{I_{3,3;c}}-\frac{1}{5}{I_{4,4;d}} .
\end{align*}
Hence by replacing the first row of the matrix with the first row plus the sum of all the $(k+1)$-th row multiplied by $\frac{t^k}{5^k}I_{5-k,5-k}$ for $k=1,2,3,4$, we have
{\small
\begin{align*}
S_5^*
=\,&  \det\begin{pmatrix}
 {D_H-\frac{1}{5}\t}  & \frac{\t}{5}  D_H- \frac{\t^2 }{25}  & \frac{\t^2}{25}  D_H- \frac{\t^3 }{125}  & \frac{ \t^3 }{125} D_H- \frac{\t^4 }{625} &    \frac{\t^4}{625}   D_H-\frac{\t^5}{3125} (1-I_{0}) \\
-1&\frac{D_H-I_{4,4;a}\t}{I_{4,4}} & -\frac{I_{4,4;b}}{I_{4,4}}\t^2& -\frac{I_{4,4;c}}{I_{4,4}}\t^3& -\frac{I_{4,4;d}}{I_{4,4}}\t^4 \\
&-1&\frac{D_H-I_{3,3;a}\t}{I_{3,3}} & -\frac{I_{3,3;b}}{I_{3,3}}\t^2& -\frac{I_{3,3;c}}{I_{3,3}}\t^3 \\
&&-1 & \frac{D_H-I_{2,2;a}\t}{I_{2,2}} & -\frac{I_{2,2;b}}{I_{2,2}}\t^2  \\
&&&-1& \frac{D_H-I_{1,1;a}\t}{I_{1,1}} \end{pmatrix}\frac{I(z)}{z I_0}
\end{align*}
}%
Note that, in general, the determinant will change when we perform a
row transformation for a matrix containing the operator $D_H$.
However in this case, it does not since there are only constant terms
$-1$ under the diagonal.
By Proposition~\ref{birkhoff}, we obtain
$$
0 = S_5^* = \Big( {D_H-\frac{1}{5}\t}\Big) \Big( S_4^*+\frac{\t}{ 5} S_3^*+\frac{\t^2}{ 25} S_2^*+\frac{\t^3}{125} S_1^*+\frac{\t^4}{625} S_0^*\Big)+ \frac{\t^5}{3125} \frac{I(z)}{z}=0
$$
where $S_k:= S(z)^*(H^k)$ for $k=0,1,2,3,4$.
This proves the first statement of the lemma.

In particular, letting $z=\t-5H$, we have $D_H = H+(\t-5H)q\frac{d}{dq}$, and the equation becomes
$$
  ( \t-5H)\left(-\frac{1}{5}+ q\frac{d}{dq}\right)   S^* (\t-5H)(\tilde H_4 ) + \frac{1}{5} =0 .
$$
We can solve this equation using the initial condition
$$
  S^* (z)(H^k ) |_{q=0}= H^k .
$$
It implies the second statement of the lemma.
The third can be proved similarly.
\end{proof}

\subsection{$\Psi$-matrix and $R$-matrix: computations of $\Psi \mathbf 1$ and  $R^* \mathbf 1$}

The twisted theory of $\bP^4$ is semisimple\footnote{Semisimplicity
  will follow from the computations in this section.}, in the sense
that there exist idempotents $e_\alpha$ with respect to the quantum
product (at $\bt = \tau(q)$):
$$
e_\alpha  *_\tau e_\beta  = \delta_{\alpha\beta} e_\alpha
$$
In addition, we recall the definition of the normalized canonical
basis
$$
\bar e_\alpha  :=  \Delta_\alpha^{\frac{1}{2}}  e_\alpha , \qquad
\Delta_\alpha ^{-1}: = (e_\alpha , e_\alpha )^{t}.
$$

By results of Dubrovin and Givental \cite{Du96,Gi98,Gi01a}, there
exists an asymptotic fundamental solution of the quantum differential
equation \eqref{QDE} which has the following form
\begin{equation} \label{funsol}
  \tilde S(\bt,z) =  \Psi^{-1}(\bt) R(\bt, z) e^{U(\bt)/z}  ,
\end{equation}
where $\Psi^{-1}$ is the change of basis from a flat basis to the
normalized canonical basis, $U$ is a diagonal matrix with entries the
canonical coordinates
$$U=\diag(u^0,u^1,\dotsc,u^4)$$
and $R(\bt, z) = 1 + O(z)$ is a matrix of formal power series in $z$.
While there is no direct relation between $\tilde S$ and the
$S$-matrix defined by two point correlators in
Section~\ref{sec:twist}, in the proof of Lemma~\ref{asymptoticofI2} we
will discuss a relation between their fully equivariant
generalizations.

We rewrite the fundamental solution in coordinates as
\begin{equation}\label{asymptoticofS}
  \tilde S_{i\alpha}(z) =   R_{i \bar \alpha}(z) e^{u^\alpha /z}  =\sum_\beta \Psi_{i \bar\beta}  R_{\bar \beta \bar \alpha}(z) e^{u^\alpha /z}  .
\end{equation}
where, viewing $\tilde S$ as a linear transformation from
$H^*_{\bC^*}(\bP^4)$ with basis $\{\bar e_\alpha\}$ to
$H^*_{\bC^*}(\bP^4)$ with a flat basis, we write
$\tilde S_{i\alpha}(z):= (H^i,\tilde S(z) \bar e_\alpha)$, and
where, viewing $R$ as a linear transformation written in the
normalized canonical basis, we set
$$
\Psi_{i \bar\beta}  :=  (H^i, \bar e_\beta)^\t,\quad
R_{\bar\alpha \bar\beta}(z):=  (\bar e_\beta , R(z)  \bar e_\alpha)^\t,\quad
R_{i \bar \beta}(z):=  (H^i , R(z)  \bar e_\beta)^\t .
$$

\begin{proposition}
  \label{prop:R1asym}
Let $R(z) = 1+R_1 z+R_2 z^2+\cdots$, and
$$
{{(R_k)_{i}}^{\alpha}} := (H^i,  R_k  \, e^\alpha )^\t .
$$
Then, we have
\begin{equation*}
  \Psi_{0 \bar \alpha} =\,  \frac{1+\q_\alpha}{I_0 \cdot \q_\alpha ^2} \cdot \left(\frac{-t}5\right)^{-\frac 32}
\end{equation*}
where $\q_\alpha =  -5\xi^\alpha q^{\frac{1}{5}}$ and {\small
\begin{equation}\label{R1matrix}
 {{(R_1)}_0}^\alpha = \frac{ 1 -\frac{1}{12} \q_\alpha}{\t\cdot  \q_\alpha },\,
  {(R_2)_0}^\alpha = \frac{1 -\frac{13}{12} \q_\alpha -\frac{287}{288} \q_\alpha^2}{\t^2 \cdot \q_\alpha^{2 }},\,  {(R_3)_0}^\alpha= \frac{ \frac{2}{5}-\frac{293}{60}\q_\alpha +\frac{5347}{ 1440}\q_\alpha^2 +\frac{5039}{10368} \q_\alpha^3}{ \q_\alpha^{3 }} .
%\quad \cdots
\end{equation}}
\end{proposition}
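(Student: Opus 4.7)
The plan is to extract the first row of the $\Psi$- and $R$-matrices by performing a formal saddle-point / WKB expansion of the hypergeometric $I$-function. By Proposition~\ref{birkhoff} one has $S^*(z)(\mathbf{1}) = I(\t,q,z)/(zI_0)$, while the asymptotic fundamental solution $\tilde S(z) = \Psi^{-1} R(z)\, e^{U/z}$ is another solution of the same quantum differential equation \eqref{QDE}. The asymptotic relation between $I(\t,q,z)$ and the product $\Psi^{-1} R\, e^{U/z}$ (whose fully equivariant form is discussed in the forthcoming Lemma~\ref{asymptoticofI2}) reduces the proposition to computing the $z\to 0$ asymptotic expansion of $I(\t,q,z)$ along each of its five critical branches and identifying the coefficients with the entries $\Psi_{0\bar\alpha}$ and $(R_k)_0{}^\alpha$.

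To carry out this expansion, I would plug the ansatz $e^{u^\alpha/z}\sum_{k\ge 0} a_k(q)\,z^k$ into the Picard--Fuchs equation \eqref{PFequation} and match powers of $z$. The leading ($z^0$) equation is algebraic and fixes $du^\alpha/du$; its five solutions are labelled by the fifth roots of unity and produce the natural asymptotic variable $\q_\alpha = -5\xi^\alpha q^{1/5}$. The next ($z^1$) equation is a first-order linear ODE determining $a_0(q) = \Psi_{0\bar\alpha}$ up to an integration constant, pinned down by the small-$q$ behavior of $\Psi$ coming from the semisimple classical cohomology of $\bP^4$ with its twisted pairing. This yields the stated formula with the $(-\t/5)^{-3/2}$ Gaussian prefactor and the rational factor $(1+\q_\alpha)/(I_0\,\q_\alpha^2)$. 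Each subsequent order $z^{k+1}$ produces a first-order linear ODE for $a_k$ whose source involves only $a_0,\ldots,a_{k-1}$; solving these for $k=1,2,3$ and dividing out by $\Psi_{0\bar\alpha}$ gives the three formulas in \eqref{R1matrix}.

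The main obstacle is organizing the computation so that the $\t$-dependence collapses into the clean closed forms claimed, and so that the nontrivial rational coefficients (notably $\tfrac{5039}{10368}$ in $(R_3)_0{}^\alpha$) emerge transparently. Working directly with the fifth-order Picard--Fuchs recursion is unwieldy: each order mixes five derivatives and generates many spurious $\t$-dependent terms that must cancel. A much cleaner route is to exploit Lemma~\ref{Hdual4}, which provides the first-order ODE $(D_H - \t/5)\,S^*(z)(\tilde H_4) = -I(z)/5$; inverting $D_H - \t/5$ branch-by-branch (a purely algebraic operation once $e^{u^\alpha/z}$ is factored out) converts the higher-order recursion for $I$ into an algebraic recursion whose coefficients can be read off order-by-order and matched directly against the claimed formulas. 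This trades bookkeeping on a fifth-order recursion for a first-order one, at the cost of carrying the explicit coefficients of $\tilde H_4$, which are immediate.
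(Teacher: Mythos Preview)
Your overall strategy---substituting the ansatz $e^{u^\alpha/z}\sum_{k\ge 0}a_k(q)z^k$ into the Picard--Fuchs equation and reading off $u^\alpha$, $\Psi_{0\bar\alpha}$, and then $(R_k)_0{}^\alpha$ order by order in $z$---is exactly what the paper does (Lemma~\ref{asymptoticofI}). However, there is a genuine gap at the step where you fix the integration constants. You say the constant in $a_0$ is ``pinned down by the small-$q$ behavior of $\Psi$ coming from the semisimple classical cohomology of $\bP^4$ with its twisted pairing,'' but in the theory at hand (only the twisting parameter $t$, no $(\bC^*)^5$-equivariance on $\bP^4$) the classical cohomology is \emph{not} semisimple, and indeed every formula you are aiming for is singular at $q=0$ since $\q_\alpha\to 0$. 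So there is no honest initial condition available in this theory. The paper confronts this explicitly: it deforms to the fully $(\bC^*)^5$-equivariant twisted theory (with parameters $\lambda_i=\xi^i\lambda$), where $q=0$ \emph{is} semisimple, uses the standard Givental initial condition $R(z)|_{q=0}=C(\lambda,z)^{-1}\Gamma(t,z)^{-1}$ there to determine the constants, and only then sends $\lambda\to 0$. Your one-line reference to Lemma~\ref{asymptoticofI2} in the first paragraph does not substitute for this argument; you need to say that the constants $C_\alpha$ and $c_{k\alpha}$ are fixed in the deformed theory and survive the limit.

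Your proposed shortcut via Lemma~\ref{Hdual4} is also off target. The first-order equation $(D_H-\t/5)S^*(z)(\tilde H_4)=-I(z)/5$ becomes, after passing to the asymptotic basis, a relation between the $\tilde H_4$-row $R^4{}_{\bar\alpha}(z)$ and the $\mathbf 1$-row $R_{0\bar\alpha}(z)$ (this is equation~\eqref{equationofR4} in the paper). It does not reduce the computation of $R_{0\bar\alpha}$ itself to a first-order recursion; rather, the paper uses it in the opposite direction, feeding in the already-computed $(R_k)_0{}^\alpha$ from Proposition~\ref{prop:R1asym} to obtain the $(R_k)^{4,\alpha}$. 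To get the $\mathbf 1$-row you still have to work with the fifth-order Picard--Fuchs operator as in Lemma~\ref{asymptoticofI}, and in practice the bookkeeping (equation~\eqref{picardfuchsforR}) is not as bad as you suggest: at each order one solves a single first-order linear ODE in $q$ whose homogeneous part is determined by $L_\alpha$.
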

\begin{proof}
  By the following Lemma~\ref{asymptoticofI2}, the functions
  \begin{equation} \label{asymptoticofIeq}
    \tilde I_\alpha(q,z) = e^{u^\alpha /z} I_0   R_{0\bar \alpha}(z)  \quad \forall \alpha
  \end{equation}
  are solutions of the Picard--Fuchs equation
  \begin{equation*}
    \Big(D^5 -  q \prod_{k=1}^5 (5D + kz - \t)\Big)   I (q,z) = 0.
  \end{equation*}

  The proposition then follows from the following asymptotic expansion
  of $\tilde I_\alpha$ (Lemma \ref{asymptoticofI} and
  \ref{asymptoticofI2}).
\end{proof}
\begin{lemma} \label{asymptoticofI}
  There exist constants $C_\alpha$, $c_{1\alpha}, c_{2\alpha}, \dotsc$ such that
\begin{align*}
\tilde I_\alpha(z) = & C_\alpha \cdot \frac{1+ \,  \q_\alpha }{ \q_\alpha^2} \Big(  1 + \frac{ 1+c_{1\alpha} \q_\alpha}{ \q_\alpha } \cdot {\t}^{-1}{z} +  \frac{1+(c_{1\alpha}-1)\q_\alpha+c_{2\alpha} \q_\alpha^2}{\q_\alpha^{2 }}\cdot {\t}^{-2}{z}^2  \\&\quad +  \frac{ \frac{2}{5}+(c_{1\alpha}-\frac{24}{5})\q_\alpha+(c_{2\alpha}-c_{1\alpha}-\frac{14}{5})\q_\alpha^2 +c_{3\alpha} \q_\alpha^3}{ \q_\alpha^{3 }}\cdot {\t}^{-3}{z}^3+\cdots\Big) e^{ u^\alpha/z}
\end{align*}
where $u^\alpha$ satisfies
\begin{equation*}
  q \frac d{dq} u^\alpha =  L_\alpha := \frac{ \frac{\t }{5}\q_\alpha}{1 + \, \q_\alpha}.
\end{equation*}
\end{lemma}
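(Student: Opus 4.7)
The plan is a WKB-style asymptotic analysis of the Picard--Fuchs operator
\[
\mathcal P := D^5 - q \prod_{k=1}^5 (5D + kz - \t), \qquad D = z q \tfrac{d}{dq}.
\]
Substituting the ansatz $\tilde I_\alpha = e^{u(q)/z}\sum_{k\ge 0} f_k(q)\, z^k$ and using the identity $e^{-u/z} D\, e^{u/z} = L + D$ with $L := q\,du/dq$, the equation $\mathcal P \tilde I_\alpha = 0$ becomes
\[
\Big( (L+D)^5 - q \prod_{k=1}^5 \big( 5(L+D) + kz - \t \big) \Big) \sum_{k\ge 0} f_k z^k = 0.
\]
Since each application of $D$ to an $f_k$ produces an extra factor of $z$, this expands into one equation at every order in $z$.

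At order $z^0$ the coefficient equation reduces to the eikonal condition $L^5 = q(5L-\t)^5$. With $\q_\alpha = -5\xi^\alpha q^{1/5}$ one checks directly that $L_\alpha = (\t/5)\q_\alpha/(1+\q_\alpha)$ is a solution, and the five choices of fifth root of unity give all five branches. Integrating $q\,du^\alpha/dq = L_\alpha$ then defines $u^\alpha$ up to an additive constant, which proves the second assertion of the lemma.

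At order $z^{k+1}$ for $k\ge 0$, the equation takes the form $\mathcal T f_k = g_k$, where $\mathcal T$ is a $k$-independent first-order linear differential operator in $q$ (the \emph{transport operator}, essentially proportional to $5L_\alpha^4\, q\,d/dq$ minus the multiplication by a specific derivative of the eikonal polynomial), and $g_k$ is an explicit linear combination of $f_0, \dotsc, f_{k-1}$ and their derivatives. The operator $\mathcal T$ has a one-dimensional kernel. Solving the homogeneous case ($k=0$) in the variable $\q_\alpha$, via the substitution $q\,d/dq = (\q_\alpha/5)\,d/d\q_\alpha$, integrates in closed form and yields $f_0 = C_\alpha\,(1+\q_\alpha)/\q_\alpha^2$ with $C_\alpha$ the first integration constant. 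Variation of parameters then produces at each subsequent order a particular solution plus a free scalar multiple of $f_0$, contributing the new integration constant $c_{k\alpha}$.

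The main technical obstacle is the concrete expansion of $(L_\alpha + D)^5 - q\prod_{k=1}^5 (5(L_\alpha+D) + kz - \t)$ through order $z^4$, and the careful handling of the non-commutativity $[D, L_\alpha] = zq\,L_\alpha'$ which generates the inhomogeneous source terms responsible for the apparently non-trivial coefficient shifts (the explicit constants $\tfrac{2}{5}$, $-\tfrac{24}{5}$, and the $-1$ shift accompanying $c_{1\alpha}$) that appear in the stated Laurent expansion. Once these transport ODEs are written out, each integrates elementarily in $\q_\alpha$, and reading off the leading Laurent behavior at $\q_\alpha = 0$ yields exactly the asserted form, completing the proof.
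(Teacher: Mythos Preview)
Your proposal is correct and follows essentially the same route as the paper: conjugate the Picard--Fuchs operator by $e^{u/z}$, read off the eikonal equation at order $z^0$ to obtain $L_\alpha$, and then solve the resulting first-order transport ODE at each successive order in $z$, introducing one integration constant $c_{k\alpha}$ per step. Your presentation is somewhat more explicit in naming the transport operator and invoking variation of parameters, but the underlying computation is identical to the paper's terse ``write down the equation carefully and solve for the $z^k$-coefficients one by one.''
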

\begin{proof}
  By the arguments in the above proposition, applying the
  Picard--Fuchs equation to \eqref{asymptoticofIeq}, we can see that
  $R_{0\bar \alpha}(z)$ satisfies the following equation
  \begin{equation} \label{picardfuchsforR}
    \Big(  D_\alpha^5 -  q \prod_{k=1}^5 (5D_\alpha +kz-\t) \Big)  I_0  R_{0\bar \alpha}(z) = 0,
  \end{equation}
  where $D_\alpha := D + q\frac{d}{dq} u^\alpha$.
  After writing down this equation carefully, we can first solve
  $q \frac d{dq} u^\alpha$ by looking at the coefficient of $z^0$ of
  \eqref{picardfuchsforR}.
  After that, we can determine the coefficients of $z^k$ in
  $R_{0\bar \alpha}(z)$ one by one.
  The coefficient of $z^1$ in \eqref{picardfuchsforR} determines the
  coefficient of $z^0$ in $R_{0\bar \alpha}(z)$, and hence
  $\Psi_{0\bar\alpha}$ up to a constant $C_\alpha$.
  At each step, we need to introduce a new undetermined constant
  $c_{k\alpha}$.
\end{proof}

We cannot directly fix the constant terms of the $R$-matrix because of
the poles of the entries of the $R$-matrix.
The idea to solve this problem is to consider a more general
equivariant theory first, and then take the limit.
We will do so in the following lemma.

\begin{lemma}
  \label{asymptoticofI2}
  The functions
  $\tilde I_\alpha = e^{u^\alpha /z} I_0 R_{0\bar \alpha}(z)$ are
  solutions of Picard--Fuchs equation
  \begin{equation*}
    \Big(D^5 -  q \prod_{k=1}^5 (5D + kz - \t)\Big)   I (q,z) = 0.
  \end{equation*}
  Furthermore, the constants $C_\alpha$ and $c_{i\alpha}$ for
  $i = 1, 2, 3$ in Lemma~\ref{asymptoticofI} are independent of
  $\alpha$, and have the following values
  \begin{equation*}
    C_\alpha = \left(\frac{-t}5\right)^{-\frac 32}, \qquad
    c_{1\alpha} =-\frac{1}{12} ,\qquad
    c_{2\alpha} =-\frac{287}{288} ,\qquad
    c_{3\alpha} = \frac{5039}{10368}.
  \end{equation*}
\end{lemma}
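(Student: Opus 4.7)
The idea is to lift the problem to the fully $(\bC^*)^5$-equivariant twisted theory of $\bP^4$, where semisimplicity is manifest at the torus-fixed points and the constants $C_\alpha$, $c_{i\alpha}$ are rigidified before taking the non-equivariant limit. Concretely, introduce equivariant parameters $\lambda_0,\dotsc,\lambda_4$ on $\bP^4$ and consider the fully equivariant twisted $I$-function
\begin{equation*}
  I^{eq}(q,z) = z \sum_{d \geq 0} q^d \, \frac{\prod_{j=1}^{5d}(5H + jz - \t)}{\prod_{i=0}^{4} \prod_{k=1}^{d} (H - \lambda_i + kz)},
\end{equation*}
which satisfies the equivariant Picard--Fuchs system
\begin{equation*}
  \Bigl(\prod_{i=0}^{4}(D_H - \lambda_i) - q \prod_{k=1}^{5}(5 D_H + kz - \t)\Bigr) I^{eq}(q,z) = 0,
\end{equation*}
specializing to \eqref{PFequation} as $\lambda \to 0$. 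The equivariant theory is semisimple already at $q=0$, with five canonical idempotents $e_\alpha^{eq}$ indexed by the $(\bC^*)^5$-fixed points of $\bP^4$ and canonical coordinates $u_{eq}^\alpha$ which specialize to the $u^\alpha$ in the lemma. The asymptotic factorization \eqref{funsol} therefore has an equivariant refinement $(\Psi^{eq})^{-1} R^{eq} e^{U^{eq}/z}$ over the ring of Laurent series in $\lambda$.

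Next, at each fixed point $\alpha$ the equivariant analogue of Lemma \ref{asymptoticofI} produces an asymptotic solution $\tilde I^{eq}_\alpha = e^{u^\alpha_{eq}/z} I_0^{eq} R^{eq}_{0\bar\alpha}(z)$ to the equivariant Picard--Fuchs system. The key point is that in the equivariant setting these constants are no longer free: the equivariant Birkhoff factorization of Proposition \ref{birkhoff} applies verbatim (with $H - \lambda_i$ replacing $H$ in the denominators) and delivers the entries of $\tilde S^{eq}$ directly from derivatives of $I^{eq}$ at the fixed points. Matching this with the canonical decomposition $(\Psi^{eq})^{-1} R^{eq} e^{U^{eq}/z}$ pins down $C_\alpha(\lambda)$ and $c_{k\alpha}(\lambda)$ in closed form as explicit rational expressions in $\lambda_i$ and $\t$; equivalently, one can view this as a saddle-point / stationary-phase expansion of the hypergeometric $I^{eq}$ along the cycle through the $\alpha$-th fixed point, i.e.\ Iritani's $\hat\Gamma$-integral structure.

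Finally, I would take the non-equivariant limit $\lambda_i \to 0$. In this limit the five equivariant saddles collide into the Galois orbit $\q_\alpha = -5\xi^\alpha q^{1/5}$, and the $\mathbb{Z}/5$-symmetry $q^{1/5}\mapsto \xi q^{1/5}$ permutes the five resulting asymptotic solutions transitively. Because each $C_\alpha$ and $c_{k\alpha}$ must respect this symmetry, they become independent of $\alpha$ and depend only on $\t$. The explicit values $C_\alpha = (-\t/5)^{-3/2}$, $c_{1\alpha}=-\tfrac{1}{12}$, $c_{2\alpha}=-\tfrac{287}{288}$, $c_{3\alpha}=\tfrac{5039}{10368}$ then drop out of the stationary-phase expansion.

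The principal obstacle is bookkeeping: one must extract the first four orders of the $\hat\Gamma$-type asymptotic expansion of $I^{eq}$ near a fixed point (where the numerator factors $5H+jz-\t$ and the denominator factors $H-\lambda_i+kz$ interact with the saddle equation $q\frac{d}{dq}u^\alpha = L_\alpha$) and track the cancellations carefully enough to produce the exact rational constants. Checking compatibility of the limit $\lambda\to 0$ with the collision of saddles---so that no spurious factors appear from poles---is what makes the equivariant detour worthwhile, since in the non-equivariant theory Lemma \ref{asymptoticofI} alone leaves the $c_{k\alpha}$ genuinely undetermined.
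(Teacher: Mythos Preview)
Your approach is essentially the same as the paper's: lift to the fully equivariant twisted theory of $\bP^4$, use that semisimplicity persists at $q=0$ there so the $R$-matrix has an explicit initial value, solve the equivariant Picard--Fuchs recursion order by order with that initial condition, and then let $\lambda\to 0$.

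A few differences of emphasis are worth flagging. The paper makes the mechanism that rigidifies the constants completely explicit: in the equivariant setting one has
\[
  S(\t,\lambda,z)\,(\Psi^{-1}|_{q=0})\,\Gamma^{-1}(\t,z)\,C^{-1}(\lambda,z)
  = \Psi^{-1} R(\t,\lambda,z)\, e^{U/z},
\]
with $C$ and $\Gamma$ given by the Givental/Coates--Givental Bernoulli-number exponentials. This identity does double duty: it is what proves that $\tilde I_\alpha = e^{u^\alpha/z} I_0 R_{0\bar\alpha}(z)$ actually solves the Picard--Fuchs equation (the first assertion of the lemma), and it supplies the closed-form initial condition
\[
  {R(z)_0}^{\alpha}\big|_{q=0}
  = \exp\!\Big(\!-\tfrac{1}{12}\big(\tfrac{1}{\t-5\xi^\alpha\lambda}+\tfrac{2}{\xi^\alpha\lambda}\big)z
  +\tfrac{1}{360}\big(\tfrac{1}{(\t-5\xi^\alpha\lambda)^3}-\tfrac{1}{(\xi^\alpha\lambda)^3}\big)z^3\Big)+O(z^4)
\]
from which the numbers $-\tfrac{1}{12}$, $-\tfrac{287}{288}$, $\tfrac{5039}{10368}$ are read off after solving the recursion and sending $\lambda\to 0$. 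You invoke the same content under the name ``$\widehat\Gamma$-integral structure / stationary phase,'' which is fine, but your outline glosses over the distinction between the genuine $S$-matrix and the asymptotic fundamental solution $\Psi^{-1}Re^{U/z}$; it is precisely the constant matrices $C$ and $\Gamma$ relating them that carry the numerical information, and you should name them rather than leave the matching step as ``pins down $C_\alpha(\lambda)$ and $c_{k\alpha}(\lambda)$.'' Your $\mathbb Z/5$-symmetry argument for $\alpha$-independence is a pleasant shortcut the paper does not spell out (there it falls out of the explicit $\lambda\to 0$ limit), but note it only gives independence, not the values, so the Bernoulli-number input is still indispensable.
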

\begin{proof}
  There is a standard method of fixing the constants of the $R$-matrix
  in equivariant Gromov--Witten theory using an explicit formula for
  the $R$-matrix when the Novikov parameters are sent to zero.
  Unfortunately, it does not directly apply to the twisted theory that
  we are considering since we do not work equivariantly on $\bP^4$,
  and the theory hence becomes non-semisimple at $q = 0$.
  To solve this problem, we first introduce a more general equivariant
  theory, and then take a limit to recover the original theory.

  We introduce the $(\mathbb C^*)^5$-action which acts diagonally on
  the base $\bP^4$, and we denote by $\lambda_i$ the corresponding
  equivariant parameters.
  To simplify the computation, we set $\lambda_i = \xi^i \lambda$.
  We consider the corresponding twisted theory of $\bP^4$, which has
  the $I$-function
  \begin{equation*}
    I(\t,\lambda, q,z) =  z \, \sum_{d\geq 0 } q^d  \frac{\prod_{j=1}^{5d}  (5H + jz -\t)}{ \prod_{k=1}^d ((H+k z)^5 - \lambda^5) },
  \end{equation*}
  which satisfies the Picard--Fuchs equation
  \begin{equation}
    \label{PFeq}
    \Big( D_H^5 -\lambda^5 -  q \prod_{k=1}^5 (5D_H+kz-\t) \Big)  I(\t,\lambda,q,z) = 0.
  \end{equation}
  Similarly to the previous discussion, we introduce the mirror map
  $\tau(q)$, the $S$- and $R$-matrix
  \begin{equation*}
    S(\t,\lambda,z), \qquad R(\t,\lambda,z)
  \end{equation*}
  at point $\tau(q)$, the normalized canonical basis, \dots.
  It is clear that by taking $\lambda \rightarrow 0$ we recover the
  twisted theory considered in this paper.

  One main advantage of the more general twisted theory is that it
  stays semisimple at $q = 0$ because the classical equivariant
  cohomology of $\bP^4$ has a basis of idempotents:
  \begin{equation*}
    e_\alpha|_{q = 0}
    = \frac{\prod_{\beta \neq \alpha} (H - \xi^\beta\lambda)}{\prod_{\beta \neq \alpha} (\xi^\alpha\lambda - \xi^\beta\lambda)}
    = \frac{\prod_{\beta \neq \alpha} (H - \xi^\beta\lambda)}{5 \xi^{4\alpha} \lambda^4}
  \end{equation*}
  Hence, the normalized canonical basis stays well-defined at $q = 0$,
  and we have
  \begin{equation*}
    \bar e_\alpha|_{q = 0}
    = \frac{\prod_{\beta \neq \alpha} (H - \xi^\beta\lambda)}{\sqrt{5(5\lambda^5 - t \xi^{4\alpha} \lambda^4)}},
  \end{equation*}
  and
  \begin{equation}
    \label{eq:Psiq0}
    \Psi_{0\bar\alpha}|_{q = 0}
    = \frac{-t + 5\xi^\alpha \lambda}{\sqrt{5(5\lambda^5 - t \xi^{4\alpha} \lambda^4)}}.
  \end{equation}
  By uniqueness of fundamental solutions of the quantum differential
  equation (see also \cite{Gi01a, CoGi07}), we have
  \begin{equation}
    \label{asympeqS}
    S(\t,\lambda,z) (\Psi(\t,\lambda)^{-1}|_{q = 0}) \Gamma^{-1}(\t,z) C^{-1}(\lambda,z) = \Psi(\t,\lambda)^{-1} R(\t,\lambda,z) e^{U(\t,\lambda)/z}
  \end{equation}
  for constant matrices (with respect to $q$) given by
  \begin{equation} \label{constantR} \begin{split}
      C(\lambda,z) = \,& \diag \big( \big\{ e^{ \sum_{k>0,j\neq i} \frac{B_{2k}}{2k({2k-1})}\frac{z^{2k-1}}{(\lambda_i-\lambda_j)^{2k-1}} } \big\}_{i =0,1,2,3,4} \big),\\
      \Gamma(t,z) = \,&e^{ \sum_{k>0} \frac{B_{2k}}{2k({2k-1})}\frac{z^{2k-1}}{(t-5H)^{2k-1}} ,}\end{split}
  \end{equation}
  Indeed, these constant matrices together give exactly the constant
  term of the $R$-matrix.

  Notice that the $\bar e_\alpha|_{q = 0}$ are eigenvectors for the
  eigenvalues $h_\alpha$ of the classical multiplication by $H$.
  Hence, by the Picard--Fuchs equation \eqref{PFeq} for
  $I(t, \lambda, q, z) = z I_0(q) \cdot S(t,\lambda,z)^* \mathbf 1$,
  the identity \eqref{asympeqS}, and the fact that $\Gamma(t, z)$ and
  $C(\lambda, z)$ are diagonal, we see that
  \begin{equation*}
    \tilde I_\alpha(t,\lambda,q,z) = e^{u^\alpha(q)/z} I_0(q)  R_{0\bar \alpha}(z)
  \end{equation*}
  satisfies the Picard--Fuchs equation
  \begin{equation} \label{PFequationlambda}
    \Big( (D + h_\alpha)^5 -\lambda^5 -  q \prod_{k=1}^5 (5D + 5h_\alpha +kz-\t) \Big)  \tilde I_\alpha(\t,\lambda,q,z) = 0.
  \end{equation}
  At the limit $t = 0$, this proves the first part of the lemma.

  Now we can use a similar method as in the proof of
  Lemma~\ref{asymptoticofI} to compute the $R$-matrix.
  By the definition of $\tilde I_\alpha(t,\lambda,q,z)$, we have the
  following Picard--Fuchs equation
  \begin{equation*}
    \Big( (D+L_\alpha)^5 -\lambda^5 -  q \prod_{k=1}^5 (5 D+ 5 L_\alpha +kz-\t) \Big)  I_0(q) R_{ 0 \bar \alpha}(\t,q,z) =0 .
  \end{equation*}
  where $L_\alpha = h_\alpha + q\frac{d}{dq} u^\alpha$.
  The coefficient of $z^0$ gives us an equation for $L_\alpha$:
  \begin{equation}\label{qL}
    q (5 L_\alpha-t)^5 +(L_\alpha^5-\lambda^5) = 0 .
  \end{equation}
  We can choose the basis $\bar e_\alpha|_{q = 0}$ such that the
  solution $L_\alpha$ satisfies the following
  \begin{equation*}
    L_\alpha = \xi^\alpha \lambda +O(q),\quad \text{for} \quad  \alpha = 0,1,2,3,4,
  \end{equation*}
  and from here we see that $h_\alpha = \xi^\alpha \lambda$.
  Then, the coefficient of $z^1$ of the Picard--Fuchs equation and the
  initial condition \eqref{eq:Psiq0} imply that
  \begin{equation*}
    I_0(q) \cdot \Psi_{0\bar \alpha} = \frac{ 5 L_\alpha-\t }{\sqrt{5(5 \lambda^5 - L_\alpha^4 \t)}}.
  \end{equation*}
  Next, we look at the coefficients of $z^2$, $z^3$ and $z^4$ of the equation. By equation \eqref{qL} we have
  \begin{equation*}
    q\frac{d}{dq} L_\alpha =-{\frac { \left( 5\,L_\alpha-\t \right)  \left( {L_\alpha}^{5}-{\lambda}^{5} \right)
}{5\,{L_\alpha}^{4}\t-25\,{\lambda}^{5}}}.
  \end{equation*}
  Using this relation, we can solve $R_1,R_2,R_3$ inductively as
  rational functions of $L_\alpha$.
  The explicit formulae are: {\tiny
\begin{align*}
{(R_1)_{0}} ^{ \alpha} = &\,\frac{1}{(L_\alpha^4 \t-5 \lambda^5)^3}\bigg(-{\frac {13\,L_\alpha^{12}{\t}^{2}}{12}}+{\frac {{\t}^{3}L_\alpha^{11
}}{5}}+ \Big( -15\,\t\,L_\alpha^{8}+5\,{\t}^{2}L_\alpha^{7}-{\frac {
11\,{\t}^{3}L_\alpha^{6}}{30}} \Big) {\lambda}^{5} + \Big( {\frac {75\,{L_\alpha}
^{4}}{4}}+{\frac {10\,L_\alpha^{3}\t}{3}}-{\frac {3\,{\t}^{2}L_\alpha^
{2}}{2}} \Big) {\lambda}^{10}\bigg)
\\
{(R_2)_{0}} ^{ \alpha} = &\,\frac{1}{(L_\alpha^4 \t-5 \lambda^5)^6}\bigg({\frac {L_\alpha^{22}{\t}^{6}}{25}}-{\frac {37\,L_\alpha^{23}{\t}^{5}
}{60}}+{\frac {313\,L_\alpha^{24}{\t}^{4}}{288}}+ \Big( -{\frac {47\,
L_\alpha^{17}{\t}^{6}}{150}}+{\frac {871\,L_\alpha^{18}{\t}^{5}}{72}}-
{\frac {1181\,L_\alpha^{19}{\t}^{4}}{12}}+{\frac {915\,L_\alpha^{20}{
\t}^{3}}{4}} \Big) {\lambda}^{5}\\&\quad+ \Big( {\frac {517\,L_\alpha^{12}{
\t}^{6}}{1800}}-{\frac {67\,L_\alpha^{13}{\t}^{5}}{3}}+{\frac {
6343\,L_\alpha^{14}{\t}^{4}}{24}}-{\frac {37255\,L_\alpha^{15}{\t}^{3}
}{36}}+{\frac {22875\,L_\alpha^{16}{\t}^{2}}{16}} \Big) {\lambda}^{10}\\&\quad+
 \Big( {\frac {209\,L_\alpha^{8}{\t}^{5}}{20}}-{\frac {3379\,L_\alpha^{9}{
\t}^{4}}{18}}+{\frac {22855\,L_\alpha^{10}{\t}^{3}}{24}}-{\frac {
5625\,L_\alpha^{11}{\t}^{2}}{4}}-{\frac {9375\,L_\alpha^{12}\t}{4}}
 \Big) {\lambda}^{15}\\&\quad+ \Big( {\frac {205\,L_\alpha^{4}{\t}^{4}}{8}}-180\,
L_\alpha^{5}{\t}^{3}+{\frac {4675\,L_\alpha^{6}{\t}^{2}}{72}}+{\frac {
5625\,L_\alpha^{7}\t}{2}} +{\frac {5625\,L_\alpha^{8}}{32}} \Big) {\lambda}^{20}
+ \Big( {\frac {5\,{\t}^{3}}{2}}+25\,L_\alpha{\t}^{2}-{\frac {1375
\,L_\alpha^{2}\t}{2}} \Big) {\lambda}^{25}\bigg)\end{align*}\begin{align*}
{(R_3)_{0}} ^{ \alpha} = &\,\frac{1}{(L_\alpha^4 \t-5 \lambda^5)^9}\bigg(-{\frac {56201\,{\t}^{6}L_\alpha^{36}}{51840}}+{\frac {2089\,{\t
}^{7}L_\alpha^{35}}{1440}}-{\frac {73\,L_\alpha^{34}{\t}^{8}}{300}}+{\frac
{2\,L_\alpha^{33}{\t}^{9}}{625}}+ \Big( -{\frac {163861\,L_\alpha^{32}{
\t}^{5}}{96}}+{\frac {324773\,L_\alpha^{31}{\t}^{6}}{288}}\\&\quad-{
\frac {1876667\,L_\alpha^{30}{\t}^{7}}{8640}}+{\frac {109099\,L_\alpha^{29}
{\t}^{8}}{9000}}+{\frac {161\,L_\alpha^{28}{\t}^{9}}{15000}}
 \Big) {\lambda}^{5}+ \Big( -{\frac {25916335\,L_\alpha^{28}{\t}^{4}}{384
}}+{\frac {10077151\,L_\alpha^{27}{\t}^{5}}{216}}-{\frac {7283627\,{L_\alpha}
^{26}{\t}^{6}}{576}}\\&\quad+{\frac {1339837\,L_\alpha^{25}{\t}^{7}}{900}
}-{\frac {6305489\,L_\alpha^{24}{\t}^{8}}{108000}}-{\frac {2023\,L_\alpha^{
23}{\t}^{9}}{45000}} \Big) {\lambda}^{10}+ \Big( {\frac {22253\,L_\alpha^
{18}{\t}^{9}}{810000}}+{\frac {774907\,L_\alpha^{19}{\t}^{8}}{
9000}}+{\frac {17474431\,L_\alpha^{21}{\t}^{6}}{540}}\\&\quad-{\frac {10554529
\,L_\alpha^{20}{\t}^{7}}{3600}}-{\frac {46053127\,L_\alpha^{22}{\t}^{5
}}{288}}-{\frac {12368975\,L_\alpha^{24}{\t}^{3}}{48}}+{\frac {
17130505\,L_\alpha^{23}{\t}^{4}}{48}} \Big) {\lambda}^{15}+ \Big( -{
\frac {236621\,L_\alpha^{14}{\t}^{8}}{6000}}\\&\quad+{\frac {11464411\,L_\alpha^{15
}{\t}^{7}}{5400}}-{\frac {10970767\,L_\alpha^{16}{\t}^{6}}{360}}+
{\frac {6661633\,L_\alpha^{17}{\t}^{5}}{36}}-{\frac {422734975\,L_\alpha^{
18}{\t}^{4}}{864}}+{\frac {28694675\,L_\alpha^{19}{\t}^{3}}{96}}\\&\quad+
{\frac {72617625\,L_\alpha^{20}{\t}^{2}}{128}} \Big) {\lambda}^{20}+
 \Big( -{\frac {560791\,L_\alpha^{10}{\t}^{7}}{1200}}+{\frac {420693
\,L_\alpha^{11}{\t}^{6}}{40}}-{\frac {34497817\,L_\alpha^{12}{\t}^{5}
}{432}}+{\frac {15774785\,L_\alpha^{13}{\t}^{4}}{72}}+{\frac {8465875
\,L_\alpha^{14}{\t}^{3}}{64}}\\&\quad-{\frac {39623875\,L_\alpha^{15}{\t}^{2}
}{32}}+{\frac {1220625\,L_\alpha^{16}\t}{32}} \Big) {\lambda}^{25}+
 \Big( -{\frac {67837\,L_\alpha^{6}{\t}^{6}}{80}} +{\frac {38689\,L_\alpha^
{7}{\t}^{5}}{4}}-{\frac {1706375\,L_\alpha^{8}{\t}^{4}}{96}}-{
\frac {33813575\,L_\alpha^{9}{\t}^{3}}{162}}\\&\quad+{\frac {152344375\,L_\alpha^{
10}{\t}^{2}}{192}}+{\frac {1704375\,L_\alpha^{11}\t}{16}}-{\frac
{36159375\,L_\alpha^{12}}{128}} \Big) {\lambda}^{30} + \Big( -{\frac {693\,L_\alpha^
{2}{\t}^{5}}{4}}-{\frac {175\,L_\alpha^{3}{\t}^{4}}{2}}+{\frac {
738125\,L_\alpha^{4}{\t}^{3}}{24}}\\&\quad-{\frac {1370375\,L_\alpha^{5}{\t}^{
2}}{12}}-{\frac {4296875\,L_\alpha^{6}\t}{24}}+393750\,L_\alpha^{7}
 \Big) {\lambda}^{35} + \Big( 22500\,L_\alpha\t+{\frac {14375\,{\t}^{2}
}{12}}-{\frac {209375\,L_\alpha^{2}}{2}} \Big) {\lambda}^{40}\bigg)
\end{align*}}
Notice that each time we have a constant to fix, and we fix these
constants by \eqref{constantR}.
To be precise, the constant terms of the above
${(R_k)_{0}} ^{ \alpha} $ are fixed the following initial condition
\begin{align*}
{R(z)_{0}}^{ \alpha} |_{L_\alpha = \xi^\alpha \lambda } =
R(z)_{\bar\alpha \bar \alpha} |_{q=0 }= e^{-\frac{1}{12} (
\frac{1}{\t -5 \xi^\alpha \lambda}+\frac{2}{\xi^\alpha \lambda} ) z +\frac{1}{360}
(\frac{1}{(\t -5 \xi^\alpha \lambda)^3}+\frac{-1}{(\xi^\alpha \lambda)^3}) z^3} +O(z^4)
\end{align*}
where we have used
$$
\sum_{\alpha = 1,2,3,4}  \frac{1}{1-\xi^\alpha} = 2,\quad
\sum_{\alpha = 1,2,3,4}  \frac{1}{(1-\xi^\alpha)^3} =-1 .
$$
Finally, by taking $\lambda =0$ in the above formulae of
$R_k(\t,\lambda)$ (note that in this limit $L_\alpha$ becomes
$\frac{\frac{t}{5}\q_\alpha}{1+\q_\alpha}$ as in
Lemma~\ref{asymptoticofI}), we recover the results of
Lemma~\ref{asymptoticofI}, and obtain the constant terms as well.
\end{proof}

\subsection{$\Psi$-matrix and $R$-matrix: computations of the remaining entries}\label{sec:Rmatrix}

From Proposition~\ref{birkhoff}, we are able to compute the $\Psi$-matrix and $R$-matrix using the asymptotic expansion of Proposition~\ref{prop:R1asym}.

\begin{proposition} \label{psimatrixp}
We have the following formula for  the $\Psi$-matrix:
\begin{align*}
\Psi_{0 \bar \alpha} =\,& \Delta_\alpha^{-\frac{1}{2}}=  \frac{1+\q_\alpha}{I_0 \cdot \q_\alpha ^2}  \cdot \left(\frac{ -\t}{5}\right)^{-\frac{3}{2}} \\
\Psi_{1 \bar \alpha}  =\,& \frac{1}{I_{1,1}}\Big( {L_\alpha-I_{1,1;a} \t} \Big) \frac{1+\q_\alpha}{I_0 \cdot \q_\alpha ^2} \left(\frac{ -\t}{5}\right)^{-\frac{3}{2}}=\det \left( \frac{L_\alpha-I_{1,1;a}\t}{I_{1,1}}\right)  \Psi_{0 \bar \alpha}\\
\Psi_{2 \bar \alpha} =\,&   \det\begin{pmatrix}
\frac{L_\alpha-I_{2,2;a}\t}{I_{2,2}} & -\frac{I_{2,2;b}}{I_{2,2}}\t^2  \\
-1& \frac{L_\alpha-I_{1,1;a}\t}{I_{1,1}} \end{pmatrix} \Psi_{0 \bar \alpha}\\
\Psi_{3 \bar \alpha}
=\,&  \det\begin{pmatrix}
\frac{L_\alpha-I_{3,3;a}\t}{I_{3,3}} & -\frac{I_{3,3;b}}{I_{3,3}}\t^2& -\frac{I_{3,3;c}}{I_{3,3}}\t^3 \\
-1 & \frac{L_\alpha-I_{2,2;a}\t}{I_{2,2}}  & -\frac{I_{2,2;b}}{I_{2,2}}\t^2  \\
&-1& \frac{L_\alpha-I_{1,1;a}\t}{I_{1,1}} \end{pmatrix}\Psi_{0 \bar \alpha}
\end{align*}
\end{proposition}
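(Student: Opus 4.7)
The plan is to deduce these closed formulas from Proposition~\ref{birkhoff} together with the asymptotic analysis already used for $\Psi_{0\bar\alpha}$. The entry $\Psi_{0\bar\alpha}$ is recorded in Proposition~\ref{prop:R1asym}, so I only need to treat $\Psi_{k\bar\alpha}$ for $k = 1, 2, 3$. Writing $R(z) = 1 + R_1 z + \cdots$ and using \eqref{asymptoticofS}, I have $\Psi_{k\bar\alpha} = \lim_{z \to 0} e^{-u^\alpha/z} \tilde S_{k\alpha}(z) = R_{k\bar\alpha}(0)$, so the problem reduces to extracting the leading asymptotic behavior of the $k$-th row of the fundamental solution along the $\alpha$-th canonical direction.

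The Birkhoff formulas in Proposition~\ref{birkhoff} express each $S^*(z)(H^k)$ as a determinantal differential polynomial in $D_H$ applied to $I(z)/(zI_0)$. The point is that on the $\alpha$-th asymptotic piece $\tilde I_\alpha(z) = e^{u^\alpha/z} I_0 R_{0\bar\alpha}(z)$ supplied by Lemma~\ref{asymptoticofI2}, the operator $D_H$ obeys
\[
D_H\bigl(e^{u^\alpha/z} g(q)\bigr) = e^{u^\alpha/z}\bigl((H + L_\alpha)\,g(q) + z\,q\tfrac{d}{dq} g(q)\bigr),
\]
where $L_\alpha = q\tfrac{d}{dq}u^\alpha$. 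In the non-equivariant limit, $H$ acts nilpotently on cohomology, and the $z\,q\tfrac{d}{dq}$-term is subleading as $z \to 0$. Hence, to leading order in $z$ and on the $\alpha$-th canonical piece, $D_H$ reduces to multiplication by $L_\alpha$, while the factor $I(z)/(zI_0)$ contributes $\Psi_{0\bar\alpha}$. Substituting $D_H \mapsto L_\alpha$ and $I(z)/(zI_0) \mapsto \Psi_{0\bar\alpha}$ in each Birkhoff determinant of Proposition~\ref{birkhoff} produces exactly the stated formulas for $\Psi_{1\bar\alpha}, \Psi_{2\bar\alpha}$ and $\Psi_{3\bar\alpha}$.

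The main delicate point is justifying the ``extraction of the $\alpha$-th canonical component'' of the physical $S$-matrix, since the theory is only asymptotically semisimple in the non-equivariant limit and becomes non-semisimple at $q = 0$. I would handle this as in the proof of Lemma~\ref{asymptoticofI2}: introduce the equivariant deformation with parameter $\lambda$, under which the theory is genuinely semisimple already at $q = 0$, derive the analogous identities using the standard relation $S\,\Psi^{-1}|_{q=0}\,\Gamma^{-1}C^{-1} = \Psi^{-1} R\,e^{U/z}$ between the physical $S$-matrix and the asymptotic fundamental solution, and then take $\lambda \to 0$. Once that framework is in place, matching the leading terms in the Birkhoff formulas reduces each determinant with $D_H$ to the corresponding determinant with $L_\alpha$, completing the argument.
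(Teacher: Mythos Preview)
Your approach is essentially the same as the paper's: both apply the Birkhoff recursions of Proposition~\ref{birkhoff} to the $\alpha$-th column of the asymptotic fundamental solution $\tilde S_{k\alpha}=R_{k\bar\alpha}(z)e^{u^\alpha/z}$, and then read off the $z^0$-coefficient $\Psi_{k\bar\alpha}=R_{k\bar\alpha}(0)$, which replaces each $D_H$ by $L_\alpha$ and the seed $I(z)/(zI_0)$ by $\Psi_{0\bar\alpha}$. The paper's proof is terser (it just says ``Using Proposition~\ref{birkhoff}, we get\ldots''), while you spell out the asymptotic mechanism and, helpfully, flag the equivariant deformation of Lemma~\ref{asymptoticofI2} as the way to make the passage from $S$ to $\tilde S$ rigorous.

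One small point of imprecision: saying ``$H$ acts nilpotently'' is not quite the reason $H$ disappears. The scalar functions $\tilde I_\alpha$ satisfy the Picard--Fuchs equation with $D$ rather than $D_H$ (Lemma~\ref{asymptoticofI2}); equivalently, in the equivariant picture the restriction to the fixed point $\alpha$ replaces $H$ by $h_\alpha=\xi^\alpha\lambda$, which vanishes in the limit $\lambda\to 0$. Your last paragraph already points to this route, so the argument is sound once that is made explicit.
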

\begin{proof}
Define
 $
{\Psi_i}^\beta := (H^i,e^\beta )^t $.
Then since
$$
\mathbf 1 = \sum_\alpha e_\alpha,
$$
we have
$$
{\Psi_0}^\beta = 1 =
\Psi_{0 \bar \beta }  \Delta_\beta^{\frac{1}{2}} %= \frac{R_0^\alpha}{I_0}
$$
i.e.\ $ \Delta_\beta^{-\frac{1}{2}} = \Psi_{0 \bar \beta } $.
Recall that $\Psi_{0 \bar \beta } $ was computed in
Proposition~\ref{prop:R1asym}.
Using Proposition~\ref{birkhoff}, we get
{\footnotesize
\begin{align*}
\Psi_{0 \bar \alpha} =\,&  \frac{1+\q_\alpha}{I_0 \cdot \q_\alpha ^2} \cdot \left(\frac{ -\t}{5}\right)^{-\frac{3}{2}} \\
\Psi_{1 \bar \alpha}  =\,&\frac 1{I_{1,1}}\Big( {L_\alpha-I_{1,1;a} \t} \Big) \frac{1+\q_\alpha}{I_0 \cdot \q_\alpha ^2} \cdot \left(\frac{ -\t}{5}\right)^{-\frac{3}{2}}
=\det \left( \frac{L_\alpha-I_{1,1;a}\t}{I_{1,1}}\right)  \Psi_{0 \bar \alpha}\\
\Psi_{2 \bar \alpha} =\,&   \det\begin{pmatrix}
\frac{L_\alpha-I_{2,2;a}\t}{I_{2,2}} & -\frac{I_{2,2;b}}{I_{2,2}}\t^2  \\
-1& \frac{L_\alpha-I_{1,1;a}\t}{I_{1,1}} \end{pmatrix} \Psi_{0 \bar \alpha}\\
\Psi_{3 \bar \alpha}
=\,&   \det\begin{pmatrix}
\frac{L_\alpha-I_{3,3;a}\t}{I_{3,3}} & -\frac{I_{3,3;b}}{I_{3,3}}\t^2& -\frac{I_{3,3;c}}{I_{3,3}}\t^3 \\
-1 & \frac{L_\alpha-I_{2,2;a}\t}{I_{2,2}}  & -\frac{I_{2,2;b}}{I_{2,2}}\t^2  \\
&-1& \frac{L_\alpha-I_{1,1;a}\t}{I_{1,1}} \end{pmatrix}\Psi_{0 \bar \alpha}
\end{align*}}%
The relations
$$ \sum_\alpha \Psi_{j \bar\alpha} \Psi_{k \bar\alpha} = (H^j,H^k)^t$$
provide a consistency check of the constants $C_\alpha$ fixed in
Lemma~\ref{asymptoticofI2}.
\end{proof}

We have already computed $R_{0\alpha}$.
Next we compute the remaining entries of the $R$-matrix.
By applying Proposition~\ref{birkhoff} to
Equation~\eqref{asymptoticofIeq}, we have the following inductive
formula: {\footnotesize
\begin{align*}
{R(z)_1}^\alpha  =\,&  \frac{1}{I_{1,1}} \left(\Delta_\alpha^{\frac{1}{2}} q\frac{d}{dq} \Delta_\alpha^{-\frac{1}{2}} {R(z)_0}^\alpha  + {L_\alpha-I_{1,1;a}\t} {R(z)_0}^\alpha  \right)  \\
{R(z)_2}^\alpha  =\,&  \frac{1}{I_{2,2}} \left(\Delta_\alpha^{\frac{1}{2}} q\frac{d}{dq} \Delta_\alpha^{-\frac{1}{2}} {R(z)_1}^\alpha  + ({L_\alpha-I_{2,2;a}\t}){R(z)_1}^\alpha -I_{2,2;b}\t^2   {R(z)_0}^\alpha \right)\\
{R(z)_3}^\alpha  =\,&  \frac{1}{I_{3,3}} \left(\Delta_\alpha^{\frac{1}{2}} q\frac{d}{dq} \Delta_\alpha^{-\frac{1}{2}} {R(z)_2}^\alpha  + ({L_\alpha-I_{3,3;a}\t}){R(z)_2}^\alpha -I_{3,3;b}\t^2   {R(z)_1}^\alpha-I_{3,3;c}\t^3   {R(z)_0}^\alpha \right)\\
{R(z)_4}^\alpha  =\,&  \frac{1}{I_{4,4}} \Big(\Delta_\alpha^{\frac{1}{2}} q\frac{d}{dq} \Delta_\alpha^{-\frac{1}{2}} {R(z)_3}^\alpha + ({L_\alpha-I_{4,4;a}\t}){R(z)_3}^\alpha -I_{4,4;b}\t^2   {R(z)_2}^\alpha\\
& \qquad \qquad -I_{4,4;c}\t^3   {R(z)_1}^\alpha-I_{4,4;d}\t^3   {R(z)_0}^\alpha \Big)
\end{align*}}%
Together with Proposition~\ref{prop:R1asym} (more precisely Equation~\eqref{R1matrix}),
we can then write down all the necessary entries of the $R$-matrix.

\medskip

We will now compute some very explicit entries of the $\Psi$- and
$R$-matrix.
For this, let us introduce the notation
$$
{R^4}_{\bar \alpha}(z) := (\tilde H_4, R(z) \bar e_\alpha)^\t ,\quad {(R_k)^4}_{\bar \alpha}  := (\tilde H_4, R_k \bar e_\alpha)^\t,\quad {(R_k)^{4 \,\alpha}}   := (\tilde H_4, R_k  e^\alpha)^\t .
$$
By applying Lemma~\ref{Hdual4} to the asymptotic expansion formula \eqref{asymptoticofS}, and by using the result in Lemma \ref{asymptoticofI2}, we obtain
\begin{equation} \label{equationofR4}
\left(zq\frac{d}{dq}+ L_\alpha -\frac{\t}{5} \right)   {R^4}_{\bar \alpha}(z)   + \frac{I_0}{5} R_{0 \bar \alpha } (z)=0 .
\end{equation}
Then, we can use this equation to compute the following entries of the
$\Psi$- and $R$-matrix.

\subsubsection{The entries $\Psi^{4\,\alpha }$}

Note that
$$
\Psi^{4\,\alpha } := (\tilde H_4, e^\alpha) = (R_0)^{4\,\alpha }
$$
since $R_0 $ is the identity matrix in any basis.

Consider the coefficient of $z^0$ of Equation \eqref{equationofR4}.
Since $L_\alpha-\frac{\t}{5} = \frac{-\frac{\t}{5}}{1+\q_\alpha}$, we obtain
$$
{\Psi^4}_{\bar \alpha}  =- \frac{1+\q_\alpha}{-\frac{\t}{5}} \frac{I_0}{5} \Psi_{0\bar \alpha } .
$$
Recall that $
\Delta^{\frac{1}{2}} =  \frac{I_0 \cdot \q_\alpha ^2}{1+\q_\alpha}  \cdot \left(\frac{-\t}{5}\right)^{-\frac{3}{2}}= (\Psi_{0\bar \alpha } )^{-1}
$,
so that we have
$$
{\Psi^{4\,\alpha}}   =\frac{1+\q_\alpha}{{\t} } I_0  ,\quad
{\Psi^4}_{\bar \alpha} = \frac{(1+\q_\alpha)^2}{ \q_\alpha^2 \t} \left(\frac{-\t}{5}\right)^{\frac{3}{2}} .
$$

\subsubsection{The entries $(R_k)^{4\,\alpha }$}

Considering the coefficient of $z^1$ of equation \eqref{equationofR4}, we obtain
$$
q\frac{d}{dq} {\Psi^4}_{\bar \alpha}  +\left( L_\alpha -\frac{\t}{5} \right)  {(R_1)^4}_{\bar \alpha}     + \frac{I_0}{5} (R_1)_{0 \bar \alpha} =0
$$
Since $q\frac{d}{dq} \frac{(1+\q_\alpha)^2} { \q_\alpha ^2}=- \frac{2(1+\q_\alpha)} {5 \q_\alpha ^2}$,
we have
$$
-I_0 \frac{2}{5{\t} }  -\frac{\t}{5} \frac{1}{1+\q_\alpha} {(R_1)^{4\, \alpha }}     + \frac{I_0}{5}{(R_1)_0}^{ \alpha}  =0
$$
Hence, by \eqref{R1matrix}, we have
$$
(R_1)^{4\, \alpha} = \frac{(1+\q_\alpha)(12-25 \q_\alpha)}{12\,\t^2 \cdot \q_\alpha}  I_0.
$$
Furthermore, by considering the coefficients of $z^2$ and $z^3$ of equation \eqref{equationofR4} one after another, we deduce
\begin{align*}
(R_2)^{4\, \alpha} =\,& \frac{(1+\q_\alpha)(288-1176 \q_\alpha+625 \q_\alpha^2)}{288\,\t^3 \cdot \q_\alpha^2}  I_0,\\
(R_3)^{4\, \alpha} =\,& \frac{(1+\q_\alpha)(20736-460512\q_\alpha+338868 \q_\alpha^2+11875 \q_\alpha^3)}{51840\,\t^4 \cdot \q_\alpha^3}  I_0 .
\end{align*}

\subsection{Generators and relations}

Next, we derive some basic relations in order to be able to write down
the closed formula for the $S$ and $R$-matrices in terms of a minimal
number of generators.

\subsubsection{Quantum product and relations between $I$-functions}\label{Ikrelations}

Recall that in the flat basis, the quantum product $\dot \tau *_\tau$ can be written in the following form
$$
\dot \tau *_\tau H^{k-1} = I_{k,k} H^{k}+I_{k,k;a} H^{k-1}\t+I_{k,k;b} H^{k-2}\t^2+\cdots .
$$
By \cite{ZZ08}, the functions $I_{k, k}$ have the following properties
\begin{equation} \label{ZZrelations}
I_{0,0}I_{1,1}\cdots I_{4,4}=(1-5^5q)^{-1},\quad I_{p,p} = I_{4-p,4-p}. %,\quad  I_{p,p}= I_{5+p,5+p}.
\end{equation}
Recall $A$ is the matrix for  $ \dot \tau *$ in the flat basis $\{H^k\}$.
\begin{lemma}\label{lem:minimalpolynomial}
The characteristic polynomial of $A$ is given by
\begin{equation} \label{charpoly}
\det(x-A) = \frac{x^5 - q(5x-\t)^5}{1-5^5 q} .
\end{equation}
In particular, we have
\begin{equation} \label{minimalpolynomial}
(\dot \tau *)^5 -q (5\dot \tau * -\t)^5 =0 .
\end{equation}
\end{lemma}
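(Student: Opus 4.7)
The plan is to reduce the lemma to the spectrum of $\dot\tau *_\tau$ via semisimplicity. For generic $(q,\lambda)$ the (equivariantly extended) twisted theory of $\bP^4$ is semisimple with canonical idempotents $\{e_\alpha\}_{\alpha=0}^4$, and quantum multiplication by any vector field $X$ acts diagonally in this basis with eigenvalue $X(u^\alpha)$ on $e_\alpha$. Applying this to $X = \dot\tau = H + q\tfrac{d}{dq}\tau$, the eigenvalues of $A$ are precisely the quantities $L_\alpha = h_\alpha + q\tfrac{d}{dq} u^\alpha$ that appear in Lemma~\ref{asymptoticofI2}. Equivalently, this identification can be read off directly from the leading ($z^0$) term of the asymptotic fundamental solution $\tilde S = \Psi^{-1} R\,e^{U/z}$ plugged into the quantum differential equation $D_H \tilde S = A \tilde S$, which forces $\Psi A \Psi^{-1}$ to be diagonal with entries $L_\alpha$.

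Lemma~\ref{asymptoticofI2} tells us that each $L_\alpha$ satisfies
\begin{equation*}
  L_\alpha^{5} - \lambda^{5} - q(5L_\alpha - \t)^{5} = 0.
\end{equation*}
Specializing to the non-equivariant limit $\lambda = 0$, the five values $L_\alpha$ are therefore the five roots of the polynomial $P(x) := x^{5} - q(5x-\t)^{5}$, whose leading coefficient in $x$ is $1 - 5^{5}q$. Normalizing to monic form gives
\begin{equation*}
  \det(x - A) \;=\; \prod_{\alpha=0}^{4} (x - L_\alpha) \;=\; \frac{x^{5} - q(5x - \t)^{5}}{1 - 5^{5}q},
\end{equation*}
which is \eqref{charpoly}. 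The identity \eqref{minimalpolynomial} then follows by Cayley--Hamilton applied to $A$, multiplied through by the scalar $(1 - 5^{5}q)$.

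The one step that requires care is the semisimplicity input, since at $(q,\lambda) = (0,0)$ the twisted theory degenerates (the classical cohomology of $\bP^4$ is not semisimple). I would handle this exactly as in the proof of Lemma~\ref{asymptoticofI2}: carry out the argument in the equivariantly extended theory where semisimplicity holds for all $q$ near $0$, obtaining $\det(x - A_\lambda) = (x^5 - \lambda^5 - q(5x-\t)^5)/(1-5^5 q)$, and then send $\lambda \to 0$. Since both sides are polynomials in $\lambda$ and formal power series in $q$, the limit is well-defined and yields \eqref{charpoly}. Alternatively, one can observe that the five $L_\alpha$ at $\lambda = 0$ remain distinct for generic $q$ (as roots of $P$), so the diagonalization goes through on the punctured locus and the resulting polynomial identity extends to all $q$ by continuity.
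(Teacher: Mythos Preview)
Your proposal is correct and follows essentially the same route as the paper: identify the eigenvalues of $A$ with the $L_\alpha$ via the asymptotic fundamental solution (equivalently, via the general fact that canonical coordinates diagonalize quantum multiplication), read off the equation satisfied by $L_\alpha$ from the $z^0$-coefficient of the Picard--Fuchs equation, and conclude. You are somewhat more careful than the paper about the semisimplicity degeneration at $q=0$, invoking the equivariant extension from Lemma~\ref{asymptoticofI2} and passing to the limit, whereas the paper simply cites Lemma~\ref{asymptoticofI} directly; this extra care is welcome but not a genuinely different argument.
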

\begin{proof}
Note that the eigenvalues of the matrix $A \frac{dq}{q}$ are just $d u^\alpha$. Recall that by Lemma~\ref{asymptoticofI},
$$d u^\alpha =  L_\alpha \frac{dq}{q},\quad  L_\alpha = \frac{-t \xi^\alpha q^{\frac{1}{5}}}{1-5 \xi^\alpha q^{\frac{1}{5}}} ,$$
so that the characteristic polynomial of $A$ is
$$
\prod_\alpha (x-L_\alpha) = \frac{x^5 - q(5x-\t)^5}{1-5^5 q}  .
$$
In fact, one can see that the numerator is just the coefficient of $z^0$ of \eqref{picardfuchsforR} with $q \frac d{dq} u^\alpha$ replaced by $x$.
\end{proof}

The characteristic polynomial gives us many relations between the
entries $\{I_{k,k;a},I_{k,k;b},\cdots\}$ in the matrix $A$.
However, these do not cover all relations.
For additional relations, we can use the symmetry of the quantum product
\begin{equation}\label{symmetryofqp}
(\dot \tau *_\tau H^k, H^j)^\t = (\dot \tau *_\tau  H^j, H^k)^\t .
\end{equation}
\begin{example}
By taking $(k,j)=(0,3)$ and $(1,2)$ in \eqref{symmetryofqp}, we obtain the following relations between $I_{k,k;a}$:
$$
I_{1,1} - 5I_{1,1;a} = I_{4,4} - 5I_{4,4;a},\quad
I_{2,2} - 5I_{2,2;a} = I_{3,3} - 5I_{3,3;a}
$$
Furthermore, we have
$$
 -5I_{5,5;a} =  I_{0}-1,\quad  \sum_{k=1}^5 I_{k,k;a} = \t^{-1} \Tr A = -  \frac{5^5 q}{ 1-5^5 q} .
$$
Together with the symmetry of $I_{k,k}$: $I_{3,3}=I_{1,1}$ and $I_{4,4}=I_0$,
we deduce
\begin{equation} \label{Ikka}
 I_{1,1;a} + I_{2,2;a}  =\frac{I_{2,2}-1}{10}  -\frac{\frac{1}{2}\cdot5^5 q}{ 1-5^5 q} .
\end{equation}
\end{example}

In the end of this subsection, we conclude that by using the
characteristic polynomial of $A$ and the symmetry of quantum product,
there are indeed only \textbf{two} independent functions in
$\{I_{k,k;a},I_{k,k;b},\cdots\}$:
\begin{lemma} \label{Ikrelationsex}
Denoting the entries in $A$ by $a_{i,j}$, we have
$$
a_{i,j}  \in  I_{1,1}^{-2} I_{2,2}^{-1} \, \mathbb Q [I_{0},I_{1,1},I_{2,2},L,\PP,\QQ],\quad  \forall i,j=0,1,\cdots,4.
$$
\end{lemma}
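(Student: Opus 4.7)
The plan is to combine the minimal polynomial identity \eqref{minimalpolynomial} with the Frobenius symmetry \eqref{symmetryofqp} to express every entry of $A$ inside the six-generator ring. As a starting point, the diagonal entries reduce by \eqref{ZZrelations}: $I_{3,3} = I_{1,1}$, and the product relation together with $q = (1-L^{-5})/5^5$ places all $I_{k,k}$ inside $\mathbb Q[I_0, I_{1,1}, I_{2,2}, L]$. The definitions of $\QQ$ and $\PP$ also immediately give
$$I_{1,1;a} = \tfrac 15 + L\QQ, \qquad I_{2,2;b} = \tfrac{L^2}{I_{1,1}}\bigl(\PP - \QQ^2 - \tfrac{L^4}{2}\QQ\bigr),$$
both of which sit in the target ring with denominator at worst $I_{1,1}$.

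The next step is a triangular propagation via \eqref{symmetryofqp}. The pairs $(j,k)$ with $j + k \le 4$ yield a linear system of the form illustrated in \eqref{Ikka}; solving it upward determines each $I_{k,k;a}$ for $k = 2, 3, 4, 5$ as an affine function of $I_{1,1;a}$ (hence of $\QQ$) and the diagonal entries, with pivots drawn from $\{I_{1,1}, I_{2,2}\}$. Proceeding one subdiagonal deeper, the pairs with $j + k \le 6$ propagate $I_{2,2;b}$ into the remaining $I_{k,k;b}$ for $k = 3, 4, 5$. After this round, the only entries left unexpressed are $I_{3,3;c}, I_{4,4;c}, I_{5,5;c}, I_{4,4;d}, I_{5,5;d}$, and $I_{5,5;e}$. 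For these, I would isolate the coefficients of $t^3, t^4, t^5$ in the matrix identity $A^5 = q(5A - t)^5$; each resulting scalar identity is linear in the unknowns of its own $t$-weight, with the previously handled $a$- and $b$-entries (and diagonal entries) as inputs, and solving these identities places the remaining entries inside the target ring.

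The main obstacle is bookkeeping: one must verify that every pivot appearing in the upward reduction lies in $\{I_{1,1}, I_{2,2}\}$ so that $I_0$ never enters a denominator, and that the accumulated multiplicities of these pivots in any final expression do not exceed $I_{1,1}^{2} I_{2,2}$. Both checks are finite computations based on the explicit form of the symmetry equations and the expansion of the minimal polynomial, and can be carried out by direct inspection once the linear systems are laid out. Beyond this bookkeeping, no ingredient other than \eqref{minimalpolynomial} and \eqref{symmetryofqp} is required.
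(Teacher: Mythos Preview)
Your approach is the same as the paper's: combine the Frobenius symmetry \eqref{symmetryofqp} with the characteristic polynomial \eqref{charpoly} and the definitions of $\QQ,\PP$ to solve for every entry of $A$, then inspect the resulting formulae. The paper's proof is exactly such a list of explicit relations followed by the observation that the claimed denominator bound holds.

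One bookkeeping point deserves correction. Your assignment of which entries are handled by symmetry alone is slightly off: working out the ten conditions $(GA)_{ij}=(GA)_{ji}$ gives only eight nontrivial linear relations among the thirteen unknown off-diagonal entries, so symmetry by itself does \emph{not} determine $I_{2,2;a}$ (nor $I_{5,5;a}$) and does not determine $I_{3,3;b}$. In the paper these are fixed with the help of the trace and the lower coefficients of the characteristic polynomial, and indeed the explicit formula for $I_{3,3;b}$ there visibly involves $L^{10}$ and $\PP$, which cannot arise from the purely linear symmetry constraints. This does not affect your overall strategy—five coefficients of the characteristic polynomial together with the eight symmetry relations are exactly enough—but the division of labor you wrote down needs to be shifted, and the characteristic-polynomial equations at each level are quadratic (not linear) in the already-solved entries. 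Once the linear systems are actually laid out, both issues resolve themselves, in agreement with the paper's explicit list.
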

\begin{proof}
 Let us list all the relations mentioned in this subsection.

 First, the basic relations (Zagier--Zinger's relations) are
$$
I_{k,k} = I_{4-k,4-k},\quad \text{ for } k=0,1,2,3,4 ,
$$
and
$$
I_{0,0}^2I_{1,1}^2I_{2,2} = X:=\frac{1}{1-5^5q}.
$$

Next, we can use the symmetry \eqref{symmetryofqp} and the characteristic polynomial \eqref{charpoly}.
For the extra $I_{5,5;*}$-functions, we have
\begin{align*}
I_{5,5;a} =& \frac{1}{5}(1-I_{0,0}) ,\quad
 I_{5,5;b} =   \frac{ 1}{625} (25-25 I_{3,3}-125 I_{4,4;a}),\\
 I_{5,5;c}  =& \frac{1}{625} (5-5 I_{2,2}-25 I_{3,3;a}-125 I_{4,4;b}),\\
 I_{5,5;d} =&\frac{1}{625}(1-I_{1,1}-5 I_{2,2;a}-25 I_{3,3;b}-125 I_{4,4;c}),\\
  I_{5,5;e} =& \frac{1}{625}(-I_{1,1;a}-5 I_{2,2;b}-25 I_{3,3;c}-125 I_{4,4;d}+\frac{1}{5}(1-I_{0,0}) ) .
\end{align*}
For the extra $I_{4,4;*}$-functions, we have
\begin{align*}
   I_{4,4;a} =&\frac{1}{5}(I_{0,0}+(5 I_{1,1;a}+5 I_{2,2;a}-I_{2,2}-5 I_{3,3;a})),\\
     I_{4,4;b} =& \frac{1}{5}(-I_{2,2;a}+5 I_{2,2;b}+I_{4,4;a}),\\
I_{4,4;c} =& \frac{1}{5}(-I_{3,3;b}+5 I_{3,3;c}+I_{4,4;b}) .
\end{align*}
For the extra $I_{3,3;*}$, $I_{2,2;*}$-functions, we have
$$
I_{3,3;a} = \frac{1}{5}(I_{1,1}+(5 I_{2,2;a}-I_{2,2})),\quad
  I_{2,2;a} = -I_{1,1;a}-\frac{1}{10}(1-I_{2,2}) -\frac{1}{2}(X-1)
$$
Moreover, for the other extra $I$-functions, we have {\footnotesize
\begin{align*}
%I_{3,3;c} =&\frac{1}{I_{1,1}I_{2,2}}\left({\frac{2}{125}}-\frac{X}{50}+ \left( \frac{X}{5}-{\frac{6}{25}} \right) {I_{1,1;a}}+ \left( -\frac{3}{2}\,X+\frac{6}{5} \right) I_{1,1;a}^{2}-2\,I_{1,1;a}^{3}\right)\\ &+{\frac {1}{250}}-\frac{I_{2,2;b}}{10 I_{2,2}} \left(  I_{2,2 }+5 X-4+20 I_{1,1;a} \right)\\
 I_{3,3;b}  = &  \,\,\,\,{\frac {1}{100\,{ I_{2,2}}}}\big({25\,{L}^{10}-40\,{L}^{5}-200\,{L}^{2}\PP+4\,{I_{1,1}}\,{I_{2,2}}
-{I_{2,2}}^{2}}\big)\\
 I_{3,3;c}=&\,-{\frac {10\,{L}^{5}-{{I_{1,1}}}^{2}{I_{2,2}}}{250\,{I_{1,1}}\,{I_{2,2}}}}- {\frac {{L}^{2} \left( 5\,{L}^{5}+{I_{2,2}} \right) \PP}{{
10\,I_{1,1}}\,{I_{2,2}}}}+{\frac {{L}^{6} \left( 5\,{L}^{5}+{I_{2,2}
}-8 \right) \QQ}{{ 20\,I_{1,1}}\,{I_{2,2}}}}+{\frac {{L}^{2}{\QQ}^{2}}{{
10\,I_{1,1}}}}- {\frac {2\,\QQ\PP{L}^{3}}{{I_{1,1}}\,{I_{2,2}}}}\\
 I_{4,4;d}= &  \,\,\,\,{\frac {10\,{I_{1,1}}\,{L}^{5}-10\,{L}^{5}+2\,{I_{0,0}}\, I_{1,1} ^
{2}{I_{2,2}}- I_{1,1} ^{3}{I_{2,2}}}{1250\,  I_{1,1}  ^{2}{I_{2,2}}}
}+{\frac {{L}^{6} \left( -5\,{I_{1,1}}\,{L}^{5}+{I_{1,1}}\,{I_{2,2}}+8
\,{I_{1,1}}-8 \right) }{100\, {I_{1,1}} ^{2}{I_{2,2}}}}\QQ\\
&+{\frac {{L}^{
2} \left( 25\,{L}^{10}-40\,{L}^{5}+2\,{I_{1,1}}\,{I_{2,2}} \right) }{
100\,{{I_{1,1}}}^{2}{I_{2,2}}}}{\QQ}^{2}  \,+\PP \left( {\frac {{L}^{2}  (  5\,{L}^{5}-{I_{2,2}}  ) }{50\,{I_{1,1}
}\,{I_{2,2}}}}-{\frac {{L}^{3} ( 5 {L}^{5} -2 {I_{1,1}})  }{5\, I_{1,1} ^{2}{I_{2,2}}}}\QQ\right.
\\
 & \quad \left. -{\frac {2\,{L}^{4} }{I_{1,1}^{2}{I_{2,2}}}}{\QQ}^{2}\right)+{\frac {{L}^{4}{\PP}^{2}} {I_{1,1} ^{2}{
I_{2,2}}}}
\end{align*}}%
The lemma follows from a careful examination of all these formulae.
\end{proof}

\subsubsection{Identities between derivatives of basic and extra generators}

\begin{lemma}\label{YamaguchiYauident}
We have the following identities between  the basic generators and their derivatives:
\begin{align}
\Y_2=&\, -3 \X_2-\Y^2-\X^2-\frac{15}{4}\Z_2\\
\X_4 =&\,  -4  \X  \X_3-3  \X_2^2-6  \X^2  \X_2- \X^4-\frac{15}{4} (\Z_2  \X^2+\Z_2  \X_2+\Z_3  \X)\\
&\quad -\frac{23}{24} \Z_4+\frac{29}{9} \Z_1^2 \Z_2-\frac{65}{72} \Z_1 \Z_3-\frac{3}{4} \Z_2^2 \nonumber
\end{align}
\end{lemma}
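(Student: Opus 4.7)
The plan is to derive both identities from (i) the fourth-order Picard--Fuchs equation
$\big[\theta^4 - 5q\prod_{k=1}^4(5\theta+k)\big]I_0 = 0$ with $\theta = q\tfrac{d}{dq}$, and (ii) the Zagier--Zinger relations \eqref{ZZrelations}, in particular $I_0^2 I_{1,1}^2 I_{2,2} = L^5$. Throughout, I would use the translations
\[
\frac{d\log I_0}{du} = \X+\Z-\frac{1}{5L},\qquad \frac{d\log I_{1,1}}{du} = \Y-\X+\Z-\frac{1}{5L},\qquad \frac{d\log L}{du} = \Z-\frac{1}{5L},
\]
which follow directly from the definitions of $\X,\Y,\Z$, together with $\tfrac{d}{du} = \tfrac{1}{L}\theta$ and $\tfrac{dL}{du} = \tfrac{L^5-1}{5}$.

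For identity~(1), I would first take $\tfrac{d}{du}$ of the logarithm of the Zagier--Zinger relation $I_0^2 I_{1,1}^2 I_{2,2} = L^5$; the three translations above immediately give
\[
\frac{d\log I_{2,2}}{du} = \Z - 2\Y - \frac{1}{5L}.
\]
Differentiating once more, and using $\tfrac{d}{du}\tfrac{1}{L} = -\tfrac{L^3}{5}+\tfrac{1}{5L^2}$, yields an expression for $\tfrac{d^2\log I_{2,2}}{du^2}$ in terms of $\Y_2, \Z_2, L$. Independently, the recursion $I_{2,2} = 1 + \theta\big[(\theta(I_2/I_0) + I_1/I_0)/I_{1,1}\big]$---which is the non-equivariant limit of Proposition~\ref{birkhoff}---combined with the Picard--Fuchs equation to eliminate $I_2$ and higher $\theta$-derivatives of $I_0$ in favor of $I_0, I_{1,1}, L$, gives a second expression for $\tfrac{d^2\log I_{2,2}}{du^2}$ now in terms of $\X,\Y,\X_2,L$. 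Equating the two expressions and using Remark~\ref{Zkpolynomial} to rewrite pure $L$-polynomials as $\Z_k$-polynomials yields identity~(1).

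For identity~(2), I would expand the Picard--Fuchs equation as
$[(1-5^5q)\theta^4 - 6250\,q\theta^3 - 4375\,q\theta^2 - 1250\,q\theta - 120\,q]I_0 = 0$, divide by $I_0$, and use Faà di Bruno to rewrite each $\theta^k I_0/I_0$ as a polynomial in $\theta^j\log I_0$ for $1\leq j\leq k$. Converting $\theta = L\,d/du$ and substituting $\tfrac{d^j\log I_0}{du^j} = \X_j + \Z_j - \tfrac{d^j}{du^j}(\tfrac{1}{5L})$ produces a polynomial relation in $\X_k, \Z_k, L$. The auxiliary $L$-dependent terms are themselves polynomials in $L$ and absorb into the $\Z_k$-terms via Remark~\ref{Zkpolynomial}; solving for the leading term $\X_4$ then gives identity~(2).

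The main computational obstacle is the bookkeeping for identity~(2): expanding Faà di Bruno through fourth order and tracking the polynomial-in-$L$ coefficients produced when iterating $(L\,d/du)^k$ is tedious, though entirely mechanical. A useful sanity check is to verify both identities against the $q$-expansions in Remark~\ref{numer} to the order available, which pins down sign and coefficient errors immediately.
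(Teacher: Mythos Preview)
The paper's ``proof'' is simply a citation to \cite{YY04}, so your proposal is already more than what the paper provides. Your strategy for identity~(2) is correct and standard: the fourth-order Picard--Fuchs equation for $I_0$, rewritten via Fa\`a di Bruno in logarithmic derivatives and converted to $u$-derivatives, is exactly a degree-four polynomial relation among $\X_k$ and $L$, and the pure-$L$ part repackages as the stated $\Z_k$-combination.

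Your argument for identity~(1), however, has a genuine gap. The step ``use the Picard--Fuchs equation to eliminate $I_2$'' does not work: the scalar PF equation $\mathcal L_4 I_0=0$ constrains only $I_0$, and the $H^2$-component of the full PF system is an inhomogeneous fourth-order ODE for $I_2$, not an algebraic relation expressing $I_2$ through $I_0,I_{1,1},L$. Consequently your ``second expression'' for $\tfrac{d^2}{du^2}\log I_{2,2}$ from Birkhoff cannot be freed of $I_2$ using only the product relation $I_0^2I_{1,1}^2I_{2,2}=L^5$ and PF for $I_0$; those two ingredients together are circular with your first expression and produce no new constraint.

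What is actually needed is the \emph{other} Zagier--Zinger relation $I_{3,3}=I_{1,1}$. With it, the Birkhoff recursion yields the operator factorisation
\[
\theta\,\frac{1}{I_{1,1}}\,\theta\,\frac{1}{I_{2,2}}\,\theta\,\frac{1}{I_{1,1}}\,\theta\,\frac{1}{I_0}\;=\;I_0\cdot\Big[\theta^4-5q\textstyle\prod_{k=1}^4(5\theta+k)\Big],
\]
since both sides are fourth-order operators annihilating the four periods $y_0,\dots,y_3$ and have matching leading coefficients $I_0/L^5$. Substituting $I_{2,2}=L^5/(I_0I_{1,1})^2$ and expanding both sides in powers of $\theta$ gives five scalar identities among $\theta^k\log I_0$, $\theta^k\log I_{1,1}$, and $L$; the $\theta^4$-coefficient recovers the product relation, while the lower coefficients yield precisely identities~(1) and~(2) after your change of variables. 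This is essentially the Yamaguchi--Yau derivation, and it repairs your outline for~(1) with minimal extra input.
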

\begin{proof}
This is just a restatement of the identities in \cite{YY04}.
\end{proof}

%Deduce new relations by using Picard--Fuchs equation?

\begin{lemma} \label{diffequationsforPQ}
We have
the following identities for the extra generators
{\small
\begin{align}
\frac{d^2}{du^2}\QQ  = &-2\,\X{ \frac{d}{du}\QQ }-\big(4\,\X_{{2}}+2\,{\X}^{2} +{\frac {15\,\Z_{{2}}}{4}}\big)\QQ\\
&-\frac{5}{2} \,\big( 2\Z_{
{1}}\X_{{2}}+\Z_{{1}}{\X}^{2}+\Z_{{2}}\X\big)-{\frac {125\,\Z_{{1}}\Z_{{
2}}}{24}}-{\frac {5\,\Z_{{3}}}{24}}  \nonumber
,\\
\frac{d^2}{du^2} \PP  = &-(3\,\X +\Y){ \frac{d}{du} \PP }+\big(2\,\X_{{2}} -{\X}^{2} -2\,\Y\X  +{\Y}^{2}  +{\frac {
15\,\Z_{{2}}}{4}}\big)\PP  \\
& -\frac{1}{2}\, \big( (\frac{d}{du}+\X-\Y)\QQ\big)  \big(  (\frac{d}{du}+\X- \Y)(2\QQ+5 \Z_{{1}}) \big)-{\frac {15\,{\Z_{{1}}}^{2}{\X}^
{2}}{4}}-{\frac {55\,\Z_{{1}}\Z_{{2}}\X}{4}} \nonumber
\\
&+\frac{5}{2}\,\big(\Z_{{2}}(\X_{{2}}+{\X}^{2})+
{\Z_{{1}}}^{2}\X_{{2}} +\Z_{{3}}\X\big) +{\frac {305\,{
\Z_{{2}}}^{2}}{64}}-{\frac {35\,{\Z_{{1}}}^{2}\Z_{{2}}}{8}}+10\,{\Z_{{1}}}
^{4}
. \nonumber
\end{align}}
\end{lemma}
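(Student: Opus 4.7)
The plan is to derive the two second-order ODEs by differentiating the defining formulas for $\QQ$ and $\PP$ and repeatedly applying the algebraic identities already established in Lemma~\ref{Ikrelationsex} (together with its proof, which expresses every entry of the quantum product matrix $A$ as a rational function of $I_0, I_{1,1}, I_{2,2}, L, \PP, \QQ$). The underlying input is the Picard--Fuchs equation \eqref{PFequation} specialized to each Birkhoff component $I_0, I_1, I_2, I_3$, together with the matrix equation \eqref{minimalpolynomial}, which together give first-order evolution rules for $I_0$, $I_{1,1}$, $I_{2,2}$, $I_{1,1;a}$, $I_{2,2;b}$, etc., under $q\frac{d}{dq} = L^{-1} \frac{d}{du}$.

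First I would compute $\frac{d}{du}\QQ$ and $\frac{d}{du}\PP$ directly from their definitions, using $\frac{d}{du}L = \tfrac{1}{5}(L^5-1)$ (Remark~\ref{Zkpolynomial}), and using the basic evolution equations $q\frac{d}{dq}\log I_0 = -L\X$, $q\frac{d}{dq}\log I_{1,1} = L(-2\X-\Y+\text{polynomial in }L)$ (derivable from the definitions of $\X,\Y,\Z$), plus the analogous identities for $I_{1,1;a}$ and $I_{2,2;b}$ coming from the symmetry relations and the Picard--Fuchs equation. The resulting $\frac{d}{du}\QQ$ will already appear as the combination $\tilde\QQ = \frac{d}{du}\QQ + (\X-\Y)\QQ$ minus lower-order pieces, and similarly for $\frac{d}{du}\PP$.

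Next I would apply $\frac{d}{du}$ a second time. This produces terms involving $\X_2, \Y_2, \Z_2$, $\frac{d}{du}\PP$, $\frac{d}{du}\QQ$, $\PP, \QQ$, and products. At this stage two simplifications must be invoked: the Yamaguchi--Yau identity $\Y_2 = -3\X_2 - \Y^2 - \X^2 - \tfrac{15}{4}\Z_2$ from Lemma~\ref{YamaguchiYauident} to eliminate $\Y_2$, and the polynomial expressions for $I_{3,3;b}, I_{3,3;c}, I_{4,4;d}$ in $L, \PP, \QQ, I_{1,1}, I_{2,2}$ derived in the proof of Lemma~\ref{Ikrelationsex} to eliminate any second $q\frac{d}{dq}$ of $I_{1,1;a}$ or $I_{2,2;b}$ that is reintroduced by differentiation. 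The identity \eqref{minimalpolynomial} supplies the final algebraic relation needed to rewrite products like $\PP \QQ$ or $\QQ^3$ that may appear in intermediate formulas.

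The main obstacle is not conceptual but bookkeeping: after the substitutions above the right-hand sides of the two ODEs will be polynomials of degree 3 (resp.\ 4) in a dozen generators, and one must verify that every coefficient matches the claimed formula. I would organize the computation by separating the pure $\PP,\QQ$-terms from the pure basic-generator terms, checking each homogeneous weight (as defined after the introduction of $\tilde\PP,\tilde\QQ$) independently. The weight-grading, together with the known leading $q$-expansions in Remark~\ref{numer}, provides a practical sanity check that every coefficient in Lemma~\ref{diffequationsforPQ} is reproduced.
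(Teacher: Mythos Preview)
Your proposal takes a different route from the paper and, as written, has a gap at the key step.

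The paper's proof does not proceed by differentiating the definitions of $\QQ$ and $\PP$ and feeding in the Picard--Fuchs equation. Instead it rests entirely on Lemma~\ref{Hdual4}, which you do not invoke: the two identities
\[
S^*(t-5H)(\tilde H_4)=\tilde H_4,\qquad S^*\!\left(\tfrac{t-5H}{2}\right)(\tilde H_4)=\tilde H_4,
\]
together with $\mS^*\mathbf 1=I_0^{-1}$ at both specializations. Running Birkhoff factorization (Proposition~\ref{birkhoff}) forward from $\mathbf 1$ to $H^4$ at each of these two values of $z$ produces two independent systems of scalar identities among the basic generators, $\PP,\QQ$ and their $\frac{d}{du}$-derivatives. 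From the first system (plus Lemma~\ref{YamaguchiYauident}) the paper solves $\QQ_4,\PP_4$ in terms of $\PP_2,\PP_3,\QQ_1,\QQ_2,\QQ_3$; the \emph{second} specialization $z=\tfrac{t-5H}{2}$ then supplies the extra relation needed to eliminate $\QQ_3$, after which back-substitution gives the stated formula for $\PP_4$. The use of two distinct $z$-values is essential to getting enough relations to close the system.

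Your plan instead asserts that the Picard--Fuchs equation~\eqref{PFequation} and the Cayley--Hamilton identity~\eqref{minimalpolynomial} ``give first-order evolution rules for $I_0,I_{1,1},I_{2,2},I_{1,1;a},I_{2,2;b}$''. This is where the gap lies. Equation~\eqref{minimalpolynomial} is purely algebraic and produces no differential information; and the Picard--Fuchs equation is a fifth-order ODE on the $I$-function, not a first-order rule on the entries of $A$. Differentiating $I_{1,1;a}$ does not land you on another entry of $A$ in any direct way, so the algebraic relations of Lemma~\ref{Ikrelationsex} cannot by themselves absorb the new derivatives. You would need either to unwind $I_{1,1;a}$ all the way back to the $z$-coefficients of $I$ and track everything through the fifth-order equation, or to supply an independent source of differential constraints---which is precisely the role of the two $S^*(\tilde H_4)$ identities in the paper's argument. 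In particular, your outline gives no mechanism for why $\frac{d^2}{du^2}\QQ$ should be expressible in $\QQ,\frac{d}{du}\QQ$ and basic generators alone (with no $\PP$), and without that the argument does not close.
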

\begin{proof}
Recall that in Lemma~\ref{Hdual4}, we have proved the following identity for  two types of  special $S^*$-matrices
\begin{align}
     S^* (t-5H)(\tilde H_4 )   =& \, \tilde H_4, \label{Hdual4a}\\
    S^* \Big( \frac{1}{2}(t-5H) \Big)(\tilde H_4 )   = &\,    \tilde H_4   \label{Hdual4b}
\end{align}
On the other hand,  we have the explicit formula for
 $S^* \mathbf 1$ :
$$
  S^* (t-5H) \mathbf 1 = S^* \Big( \frac{t-5H}{2} \Big)\mathbf 1= I_0^{-1}
$$
and all the other columns of the special $S^*$-matrices can be
computed by Birkhoff factorization starting from $S^* \mathbf 1$ (see
Proposition~\ref{birkhoff}).
Comparing with \eqref{Hdual4a} and \eqref{Hdual4b}, we will get many
identities between basic generators, extra generators $\PP$, $\QQ$ and
their derivatives.

Let us introduce
$$\PP_k:= \frac{d^{k-2}}{du^{k-2}} \PP,\quad  \QQ_k:=\frac{d^{k-1}}{du^{k-1}} \QQ.
$$
In particular, we have $\QQ=\QQ_1,\,\PP=\PP_2 $.
By definition, the degrees of $\PP_k$ and $\QQ_k$ are both $k$.
By Lemma~\ref{YamaguchiYauident} and \eqref{Hdual4a}, we can solve
$\QQ_4$ and $\PP_4$ as rational functions of the basic generators and
$\PP_2,\PP_3,\QQ_1,\QQ_2,\QQ_3$.
Furthermore by using \eqref{Hdual4b}, we can solve $\QQ_3$ as rational
functions of the basic generators and $\PP_2,\PP_3,\QQ_1,\QQ_2$.
Finally, by applying the formula for $\QQ_3$ to the formula for
$\PP_4$, we also write down $\PP_4$ as rational functions of the basic
generators and the four extra generators $\PP_2,\PP_3,\QQ_1,\QQ_2$.
Since the details of solving these equations are tedious, we omit
them.
The resulting two formulae for $\PP_4$ and $\QQ_3$ are precisely what
we claim.
\end{proof}

\begin{corollary}
  The ring generated by all basic generators, extra generators and
  their derivatives is isomorphic to
  \begin{equation*}
    \bQ[\X , \X_2,\X_3, \Y, \QQ,\tilde \QQ,\PP ,\tilde \PP, L].
  \end{equation*}
\end{corollary}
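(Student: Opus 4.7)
The statement has two parts to verify: (a) every basic generator $\X_k,\Y_k,\Z_k$, every extra generator $\QQ,\PP,\tilde\QQ,\tilde\PP$, and all their $u$-derivatives lie in the ring $R:=\bQ[\X,\X_2,\X_3,\Y,\QQ,\tilde\QQ,\PP,\tilde\PP,L]$, and (b) the nine listed generators are algebraically independent over $\bQ$, so that $R$ is a genuine polynomial ring. The plan is to reduce (a) to the single statement that the derivation $\frac{d}{du}$ preserves $R$, which in turn reduces to a finite check on the nine named generators using the identities already established in Remark~\ref{Zkpolynomial}, Lemma~\ref{YamaguchiYauident}, and Lemma~\ref{diffequationsforPQ}. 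Part (b) is more delicate and will be handled by a Taylor-expansion argument using Remark~\ref{numer}.

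For the stability of $R$ under $\frac{d}{du}$, I would verify the derivative of each of the nine generators in turn. We have $\frac{d}{du}L=\tfrac15(L^5-1)\in\bQ[L]$ by Remark~\ref{Zkpolynomial}; $\frac{d}{du}\X=\X_2$ and $\frac{d}{du}\X_2=\X_3$ by definition; $\frac{d}{du}\X_3=\X_4\in R$ by the second identity of Lemma~\ref{YamaguchiYauident}; $\frac{d}{du}\Y=\Y_2\in R$ by its first identity (since $\Z_2\in\bQ[L]\subset R$); $\frac{d}{du}\QQ=\tilde\QQ-(\X-\Y)\QQ\in R$ and $\frac{d}{du}\PP=\tilde\PP-(\X+\Y)\PP\in R$ straight from the definitions of $\tilde\QQ$ and $\tilde\PP$; and finally $\frac{d}{du}\tilde\QQ$ and $\frac{d}{du}\tilde\PP$ lie in $R$ because Lemma~\ref{diffequationsforPQ} writes $\frac{d^2}{du^2}\QQ$ and $\frac{d^2}{du^2}\PP$ as polynomials in the basic generators together with $\QQ,\PP,\frac{d\QQ}{du},\frac{d\PP}{du}$, all of which are already in $R$. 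Once this closure is known, containment follows by simple induction: $\X_k=(\frac{d}{du})^{k-1}\X$, $\Y_k=(\frac{d}{du})^{k-1}\Y$, higher $u$-derivatives of $\QQ,\tilde\QQ,\PP,\tilde\PP$, and all $\Z_k$ (already polynomials in $L$ by Remark~\ref{Zkpolynomial}) all lie in $R$.

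The main obstacle is part (b). I would approach algebraic independence as follows. First, $L=(1-5^5q)^{-1/5}$ is transcendental over $\bQ$ because its Puiseux expansion at $q=0$ is a non-terminating, non-periodic power series in $q$, so the subring $\bQ[L]$ already has transcendence degree one over $\bQ$. It then suffices to show that the remaining eight series $\X,\X_2,\X_3,\Y,\QQ,\tilde\QQ,\PP,\tilde\PP$ are algebraically independent over $\bQ(L)$. I would do this by introducing the natural filtration by degree (with $\deg L=0$, $\deg \X_k=\deg\Y_k=k$, $\deg\QQ=\deg\PP-1=1$, $\deg\tilde\QQ=\deg\tilde\PP-1=2$), and within each graded piece extract the leading $q$-coefficient using the explicit values listed in Remark~\ref{numer}. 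Any hypothetical polynomial relation, restricted to its top-degree homogeneous component, would force a nontrivial $\bQ$-linear relation among finitely many monomials whose leading-$q$ coefficients can be read off Remark~\ref{numer}; an $8\times 8$ Vandermonde-type minor of these coefficients must be checked to be nonzero. The hardest part will be organizing the bookkeeping so that this finite check actually falls out cleanly rather than becoming an open-ended computation; a convenient way is to exploit the recursion $\frac{d}{du}=L\,q\frac{d}{dq}$ to read leading coefficients off the known series for $I_0$, $I_{1,1}$, and $I_{1,1;a}$ directly.
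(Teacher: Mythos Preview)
Your part (a) is correct and is precisely the paper's argument: the paper's entire proof is the one-line ``This follows from the preceding relations and Remark~\ref{Zkpolynomial},'' and your induction showing that $\frac{d}{du}$ preserves $R$ via Remark~\ref{Zkpolynomial}, Lemma~\ref{YamaguchiYauident}, and Lemma~\ref{diffequationsforPQ} is exactly what is meant.

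Your part (b), however, is unnecessary, and rests on a misreading of the statement. Look at the Remark immediately following the Corollary in the paper: it says explicitly that here $\bQ[\X,\X_2,\X_3,\Y,\QQ,\tilde\QQ,\PP,\tilde\PP,L]$ is \emph{defined} as the ring generated by these nine elements, and that ``the corollary does not imply that there are no other relations between the generators.'' In other words, the symbol $\bQ[\ldots]$ is being used to denote the subring generated by the listed elements inside the ambient ring of power series, not a free polynomial ring in nine indeterminates. The content of the Corollary is therefore only the equality of two subrings --- that the nine listed elements already generate everything --- which is exactly your part (a). No algebraic independence is asserted, and your proposed Vandermonde-type argument (which in any case is only sketched and would need substantial work to make rigorous) is not required.
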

\begin{proof}
  This follows from the preceding relations and Remark~\ref{Zkpolynomial}.
\end{proof}
\begin{remark}
Notice that here  $
 \bQ[\X , \X_2,\X_3, \Y,\QQ,\tilde \QQ,\PP ,\tilde \PP, L]$ is defined as the ring generated by these nine generators.
The
corollary does not imply that there are no other relations between the
generators.
\end{remark}

\section{Proof of key propositions} \label{proofofproposition}

Recall that the key propositions \ref{g0contp}, \ref{g1cont} and
\ref{g2cont} directly imply the Main Theorem.
In this section, we will finish their proofs.

\subsection{Proof of Proposition \ref{g0contp}}  \label{sec:g0cont}

We want to compute the contributions of \textit{graphs with a genus $0$ quasimap vertex}. Recall
that there are two graphs, with contributions given by the following correlators
\begin{align*}
\Cont'_{\Gamma_{a}^{0}}:= &  {L^2}  \left<\left<  \frac{ \frac{5}{3}H^3 \otimes H^4 \t^{-1}+\frac{5}{3} H^4 \t^{-1}\otimes H^3 +\frac{65}{8} H^4 \t^{-1}\otimes H^4 \t^{-1} }{(\t-5H)(\t-5H-\psi_1) \, (\t-5H)(\t-5H-\psi_2)} \right>\right>_{0,2}^{\t},
\\
\Cont'_{\Gamma_{b}^{0}}:= &  {L^2} \left<\left<  \frac{-\frac{5}{3}H^3+\frac{5}{24}H^4 \t^{-1}}{(\t-5H)(\t-5H-\psi_1)},
\frac{-\frac{5}{3}H^3+\frac{5}{24}H^4 \t^{-1}}{(\t-5H)(\t-5H-\psi_2)} \right>\right>_{0,2}^{\t}
\end{align*}

\begin{definition}
We define the following modified $S$-matrix
$$
\mS^*(\t)(\gamma) :=  \gamma + \sum_j e_j \langle\langle e^j, \frac{\gamma}{\t-5H-\psi} \rangle\rangle^\t_{0,2},
$$
and a modified $V$-matrix
$$
\mV(\t)(\gamma_1\otimes \gamma_2) := \left( \gamma_1 \otimes \gamma_2, \frac 1{2t - 5(1 \otimes H + H \otimes 1)}\right)^t + \left\langle\left\langle \frac{\gamma_1}{t-5H-\psi}, \frac{\gamma_2}{\t-5H-\psi} \right\rangle\right\rangle^\t_{0,2}  .
$$
\end{definition}
By definition, we can write the modified matrix as a specialization of the original matrix, for example
$$
\mS^*(\t)   =   \Res_{z = t - 5H} S^* (z) \frac{1}{\t -5H-z} = S^* (z) |_{z=\t-5H}.
$$
\begin{example}
As pointed out in the proof of Lemma \ref{diffequationsforPQ}, by the definition of the $I$-function for the twisted theory \eqref{Itwist}, we have
$$
\mS^*(\t)(\mathbf 1) = \frac{1}{I_0} .
$$
\end{example}

By definition of  $\mV$, the genus $0$ contribution is just the modified $V$-matrix with  given insertions. Since (see e.g. \cite{Gi98})
$$
V(\bt,z,w)(\gamma_1\otimes \gamma_2) = \frac{1}{z+w}\left( S (\bt,z) \gamma_1,  S (\bt,w) \gamma_2 \right)^\t
$$
the modified $V$-matrix can be computed as follows
\begin{align}
&\quad \mV(\t) ( \gamma_1\otimes\gamma_2 ) \nonumber\\
= &\,\, \left(  \gamma_1\otimes\gamma_2,  \frac{\sum_\alpha \mS^* (e_\alpha) \otimes  \mS^* (e^\alpha)}{2\t-5\, (1\otimes H+   H \otimes 1)} \right)^\t\nonumber\\
= &\,\, \left(  \gamma_1\otimes\gamma_2, \frac{ \sum_{j=0,1,2,3}\mS^* (H^j) \otimes  \mS^* (H^{3-j}) -\frac{t^5}{625}
\mS^* (\tilde H_4) \otimes  \mS^* (\tilde H_4)
}{2\t-5\, (1\otimes H+   H \otimes 1)} \right)^\t \label{VtoS}
\end{align}
where we have used $(\tilde H_4, \tilde H_4)^t=-\frac{625}{t^5}$.

Denote by
$$
\mS_k:= \mS^*(t) H^k,\quad  {\mS_{k}}^{i}:=(\tilde H_i, \mS_k)^\t ,
$$
where $\{\tilde H_i\}$ is the dual basis of $\{H^k\}$.
\begin{lemma}
For  $i=0,1,2,3,4; k =0,1,2,3$, the entries of the modified $S$-matrix satisfy
$$
{\mS_{k}}^{i} \in \frac{\t^{k-i}}{I_0\I_1\cdots\I_k} \mathbb Q[\X,\X_2,\X_3,\Y,  L,\PP,\QQ,\PP_3,\QQ_2, \I_1, \I_2 ]
$$
where $\I_k:=\frac{I_{k,k}}{L}$ for $k=1,2,3$ (notice that $\I_3=\I_1$). Moreover, if we define
$$
\deg \I_k =   1 \quad \text{for $k=1,2,3$ }
$$
and rewrite $L^j$ ($j=2,3,4,7,8,12$) in $\mS_k$ as polynomials of
$\Z_k$ ($k=1,2,3$) (see Remark \ref{Zkpolynomial}), then for all
$i=0,1,\cdots,4 $ and $k=0,1,\cdots,3$
$$
(\t^{i-k}I_0 \I_1\cdots\I_k) \cdot {\mS_{k}}^{i}   \in \mathbb Q[\X,\X_2,\X_3,\Y,  \Z, \Z_2, \Z_3,\PP,\QQ,\tilde \PP ,\tilde \QQ , \I_1, \I_2 ]
$$
are homogeneous polynomials of degree $k$.
\end{lemma}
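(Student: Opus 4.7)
The plan is to proceed by induction on $k$ via the Birkhoff factorization of Proposition~\ref{birkhoff}. Specializing $z = t - 5H$ and using $q\frac{d}{dq} = L\frac{d}{du}$, this yields the recursion
$$I_{k,k}\,\mS_k = \Bigl((t-5H)L\frac{d}{du} + H - I_{k,k;a}\,t\Bigr)\mS_{k-1} - \sum_{j=2}^{k} I_{k,k;b_j}\,t^j\,\mS_{k-j},$$
with $I_{k,k;b_j}$ denoting the appropriate subdiagonal entry of $A$. Taking $H^i$-components and using $H^5 = 0$ produces a scalar recursion expressing $\mS_k^i$ in terms of $\mS_{k-1}^{i}$, $\mS_{k-1}^{i-1}$ and $\mS_{k-j}^i$. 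The base case $\mS_0 = 1/I_0$ is immediate.

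For the inductive step of the first (membership) claim, each application of $1/I_{k,k} = 1/(L\I_k)$ supplies the missing $\I_k$ in the denominator while the extra factor $1/L$ is absorbed by the operator $L\,d/du$ that follows. Lemma~\ref{Ikrelationsex} places the entries $I_{k,k;a}, I_{k,k;b_j}$ inside the target ring after the defining derivatives of $I_0, I_{1,1}$ are rewritten via the basic generators $\X_k$, $\Y$ (and Lemma~\ref{YamaguchiYauident} for second derivatives). A single application of $d/du$ to an element of the ring produces at most $\X_4$ (reducible by Lemma~\ref{YamaguchiYauident}), $\QQ_2$, or $\PP_3$, each of which is allowed. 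Powers of $t$ accumulate from $I_{k,k;a}\,t$ and $I_{k,k;b_j}\,t^j$ and combine with the $H$-index shifts (arising from multiplication by $H$) to produce the claimed $t^{k-i}$ prefactor.

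For the homogeneity claim, the powers $L^j$ with $j\in\{2,3,4,7,8,12\}$ that survive in the recursion can each be expressed as a homogeneous polynomial in $\Z_1, \Z_2, \Z_3$ of a definite degree by direct expansion using Remark~\ref{Zkpolynomial}. Substituting $\QQ_2 = \tilde\QQ - (\X-\Y)\QQ$ and $\PP_3 = \tilde\PP - (\X+\Y)\PP$ then trades the naive derivatives for the tilde-generators without spoiling homogeneity, since the correction terms have matching total degree. Any $\QQ_3$ or $\PP_4$ terms that might arise when $k=3$ are reduced via Lemma~\ref{diffequationsforPQ}. The main technical obstacle is checking, at each step $k\le 3$, that the various pieces of the recursion combine into a genuinely homogeneous polynomial of degree $k$: the operator $L\,d/du$ raises degree, the subsequent division by $L\I_k$ lowers it, and the shifts from $H$-multiplication and from the accumulated powers of $t$ must balance exactly. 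Once this degree accounting is verified, both statements of the lemma follow.
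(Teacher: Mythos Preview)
Your proposal is correct and follows essentially the same approach as the paper: both start from $\mS_0 = I_0^{-1}$, apply the Birkhoff factorization of Proposition~\ref{birkhoff} to step from $\mS_{k-1}$ to $\mS_k$, and invoke the relations of Lemma~\ref{Ikrelationsex} and Lemma~\ref{diffequationsforPQ} to stay inside the target ring. The only real difference is stylistic: the paper carries this out as a brute-force explicit computation (writing out $\mS_1,\mS_2,\mS_3$ in full and reading off the membership and homogeneity), whereas you phrase it as an inductive degree-counting argument; in practice the degree accounting you flag as ``the main technical obstacle'' is exactly what the paper verifies by displaying the formulas.
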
 \label{mSmatrix}
\begin{proof}
This lemma is a direct consequence of Proposition \ref{birkhoff} and the fact $\mS_0 = I_0^{-1}$. To be more explicit, by a direct Birkhoff factorization computation we have {\tiny
\begin{align*}
 \mS_1 = \, & \frac{1}{I_0\I_{1}} \Big( \big(L^4+5 \X) \cdot H -(\frac{L^4 }{5} +\QQ+\X\big)\cdot\t \Big) \\
 \mS_2 = \, & \frac{1}{I_0\I_{1}\I_{2}}   \Big(  \big(4 L^3-3 L^8+5 L^4 (\X+ \Y )+25\X\Y-25\X_2\big) \cdot H^2\\
 & \big({\frac {-16 L^3 + 17 {L}^{8}}{10}}+\frac{L^4}{2}\,(\X-4\Y)+5\,{\tilde \QQ} -10\X\Y+10\,{\X_2}-\frac{\I_2}{10}(L^4+5\X) \big)\cdot \t H\\
& {\frac {8L^3-11 {L}^{8}}{50}}+{\frac{{L}^{4} }{10}} \left( -3\X+2\Y
 \right)  -\PP-{\tilde \QQ}+\X\Y-{\X_2}+{\frac{\I_2}{50}}\,{ {
 \left( {L}^{4}+5\,Q+5\X \right)  } }\\
 \mS_3 = \, & \frac{1}{I_0\I_{1}^2\I_{2}}  \Big(
 \Big[ 36 L^{12}-47 L^7+12 L^2+(60 L^8  -55 L^3) \X+25 L^4 (\X^2+ \X_2)+125 (\X^3+3 \X \X_2+\X_3)\Big] \cdot H^3\\
 & +\Big[
 -{\frac {211 {L}^{12}-282 L^7 +67 {L}^{2}}{10}}+(33 {L}^{3}-31 {L}^{8})\X-\frac{5}{2} {L}^{4}{\X}^{2}-15\,{L}^{4}\X_2 -75(\X^3+3 \X \X_2+\X_3)+5\,{
  \tilde \QQ} ({L}^{4}+5 \X)
 \\ &\quad
 +(4L^3 -3 {L}^{8})\QQ+5\,{L}^{4}\QQ(\X+ \Y)
 +25\,\QQ(\X\Y -\X_2) %\\ &\quad
 + \frac{\I_1}{5}\left( 3\,{L}^{8}-4\,{L
}^{3}-5\,{L}^{4}(\X+\Y)+25(\X_2- \X\Y) \right)
 \Big]\cdot \t\, H^2 \\
 &+\Big[
 {\frac {103 {L}^{12}-131 {L}^{7}+31 {L}^{
2}}{25}} +{\frac {(26 {L}^{8}-23 L^3)\X}{5}}+ {L}^{4}(3\X_2-2\X^2) +15(\X^3+3 \X \X_2+\X_3)\\
&\quad  +5{\tilde \PP} +5\tilde \QQ(\QQ-2\X+\frac{L^4}{10}) +\left( L^4+5\X\right)\PP +{\frac {\QQ}{10}(17 {L}^{8}-16L^3+5 L^4 (\X-4\Y) )}   + 10\QQ(\X_2-\X\Y)
 \\&\quad - \frac{\I_1}{50}\left(50 \tilde \QQ+17 L^8-16 L^3+5 L^4 (\X-4 \Y)+100 (\X_2- \X \Y)\right) - \frac{\I_1\I_2}{50}\left( L^4+5\X\right)\Big] \cdot \t^2 H\\
 &+\Big[{\frac {-67 {L}^{12}+{74 {L}^{
7}}-9 {L}^{2}}{250}}+\frac{L^4}{10}(3 {\X}^{2}-2\X_2)-{
\frac {(7 {L}^{8}-L^3)\X}{25}}-(\X^3+3 \X \X_2+\X_3)\\
 & \quad   -\tilde \PP-(\frac{3}{10}
 {L}^{4}+\QQ+\X)\tilde \QQ   -( \QQ+\X+\frac{L^4}{5})\PP-\QQ(\X_2-\X\Y) +{\frac {\QQ}{50}}(8L^3-11 {L}^{8}) +\frac{ \QQ}{10}L^4(2\Y-3\X)
 \\
&\quad+\frac{\I_1}{250} \left(11 L^8-8 L^3+ L^4 (15\X-10 \Y)+50(\X_2- \X \Y)+50 P+50 \tilde \QQ \right) +\frac{\I_1\I_2}{250}\left(L^4+5\QQ+5\X\right)
-{\frac {\I_1^2\I_2}{250 }} \Big] \cdot \t^3
  \Big)
\end{align*}
}%
where we have used the relations between $\{I_{k,k;a}, I_{k,k;b} , \cdots \}$ presented in the proof of Lemma \ref{Ikrelationsex}  and the differential equations in Lemma \ref{diffequationsforPQ}
\end{proof}
Finally, by applying the formulae in the proof of Lemma \ref{mSmatrix} and the formula $$\mS^*(\t)(\tilde H_4)=\tilde H_4$$  in Lemma \ref{Hdual4} to
    equation \eqref{VtoS}, with
\begin{align*}
  &  \gamma_1\otimes \gamma_2 =\,\,
 \t^{-2}\,  \frac{ \frac{5}{3}H^3 \otimes H^4 \t^{-1}+\frac{5}{3} H^4 \otimes H^3 \t^{-1} +\frac{65}{8} H^4 \otimes H^4 \t^{-2} }{  1-\t^{-1}\otimes 5H -5H \otimes \t^{-1}+ 25 H\otimes H }  \\
 \text{or} \quad &  \quad \gamma_1\otimes \gamma_2 =\,\,
   \frac{-\frac{5}{3}H^3+\frac{5}{24}H^4 \t^{-1}}{ \t-5H  }\otimes
\frac{-\frac{5}{3}H^3+\frac{5}{24}H^4 \t^{-1}}{ \t-5H },
\end{align*}
we can write down the explicit formula of $L^2 \cdot \mV(t)$ as a homogeneous polynomial of degree $3$ in
$$\mathbb Q[\X,\X_2,\X_3,\Y,  \Z, \Z_2, \Z_3,\PP,\QQ,\tilde \PP ,\tilde \QQ] .$$
Note that here we have used
$$
I_0^2 \I_1^2 \I_2 = L^{2},\quad \deg L^2 =3 .
$$
The explicit computations exactly give us the formulae presented  in Proposition \ref{g0contp}.

\subsection{Proof of Proposition \ref{g1cont}}

There is only one \textit{graph with a genus $1$ quasimap vertex}, and
it has the contribution
$$
\Cont'_{\Gamma^{1}} := \frac{L^2}{I_0}  \left<\left<  \frac{-\frac{5}{3}H^3+\frac{5}{24}H^4 \t^{-1}}{(\t-5H)(\t-5H-\psi)} \right>\right>_{1,1}^{\t}.
$$
This type of twisted invariants can be computed by the following two
steps.
First, by using the modified $S$-matrix $\mS(\t)$, we can write the
modified descendent correlators in terms of the $\mS$ action on the
ancestor genus one invariants (see \cite{KoMa97} and \cite{Gi01b}).
Explicitly, for any
$\alpha \in H^*_{\mathbb C^*}(\bP^4)$, we have
\begin{equation*}
  \langle\langle \frac{\alpha}{\t -5H-\psi}\rangle\rangle_{1,1}^t
  = \langle\langle  \big[\frac {S(\bar \psi) \alpha}{\t-5H-\bar \psi}\big]_+ \rangle\rangle_{1,1}^t
  = \langle\langle  \frac {\mS(\t)\alpha}{\t-5H-\bar \psi}\rangle\rangle_{1,1}^t ,
\end{equation*}
where $\bar \psi$ is the pullback psi class from $\M_{1,1}$, we expand
$\frac{1}{\t-5H-\bar \psi}$ in terms of positive powers of $H$ and
$\psi$ with $S$ acting on the left, and the bracket $[\cdot]_+$ picks
out the terms with a non-negative power of $\psi$.

Next, we evaluate the ancestor invariants of the following form
$$
\langle\langle\gamma(\bar \psi)\rangle\rangle_{1,1}^t  = \int_{\M_{1,1}}  \Omega_{1,1}(\gamma(\psi_1))
$$
by using Givental--Teleman's reconstruction theorem \cite{Gi01b,
  Te12}.
We recall here only the general shape of the reconstruction theorem
and refer the reader to \cite{PPZ15} for more details.
For $2g-2+n>0$, the theorem says that
$$
\Omega_{g,n}(\gamma_1,\cdots,\gamma_n) = RT\omega_{g,n}(\gamma_1,\cdots,\gamma_n) = \sum_{\Gamma \in G_{g,n}} \frac{1}{|\Aut (\Gamma)|}  (\iota_\Gamma)_*T\omega_\Gamma(\gamma_1,\cdots,\gamma_n)
$$
where $G_{g,n}$ is the set of genus $g$, $n$ marked point stable
graphs, $\iota_\Gamma$ is the canonical morphism
$\iota_\Gamma\colon \M_\Gamma \rightarrow \M_{g,n}$, the translation
action $T$ is given by
$$
T\omega_{g,n}(-):= \sum_{k = 0}^\infty  \frac{1}{k!} (p_k)_*  \omega_{g,n+k}(-,T(\psi)^k)
$$
for the cohomological valued formal series
\begin{equation} \label{Tz}
  T(z)= T_1 z^2 +T_2 z^3+ T_3 z^4 +\cdots := z(\mathbf 1 - R^{-1}(z)\mathbf 1) ,
\end{equation}
and below we will define $\omega_\Gamma(\gamma_1,\cdots,\gamma_n)$ for
each graph under discussion.

In our case,  the only insertion $\gamma $ is given by
$$
\gamma(\psi) : = \mS(\t) \frac{-\frac{5}{3}H^3+\frac{5}{24}H^4 \t^{-1}}{(\t-5H)(\t-5H-\psi)} .
$$
The contribution can be written as a summation over two stable graphs:
$$
\Cont'_{\Gamma^{1}}=  \frac{L^2}{I_0} \int_{\M_{1,1}}  \Omega_{1,1}(\gamma(\psi)) =  \frac{L^2}{I_0} \left(\Cont_{\Gamma_a^{1}}+\Cont_{\Gamma_b^{1}} \right)
 $$
 where $\Gamma_a^{1}$ and $\Gamma_b^{1}$ are the following stable graphs
 $$\qquad  \Gamma_a^{1}:=\xy
 (9.5,0); (20,0), **@{-}; (20.2,-0.2)*+{\bullet};
 (21,-2)*+{\displaystyle{{}_{g=1}}};
(8,-0.2)* +{\gamma};
\endxy \quad  \quad \Gamma_b^{1}:= \xy
 (9.5,0); (20,0), **@{-}; (19.7,-0.2)*+{\bullet};  (25,-2)*+{\displaystyle{{}_{g=0}}};
(24.6,0)*++++[o][F-]{}  ; (8,-0.2)*+{\gamma};
\endxy .
$$
We now compute the contribution of each graph separately.

\subsubsection{Contribution of $\Gamma_a^{1}$}

We have
\begin{align*}
 T\omega_{\Gamma_a^1}( \gamma ): =\,&\,\,T\omega_{1,1}(R^{-1}(\psi) \cdot \gamma )\\
= \, &\,\, \omega_{1,1}(\gamma)-\psi_1\, \omega_{1,1}(R_1 \gamma)+\psi_1\, \omega_{1,2}(\gamma, T_1)\\
= \, & \,\,\sum_\alpha (e^\alpha, \gamma)^\t+\psi_1\,  \big(- (e^\alpha, R_1 \gamma)^\t+ (e^\alpha,\gamma)^\t(e^\alpha, T_1)^\t\big)
\end{align*}
where we have used
$$
\omega_{g,n} ( {\gamma_1},\cdots, {\gamma_n})= \Delta^{g-1} \prod_{i = 1}^n (e^\alpha,\gamma_i)^\t .
$$
In particular, we have
\begin{align*}
 T\omega_{\Gamma_a^1}( e_\alpha)
= \, & 1+  \psi_1\,  \big(\- ({R_1^* e_\alpha} , \sum  e^\beta)^\t +
 {(R_1)_0}^\alpha \big).
\end{align*}
Note that the formula for ${(R_1)_0}^\alpha$ is in equation \eqref{R1matrix}.
After a direct computation using the formula for ${(R_1)_0}^\alpha$ and the inductive formulae for the $\Psi$- and $R$-matrix of Section \ref{sec:Rmatrix},  we obtain
\begin{equation} \label{eq:R_a^b}
\begin{split}
 ({R_1^* e_\alpha} , \sum  e^\beta)^\t=&  \,\frac{5^5}{12\t} + \frac{5}{ \t \,\q_\alpha} +\frac{25}{2\t\,\q_\alpha^2 L ^2}(12 (L^5-1)L^2+60 L^3 \X +75 L^4 \tilde \QQ+250 \tilde \PP)\\\qquad
  & +\frac{625}{ \t\,\q_\alpha^3 L^3}( 2 (L^5-1)L^3 + L^4 (8\X+ \Y) +50 L \tilde \PP+(15 L^5 -5 )\tilde \QQ)\\
  & +\frac{625}{ 2\t\,\q_\alpha^4 L^4}(8 (L^5-1)L^4+10 L^5 (2 \X+  \Y)\\&\qquad\qquad\quad   +10 \X+250 L^2 \tilde \PP+25L (3 L^5 -2)   \tilde Q )
  \end{split}
 \end{equation}

\subsubsection{Contribution of $\Gamma_b^{1}$}

Let
$$
W(w,z):= \frac{\sum_\alpha e_\alpha \otimes e^\alpha -R^{-1}(z) e_\alpha \otimes  R^{-1}(w) e^\alpha}{w+z}
$$
then we have
$W(w,z) =W_0+W_1+W_2+\cdots$ where {\small
\begin{align*}
W_0=&\sum_\alpha  R_1 e_\alpha \otimes e^\alpha,\quad
 W_1= \sum_\alpha \Big(-R_2 z- R_2^*w  \Big)e_\alpha \otimes e^\alpha\\
W_2=&\sum_\alpha\Big(R_3z^2+ (R_1R_2-R_3) wz +R_3^*w^2\Big)e_\alpha \otimes e^\alpha
\end{align*}}%
The contribution from the loop type graph is given by
\begin{align*}
 T\omega_{\Gamma_b^1}(\gamma) :
 = \,\,  & T\omega_{0,3}(W(\psi) ,R^{-1}(\psi) \cdot \gamma )\\
= \,\, & \omega_{0,3}(W_0 , \gamma ) \,\,
= \,\,   \sum_\alpha \omega_{0,3}(e_\alpha, R_1e^\alpha, \gamma) \\
=\, \,& \sum_\alpha  \Delta_\alpha^{-1}(e^\alpha, R_1 e^\alpha )^\t  (e^\alpha,\gamma)^\t
 \,\, = \,\, \sum_\alpha  (e^\alpha,\gamma)^\t  (R_1)_{\bar\alpha \bar \alpha}
\end{align*}
In particular,
$$
 T\omega_{\Gamma_b^1}(e_\alpha) =  (R_1)_{\bar\alpha \bar \alpha}.
$$
Again after a direct computation using the formula for ${(R_1)_0}^\alpha$ and using the formulae from Section \ref{sec:Rmatrix}, we obtain
\begin{equation} \label{eq:Raa}
\begin{split}
(R_1)_{\bar\alpha \bar \alpha}  =&  \,\frac{5^5}{12\t} + \frac{5}{ \t \,\q_\alpha} +\frac{25}{2\t\,\q_\alpha^2 L ^2}(4 (L^5-1)L^2+20 L^3 \X +25 L^4 \tilde \QQ+50 \tilde \PP)\\
  & +\frac{125}{ \t\,\q_\alpha^3 L^3}( 2 (L^5-1)L^3 + 10 L^4  \X +50 L \tilde \PP+(25 L^5 -10)\tilde \QQ)\\
  & +\frac{625}{ 2\t\,\q_\alpha^4 L^4}(6 \X+2\Y+ 50 L^2 \tilde \PP+ 5L (4 L^5 -3)   \tilde Q )
  \end{split}
  \end{equation}

To summarize, given
$$
\gamma(\psi ) = \gamma_0+ \gamma_1\psi+\cdots
$$
we have
\begin{align*}
  \Cont_{\Gamma_a^{1}}   = \,&  \int_{\M_{1,1}} T\omega_{\Gamma_a^1}(\gamma) =  \frac{1}{24}   \sum_\alpha \left((\gamma_1,e^\alpha)^\t+  (\gamma_0, e^\alpha)^\t \Big(\-  \big({R_1^* e_\alpha} , \sum  e^\beta\big)^\t +
 {(R_1)_0}^\alpha \Big) \right) ,
   \\
    \Cont_{\Gamma_a^{2}}=  \,& \frac{1}{2}  \int_{\M_{0,3}} T\omega_{\Gamma_a^1}(\gamma) = \frac{1}{2} \sum_\alpha (\gamma_0, e^\alpha)^\t (R_1)_{\bar \alpha \bar \alpha} .
\end{align*}
Finally, by equation \eqref{R1matrix},  \eqref{eq:R_a^b} and \eqref{eq:Raa}, together with the explicit formula for the insertion {\footnotesize
\begin{align*}
(\gamma_0, e^\alpha)  =& \,\, \frac{\t \,I_0}{6000 L^2} \Big(5381L^{12}-7274L^7+1893L^2+L^8(8080\X+410 \Y) +25L^4(80\X_2-43 \X^2
+82 \X\Y)
\\& -8850L^3\X+20250(\X^3+3\X\X_2+ \X_3)-(1975L^4+10250\X)\tilde \QQ+250 \tilde \PP \Big)
-\frac{\t  \,\q_\alpha I_0}{15000}  \Big(3 L^5+\mathcal K L^2\\
&   +15 L \X+38\Big)-\frac{\t \,\q_\alpha^2 I_0}{75000}  \Big(2 L^5+\mathcal K L^2+10 L \X+39\Big)
-\frac{\t \, \q_\alpha^3L I_0}{375000}  \Big (L^4+\mathcal K L+5 \X\Big)-\frac{\t \,\q_\alpha^4 L^2 \mathcal K }{1875000} ,\\
(\gamma_1, e^\alpha)  = & \,\, \frac{ I_0}{6000 L^2} \Big( (2941 L^{12}-4314 L^7+1373 L^2)+(4880  \X+10 \Y) L^8+(1925  \X^2+50  \X \Y+2000  \X_2) L^4\\
&-6850 L^3  \X +10250  (\X^3+3\X_2 \X + \X_3) -\tilde \QQ (250 \X+ 2975 L^4)-9750 \tilde \PP  \Big) + \sum_{k=1}^4 (*)\,\, \q_\alpha^k
\end{align*}}%
where
$\mathcal K:=123 L^8-164 L^3-205 L^4 ( \X+ \Y)+1025 (\X_2- \X \Y)+25
\tilde \QQ$, and where $(*)$ denote some $\alpha$-independent
functions which we have omitted because they are irrelevant for the
further computations, we arrive at the following formulae.
\begin{lemma}
  Both $ \frac{L^2}{I_0} \Cont _{\Gamma_{a}^{1}}$ and
  $ \frac{L^2}{I_0} \Cont _{\Gamma_{b}^{1}}$ are degree $3$ homogeneous polynomials in the
  basic and extra generators, to be precise
  \begin{align*}
   \frac{L^2}{I_0}   \Cont_{\Gamma_a^{1}}=&\,\,\,\,\,\frac {5}{576} \big(\X +2 \Z_{{1}} \big)\tilde \QQ +{\frac {205}{576}}\big(\X_{{3}}+4\X\X
                           _{{2}}+{\X}^{3}-\Y{\X}^{2}-3\Z_{{1}}\Y\X\big)+{\frac {305\,\Z_{{1}}\X_{{2}}}{288}} \\
                         &  +{\frac {5\,\Z_{{1}}{\X}^{2}}{384}} +{\frac {3055\,\Z_{{2}}\X}{2304}}-{\frac {
                           95\,{\Z_{{1}}}^{2}\X}{144}}-{\frac {205\,{\Z_{{1}}}^{2}\Y}{288}}+{\frac {
                           9275\,\Z_{{1}}\Z_{{2}}}{13824}}+{\frac {5285\,\Z_{{3}}}{13824}}
    \\
  \frac{L^2}{I_0}   \Cont_{\Gamma_b^{1}}=&-{\frac {473 }{576}}\tilde \PP+\big({\frac {145\X }{576}}-\frac{\Y}{48} -{\frac {2113\Z_{{
                           1}} }{1152}}\big)\tilde \QQ-{\frac {45}{64}\,(\X_{{3}}+{\X}^{3})}-{\frac {299\,\X\X_{{2}}}{64}}\\
                         &+{
                           \frac {41\,\Y }{48}}(3\X^2-\X_{{2}}+\X\Y)-{\frac {665\,\Z_{{1}}\X_{{2}}}{144}}  +{\frac {2927\,\Z_{{1}}{\X}^
                           {2}}{1152}} +{\frac {4223\,\Z_{{1}}\Y\X}{576}}-
                           {\frac {621\,\Z_{{2}}\X}{256}}\\
                         &-{\frac {659\,{\Z_{{1}}}^{2}\X}{576}}+{
                           \frac {41}{48}}( \Z_{{1}}{\Y}^{2}-\Z_{{2}}\Y)+{\frac {
                           2747\,{\Z_{{1}}}^{2}\Y}{576}}-{\frac {29465\,\Z_{{1}}\Z_{{2}}}{4608}}-{
                           \frac {1555\,\Z_{{3}}}{4608}}
  \end{align*}
\end{lemma}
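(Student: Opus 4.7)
The plan is to finish the computation by substituting all the explicit expressions assembled earlier in this subsection and then summing over the five equivariant fixed points $\alpha \in \{0,1,2,3,4\}$.

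Every ingredient is in place. The displays immediately preceding the lemma give
\begin{equation*}
  \frac{L^{2}}{I_{0}}\,\Cont_{\Gamma_{a}^{1}}
  \;=\; \frac{L^{2}}{24\,I_{0}}\sum_{\alpha}\!\Bigl[(\gamma_{1},e^{\alpha})^{\t}
  \,-\,(\gamma_{0},e^{\alpha})^{\t}\bigl(R_{1}^{*}e_{\alpha},\textstyle\sum_{\beta}e^{\beta}\bigr)^{\t}
  \,+\,(\gamma_{0},e^{\alpha})^{\t}\,{(R_{1})_{0}}^{\alpha}\Bigr]
\end{equation*}
and
\begin{equation*}
  \frac{L^{2}}{I_{0}}\,\Cont_{\Gamma_{b}^{1}}
  \;=\; \frac{L^{2}}{2\,I_{0}}\sum_{\alpha}(\gamma_{0},e^{\alpha})^{\t}\,(R_{1})_{\bar{\alpha}\bar{\alpha}},
\end{equation*}
where $(\gamma_{0},e^{\alpha})^{\t}$ and $(\gamma_{1},e^{\alpha})^{\t}$ are stated explicitly in the display preceding the lemma, ${(R_{1})_{0}}^{\alpha}$ is given by \eqref{R1matrix}, and $(R_{1}^{*}e_{\alpha},\sum e^{\beta})^{\t}$ and $(R_{1})_{\bar{\alpha}\bar{\alpha}}$ are given by \eqref{eq:R_a^b} and \eqref{eq:Raa}.

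Carrying out the products, each summand becomes a Laurent polynomial in $\q_{\alpha} = -5\xi^{\alpha}q^{1/5}$ of bounded order, whose coefficients are polynomials in $L$, in the basic generators $\X,\X_{2},\X_{3},\Y$, in the extra generators $\QQ,\PP,\tilde{\QQ},\tilde{\PP}$, and in the auxiliary series $I_{0},\I_{1},\I_{2}$. Summing over $\alpha$ is then handled by the elementary identity
\begin{equation*}
  \sum_{\alpha=0}^{4}\q_{\alpha}^{k}
  \;=\; \begin{cases} 5\,(-5)^{k}\,q^{k/5}, & 5\mid k,\\ 0, & \text{otherwise,} \end{cases}
\end{equation*}
so that only monomials whose $\q_{\alpha}$-exponent is a multiple of $5$ contribute, each producing an integer power of $1-5^{5}q = L^{-5}$. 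Combined with the overall prefactor $L^{2}/I_{0}$ and the Zagier--Zinger relations \eqref{ZZrelations} (notably $I_{0}^{2}\I_{1}^{2}\I_{2}=L^{2}$), the factors $I_{0},\I_{1},\I_{2}$ cancel completely. Remark~\ref{Zkpolynomial} then converts the residual powers of $L$ into polynomials in $\Z_{1},\Z_{2},\Z_{3}$, producing the asserted expressions. Homogeneity of degree $3$ is forced by the degree bookkeeping of Section~\ref{sec:twist} together with the degree counts in Lemma~\ref{mSmatrix}.

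The main obstacle is purely the size of the intermediate expressions: each of $(\gamma_{0},e^{\alpha})^{\t}$, $(\gamma_{1},e^{\alpha})^{\t}$, $(R_{1}^{*}e_{\alpha},\sum e^{\beta})^{\t}$ and $(R_{1})_{\bar{\alpha}\bar{\alpha}}$ is a long rational function of $\q_{\alpha}$, so the calculation is best carried out with computer algebra. Two consistency checks catch errors efficiently: first, the $q^{1}$-coefficient of the combined result must match the direct fixed-point computation in Section~\ref{sec:geo:ex} via $\tfrac{1925}{576}=\tfrac{975}{64}-\tfrac{3425}{288}$; second, any inhomogeneous term in the final polynomial signals a mistake, since homogeneity of degree $3$ is a built-in consequence of the construction.
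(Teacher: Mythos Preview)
Your proposal is correct and follows exactly the paper's approach: substitute the explicit formulae for $(\gamma_0,e^\alpha)^\t$, $(\gamma_1,e^\alpha)^\t$, ${(R_1)_0}^\alpha$, $(R_1^*e_\alpha,\sum e^\beta)^\t$, and $(R_1)_{\bar\alpha\bar\alpha}$ already assembled in the subsection, sum over $\alpha$, and simplify. The paper itself offers no further argument beyond ``by equation \eqref{R1matrix}, \eqref{eq:R_a^b} and \eqref{eq:Raa}, together with the explicit formula for the insertion \ldots\ we arrive at the following formulae,'' so your outline, together with the summation identity for $\sum_\alpha \q_\alpha^k$ and the conversion of $L$-powers via Remark~\ref{Zkpolynomial}, is in fact more detailed than what the paper provides.

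One small remark: the auxiliary series $\I_1,\I_2$ have already been eliminated in the paper's displayed expressions for $(\gamma_0,e^\alpha)^\t$ and $(\gamma_1,e^\alpha)^\t$, so you will not see them reappear at this stage; only $I_0$ remains as an overall factor, and it cancels against the prefactor $L^2/I_0$ together with the $\t$-powers.
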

The proposition is a direct consequence of the above lemma.

\subsection{Proof of Proposition \ref{g2cont}}

Finally, we deal with the remaining \textit{graph with a single genus
  two quasimap vertex}.
It has the contribution
$$
\Cont'_{\Gamma^{2}} := \frac{L^2}{I_0^2} \cdot
\left<\left<   \right>\right>_{2,0}^{\t} .
$$
Again using Givental--Teleman's theorem, we see that we need to compute
$$
\left<\left<   \right>\right>_{2,0}^{\t}  = \int_{\M_{2,0}} \Omega_{2,0}
= \sum_{\Gamma \in G_{2,0}} \frac{1}{|\Aut (\Gamma)|} \Cont_\Gamma.
$$
There are $7$ stable graphs in $G_{2,0}$:
$$
\Gamma^2_1:=\xy
 (19.8,-0.2)*+{\bullet};
 (21,-2)*+{\displaystyle{{}_{g=2}}}\endxy
\qquad
 \Gamma^2_2:=\xy
 (9.5,0); (20,0), **@{-}; (19.8,-0.2)*+{\bullet};
(11,-2)*+{\displaystyle{{}_{g=1}}}; (21,-2)*+{\displaystyle{{}_{g=1}}};
(10.2,-0.2)*+{\bullet};
\endxy
\qquad
\Gamma^2_3:=\xy
  (19.8,-0.2)*+{\bullet};
(16,-2)*+{\displaystyle{{}_{g=1}}};
(24.6,0)*++++[o][F-]{}
\endxy \qquad
\Gamma^2_4:=\xy
 (9.5,0); (20,0), **@{-}; (19.8,-0.2)*+{\bullet};
(11,-2)*+{\displaystyle{{}_{g=1}}}; (25,-2)*+{\displaystyle{{}_{g=0}}};
(24.6,0)*++++[o][F-]{}  ; (10.2,-0.2)*+{\bullet};
\endxy
$$
$$
\Gamma^2_5:= \xy
  (20,-0.2)*+{\bullet};
(16,-2)*+{\displaystyle{{}_{g=0}}};
(25,0)*++++[o][F-]{}  ;(15.2,0)*++++[o][F-]{};
\endxy
\qquad\Gamma^2_6:=\xy
 (9.5,0); (20,0), **@{-}; (19.8,-0.2)*+{\bullet};
(16,-2)*+{\displaystyle{{}_{g=0}}};
(24.6,0)*++++[o][F-]{}  ;(5.6,0)*++++[o][F-]{};(10.4,-0.2)*+{\bullet};
\endxy
\qquad
\Gamma^2_7:=\xy
 (9,0); (18,0), **@{-}; (18.8,-0.2)*+{\bullet};
(21,-2)*+{\displaystyle{{}_{g=0}}};
(14,0)*++++[o][F-]{}  ; (9.2,-0.2)*+{\bullet};
\endxy
$$
Hence we can write the contribution as summation over contributions of the stable graphs:
\begin{equation}
  \label{eq:g2graphsum}
  \Cont'_{\Gamma^{2}}=  \frac{L^2}{I_0^2}\left( {\Cont_{\Gamma^2_1}}+
\frac{{\Cont_{\Gamma^2_2}}}{2}+\frac{{\Cont_{\Gamma^2_3}}}{2}+\frac{{\Cont_{\Gamma^2_4}}}{2}+
\frac{{\Cont_{\Gamma^2_5}}}{8}+
\frac{{\Cont_{\Gamma^2_6}}}{8}+\frac{{\Cont_{\Gamma^2_7}}}{12} \right)
\end{equation}
In the remaining part of this section, we compute each of these contributions.

\subsubsection{Trivial graph}

The first graph $\Gamma^2_1$ is a single genus-two vertex.
We directly compute its contribution:
{\small
\begin{align*}
& \quad \Cont_{\Gamma^2_1} = \int_{\M_{2,0}} T\omega_{2,0}  \\
 = \,& \int_{\M_{2,0}} \left(\omega_{2,0}+(\pi_1)_*\omega_{2,1}(T(\psi)) + \frac{1}{2!}(\pi_2)_*\omega_{2,2}(T(\psi)^2)+ \frac{1}{3!}(\pi_3)_*\omega_{2,3}(T(\psi)^3)\right)\\
=\,& \quad  \omega_{2,1}(T_3) \int_{\M_{2,1}} \psi_1^4 + \frac{2}{2}\omega_{2,2}(T_1,T_2) \int_{\M_{2, 2}} \psi_1^2\psi_2^3
+ \frac{1}{6}\omega_{2,3}(T_1^3) \int_{\M_{2, 3}} \psi_1^2\psi_2^2\psi_3^2 \\
=\,& \sum_\beta\Delta_\beta \Big(\frac{1}{1152}  (e^\beta,T_3)^\t  +  \frac{ 29}{5760}  (e^\beta,T_1)^\t (e^\beta,T_2)^\t
+ \frac{7}{6\cdot 240} \left((e^\beta,T_1)^\t \right)^3   \Big)
\end{align*}}%
where we have used some known intersection numbers in $\M_{g,n}$.

\subsubsection{One-edge graphs}

There are two graphs with one edge:
$$
\Gamma^2_2:=\xy
 (9.5,0); (20,0), **@{-}; (20.2,-0.2)*+{\bullet};
(11,-2)*+{\displaystyle{{}_{g=1}}}; (21,-2)*+{\displaystyle{{}_{g=1}}};
(10.2,-0.2)*+{\bullet};
\endxy \qquad
\Gamma^2_3:=\xy
  (19.8,-0.2)*+{\bullet};
(16,-2)*+{\displaystyle{{}_{g=1}}};
(24.6,0)*++++[o][F-]{}
\endxy
$$
For the computation of $\Cont_{\Gamma^2_2}$, it is useful to notice
that
 \begin{align*}
   T\omega^\tau _{1,1}(  e_\beta)
 =\,&   1 +  (e^\beta,T_1)^\t  \cdot \psi  .
\end{align*}
With this, we can compute:
{\small
 \begin{align*}
  \textrm{Cont}_{\Gamma^2_2} =\, &\int_{\M_{1,1}\times \M_{1,1}} (T\omega_{1,1}\otimes  T\omega_{1,1}) (W(\psi_1,\psi_2)) \\
 =\,& \frac{1}{24^2} \sum_{\alpha,\beta}  \Big( (e^\beta,T_1)^\t (e^\alpha,R_1e^\beta)^\t (e^\alpha,T_1)^\t -2
      (e^\alpha,( R_2^*) e^\beta)^\t (e^\beta,T_1)^\t\\
   &\qquad \qquad  +    (e^\alpha,(R_1R_2-R_3) e^\beta )^\t \Big)
 \end{align*}}%

We next compute $\Cont'(\Gamma^2_3)$ using
\begin{align*}
 T\omega_{1,2}(-) = \,&   \omega_{1,2}(-)+(\pi_1)_*\omega_{1,3}(-,T(\psi)) + \frac{1}{2!} (\pi_2)_*\omega_{1,4}(-,T(\psi)^2)
\end{align*}
and $R_2+R_2^*=R_1^2$: {\small
\begin{align*}
\Cont_{\Gamma^2_3} =\, &  \int_{\M_{1,2}} T\omega_{1,2}(W(\psi_1,\psi_2)) \\
=\,&  \left< \omega_{1,2}(W_2(\psi_1,\psi_2))\right>_{1,2}+ \left< \omega_{1,3} (W_1(\psi_1,\psi_2),T_1),\psi_3^2\right>_{1,3}\\
&\qquad+\omega_{1,3}(W_0,T_2)\left< \psi_3^3\right>_{1,3}
 + \frac{1}{2!} \omega_{1,4}(W_0,T_1^2)\left<\psi_3^2\psi_4^2\right>_{1,4}
 \\=\,& \sum_\beta \Delta_\beta\Big(\frac{1}{24}  (e^\beta, (R_3^* +R_1R_2  )e_\beta)^\t  - \frac{1}{12} (e^\beta,(R_2+R_2^*)e_\beta)^\t  (e^\beta,T_1)^\t  \\
&\qquad+\frac{1}{24} (e^\beta, R_1 e_\beta)^\t (e^\beta,T_2)^\t
 + \frac{1}{2\cdot 6} (e^\beta, R_1 e_\beta)^\t \left((e^\beta,T_1)^\t\right)^2 \Big)
\end{align*}}

\subsubsection{Two-edge graphs}

There are two graphs with two edges:
{$$
\Gamma^2_4:=\xy
 (9.5,0); (20,0), **@{-}; (19.8,-0.2)*+{\bullet};
(11,-2)*+{\displaystyle{{}_{g=1}}}; (25,-2)*+{\displaystyle{{}_{g=0}}};
(24.6,0)*++++[o][F-]{}  ; (10.2,-0.2)*+{\bullet};
\endxy\qquad
\Gamma^2_5:=\xy
  (20.2,-0.2)*+{\bullet};
(16,-2)*+{\displaystyle{{}_{g=0}}};
(25,0)*++++[o][F-]{}  ;(15.2,0)*++++[o][F-]{};
\endxy
$$}
We compute $\Gamma^2_4$ as follows:
{\small
 \begin{align*}
    \Cont_{\Gamma^2_4}=\,&\int_{\M_{1,1}} (T\omega_{1,1} \otimes \omega_{0,3}) (W_0+W_1,W_0) \\
 =\,& \frac{1}{24}\Big( \sum_{\alpha,\beta}  (e^\beta,T_1)^\t (e^\beta,R_1 e_\alpha)^\t (e^\alpha, R_1 e^\alpha)^\t
 +  (e^\beta,-R_2^* e_\alpha)^\t (e^\alpha, R_1 e^\alpha)^\t \Big) \\
 =\,& \frac{1}{24} \sum_{\alpha,\beta} \Delta_\alpha^{1/2}\Delta_\beta^{1/2} \Big( (e^\beta,T_1) ^\t (R_1)_{\alpha\beta} (R_1)_{\alpha\alpha}
 - (R_1)_{\alpha\alpha}(R_2^*)_{\beta\alpha} \Big)
 \end{align*}}
For $\Gamma^2_5$, recalling
$$
T \omega _{0,4}(-) = \omega _{0,4}( -) +  \omega _{0,5}(-, T_1)\cdot  \psi_1,
$$
we can write {\small
\begin{align*}
   \Cont_{\Gamma^2_5}=\,&\int_{\M_{0,4}} T \omega _{0,4}(W \otimes W)\\
=\,&
\omega_{0,5}(W_0^2,T_1) -2\sum_\alpha \omega_{0,4}(W_0 , (R_2+R_2^*)e_\alpha,e^\alpha)\\
=\,&\sum_\beta \Delta_\beta \Big(  \left((e^\beta,R_1 e_\beta )^\t\right)^2 (e^\beta,T_1)^\t -2  (e^\beta,R_1 e_\beta )^\t (e^\beta,(R_2+R_2^*) e_\beta )^\t\Big)
\end{align*}}

\subsubsection{Three-edge graphs}

There are also two graphs with three edges:
$$\Gamma^2_6:=\xy
 (9.5,0); (20,0), **@{-}; (19.8,-0.2)*+{\bullet};
(16,-2)*+{\displaystyle{{}_{g=0}}};
(24.6,0)*++++[o][F-]{}  ;(5.6,0)*++++[o][F-]{};(10.4,-0.2)*+{\bullet};
\endxy
\qquad
\Gamma^2_7:=\xy
 (9,0); (18,0), **@{-}; (18.7,-0.2)*+{\bullet};
(21,-2)*+{\displaystyle{{}_{g=0}}};
(14,0)*++++[o][F-]{}  ; (9.2,-0.2)*+{\bullet};
\endxy
$$
The computation of their contributions is similar to the previous ones: {\small
\begin{align*}
\Cont_{\Gamma^2_6} = &\, (\omega_{0,3}\otimes \omega_{0,3}) (W_0\otimes W_0\otimes W_0)\\
  = \,& \sum_{\alpha, \beta}  (R_1 e^\beta,e^\beta)^\t (e_\alpha *_\tau R_1 e^\alpha,  R_1 e_\beta)^\t\\
  =\, &   \sum_{\alpha, \beta} (\Delta_{\alpha}  \Delta_{\beta})^{1/2}  (R_1)_{\alpha\alpha} (R_1)_{\beta\beta} (R_1)_{\alpha\beta}
\\
 \Cont_{\Gamma^2_7} =&\,% (\omega_{0,3} \otimes \omega_{0,3}) (W_0\otimes W_0\otimes W_0) =
 \,\sum_{\alpha, \beta}  (\Delta_{\alpha}  \Delta_{\beta})^{1/2} (R_1)_{\alpha\beta}^3
\end{align*}}%

\subsubsection{Conclusion}

For $k=1,2,\cdots, 7$, we introduce
$$
\Cont'_{\Gamma^2_{k}} :=  \frac{L^2}{I_0^2} \Cont_{\Gamma^2_{k}} .
$$
After a long but direct computation using the $R$-matrix formulae in
Section \ref{sec:Rmatrix}, we arrive at the following proposition.
\begin{proposition}
  \label{prop:g2graphs}
Both $\Cont'_{\Gamma^2_{1}}$ and $\Cont'_{\Gamma^2_{2}}$ are degree $3$ homogeneous polynomials in the basic generators: { \small
\begin{align*}
\Cont'_{\Gamma^2_1} = &\,\,\,  \frac{1}{2^{15}3^6} (24475 \Z_3 -17565 \Z_1 \Z_2)\\
\Cont'_{\Gamma^2_2} = &\,\,\, {\frac {5}{576}}\big(\X_{{3}}+ 4 \X\X_{{2}}+{\X}^{3} - \Y{\X}^{2}\big) +{\frac {5}{144}}\big( \Z_{{1}
}\X_{{2}}-\Z_{{1}}\Y\X -  \Z_{
{1}} ^{2}(\X+\Y)\big) \\
 &+{\frac {25}{768}}\,\Z_{{2}}\X  + \frac{1}{2^{14}3^6} \big( 139157 \Z_3-12531 \Z_1 \Z_2\big)
\end{align*}
}%
Both $\Cont'_{\Gamma^2_{3}}$ and $\Cont'_{\Gamma^2_{4}}$ are degree $3$ homogeneous polynomials in the basic and extra generators:
 {\small
 \begin{align*}
\Cont'_{\Gamma^2_3} = & \,\,\, {\frac {289 }{6912}}\tilde \PP+{\frac {1345}{13824}}\,\Z_{{1}} \tilde \QQ \\
&-\frac{1}{24}\,\big(\X_{{3}}+2\X\X_{{2
}}+{\X}^{3}+\Y{\X}^{2}\big)-\frac{1}{6}\,\Z_{{1}}\big({\X}^{2}+ \Y\X\big)\\
&- {
\frac {3347\,\Z_{{2}}\X+390\,{\Z_{{1}}}^{2}\X-682\,\Z_{{2}}\Y-1630\,{\Z_{{1}}}^{2}\Y}{13824}} +\frac{ 21573 \Z_1 \Z_2 -68155\Z_3 }{2^{12}3^{5}}
 \\
\Cont'_{\Gamma^2_4} = & -{\frac {289 }{6912}}\tilde \PP +\big({\frac {5\,\X }{288}}-{\frac {865\,\Z_{{1}} }{
13824}}\big)\tilde \QQ \\
& +\frac{1}{24} (3\X+\Y)(\X\Y-\X_2)  -{\frac {5\,\Z_{{1}}\X_{{2}}}{24}} +{\frac {53\,\Z
_{{1}}{\X}^{2}}{192}} +\frac{1}{12}\,\Z_{{1}}{\Y}^{2}+{\frac {151\,\Z_{{1}}\Y\X}{288}}\\&-{
\frac {205\,\Z_{{2}}\X}{13824}}+{\frac {1055\,{\Z_{{1}}}^{2}\X}{2304}}-{\frac {235\,\Z_{{2}}\Y}{6912}} +{\frac {3455\,{\Z_{{1}
}}^{2}\Y}{6912}}+{\frac {46085\,\Z_{{1}}\Z_{{2}}-6875\,
\Z_{{3}}}{331776}}
\end{align*}
}%
The sum $\Cont'_{\Gamma^2_{5}} + \Cont'_{\Gamma^2_{6}} + \Cont'_{\Gamma^2_{7}}$ is a degree $3$ homogeneous polynomial in the basic and extra generators:
 {\small
 \begin{align*}
\Cont'_{\Gamma^2_{5,6,7}} := & \quad  \frac{1}{8}\Cont'_{\Gamma^2_{5}}+ \frac{1}{8}\Cont'_{\Gamma^2_{6}}+ \frac{1}{12}\Cont'_{\Gamma^2_{7}}
\\
 = & \,
-{\frac {5}{1152}}\tilde \PP-\big(\frac{3\X+\Y}{48} +{\frac {245\Z_{{1}}}{2304}}\big) \tilde \QQ\\
&-\frac{1}{24}\,\big({\X}^{3}+ 3\,\Y{\X}^{2}+3\,{\Y}^{2}\X+{\Y}^{3}+8\,\Z_{
{1}}{\X}^{2}+ 11\,\Z_{{1}}(\Y\X+\frac{\Y^2}{2})+\Z_{{2}}\Y\big)\\
&+{\frac {161\,\Z_{
{2}}\X}{2304}}-{\frac {325\,{\Z_{{1}}}^{2}\X}{384}}   -{\frac {605\,{\Z_{{1}}}^{2}\Y}{
1152}}-{\frac {15449\,\Z_{{1}}\Z_{{2}}}{27648}}+{\frac {5225\,\Z_{{3}}}{
82944}}
\end{align*}
}%
\end{proposition}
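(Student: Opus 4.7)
The plan is to take the seven expressions for $\Cont_{\Gamma^2_k}$ ($k=1,\dots,7$), already reduced in the preceding subsections to sums over the fixed points $\alpha \in \{0,\dots,4\}$ of products of $R$-, $\Psi$-, and $T$-vector entries paired with known intersection numbers on $\M_{g,n}$, and carry out the substitution and simplification that identifies each with the claimed degree-$3$ polynomial in the generators.

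First, I would complete the table of matrix entries. Proposition~\ref{prop:R1asym} provides $(R_k)_{0}{}^{\alpha}$ for $k=1,2,3$; the remaining entries $(R_k)_i{}^\alpha$ required (up to $i=4$ for the trivial graph) are produced by the inductive Birkhoff formulas recorded in Section~\ref{sec:Rmatrix}, recursing in $i$. The $T$-vector components are then read off from $T(z) = z(\mathbf 1 - R^{-1}(z)\mathbf 1)$: each $T_k$ is a universal polynomial in $R_1,R_2,R_3$ applied to $\mathbf 1$, so each $(e^\beta, T_k)^\t$ is an explicit rational function of $\q_\beta$, $L_\beta$, $I_{1,1}$, $I_{2,2}$ computed from the entries just produced.

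Next, for each graph, I would substitute these entries and collapse the sums over $\alpha$ (and over $\alpha,\beta$ for the multi-edge graphs). Every contribution reduces to a sum of the form $\sum_\alpha f(\q_\alpha)\, g(q)$ (or a double sum), and the root-of-unity structure of $\q_\alpha = -5\xi^\alpha q^{1/5}$ allows partial-fraction decompositions to collapse these sums to rational functions of $q$: positive-power sums $\sum_\alpha \q_\alpha^k$ vanish unless $5 \mid k$, while sums like $\sum_\alpha (1+\q_\alpha)^{-m}$ are evaluated via repeated differentiation of the identity $\sum_\alpha (1+\q_\alpha)^{-1}$ applied to Lemma~\ref{lem:minimalpolynomial}. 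Finally, I would rewrite each reduced expression in the required polynomial basis using the tools of Section~\ref{sec:structure}: the relations between the $I_{k,k;*}$ in Lemma~\ref{Ikrelationsex} (reducing everything to $I_0$, $I_{1,1}$, $I_{2,2}$, $L$, $\PP$, $\QQ$); the Yamaguchi--Yau identities Lemma~\ref{YamaguchiYauident} (eliminating $\Y_2$, $\X_4$, and $\Z_k$ for $k \geq 4$); the differential identities Lemma~\ref{diffequationsforPQ} (writing all derivatives of the extra generators in terms of $\tilde \PP$, $\tilde \QQ$); and Remark~\ref{Zkpolynomial} (converting powers of $L$ into polynomials in $\Z_k$). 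Homogeneity of degree~$3$ provides a running consistency check, as does the fact that only $\tilde \PP$ and $\tilde \QQ$ (not $\PP$ or $\QQ$ with independent derivatives) should appear on the right.

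The main obstacle is the sheer volume of symbolic manipulation rather than any single conceptual step. The three-edge graphs $\Gamma^2_6$, $\Gamma^2_7$ involve triple $(R_1)$-products with non-trivial off-diagonal entries indexed by pairs $(\alpha,\beta)$, and $\Gamma^2_1$ requires the third-order entries $(R_3)_i{}^\alpha$ for $i$ up to~$4$, whose explicit form via the Birkhoff recursion is already lengthy. In practice these steps are best executed in a computer algebra system, organized so that each $\Cont_{\Gamma^2_k}$ is first simplified to a rational function of $q$ alone (or an expression in $L$ and the $I_{k,k;*}$), after which the identification with the claimed homogeneous degree-$3$ polynomial in the generators is either verified by direct algebraic reduction using the relations above, or checked coefficient-by-coefficient in a $q$-expansion against the leading data of Remark~\ref{numer}, the coefficients of the generic ansatz being uniquely determined by sufficiently many terms.
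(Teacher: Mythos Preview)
Your proposal is correct and follows essentially the same approach as the paper, which states only that the proposition is obtained ``after a long but direct computation using the $R$-matrix formulae in Section~\ref{sec:Rmatrix}.'' Your outline accurately unpacks what that computation entails: substituting the $R$-, $\Psi$-, and $T$-entries produced by the Birkhoff recursion into the graph-sum expressions, collapsing the root-of-unity sums over $\alpha$, and rewriting the result in the generator ring via Lemmas~\ref{Ikrelationsex}, \ref{YamaguchiYauident}, \ref{diffequationsforPQ} and Remark~\ref{Zkpolynomial}.
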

Proposition~\ref{g2cont} now follows directly from \eqref{eq:g2graphsum}
and Proposition~\ref{prop:g2graphs}.

\smallskip

For reference, we also provide individual formulae for
$\Cont'_{\Gamma^2_{5}}$, $\Cont'_{\Gamma^2_{6}}$ and
$\Cont'_{\Gamma^2_{7}}$.
They are rational functions in the basic and extra generators.
\begin{lemma}
Introduce the following inhomogeneous polynomial
\begin{align*}
\mathcal E:= \,\,\,& \Big( 4\, \tilde Q ^{2}+2\, \left( 3\,\X+\Y
 \right)  \tilde P  \Big)  \,L-20\, \tilde Q \tilde P     \,{L}^{2}+25\, \tilde P ^{2} \,{L}^{3}\\
 &+\frac{1}{5}\tilde Q
 ( 3\,\X+\Y ) \, {L}^{5}-10\, \tilde Q ^{2} \,
{L}^{6}+25\, \tilde Q  \tilde P
\, {L}^{7}+\frac{1}{25} \left( 3\,\X+\Y \right) ^{2}{L}^{9}+{\frac {25
 }{4}}\tilde Q ^{2}{L}^{11}  .
\end{align*}
We have the following polynomiality for $\Cont'_{\Gamma^2_{5}}$ , $\Cont'_{\Gamma^2_{6}}$ and $\Cont'_{\Gamma^2_{7}}$: {\small
 \begin{align*}
\Cont'_{\Gamma^2_{5}} = & \, - \frac{3 \mathcal E}{L^5-1}+ {\frac {1183 }{90}}\tilde \PP -\big({\frac {79\,\X }{6}}+{\frac {71\,\Y }{10}}-{\frac {
3221\,\Z_{{1}} }{360}}\big) \tilde \QQ  \\
&  +\frac{1}{5}\big(6 \,{\X}^{3}-2 \,\Y{\X}^{2}-8 \,{\Y}^{2}\X\big)+{\frac {101\,
\Z_{{1}}{\X}^{2}}{4}}+{\frac {431\,\Z_{{1}}\Y\X}{30}}+{\frac {43\,\Z_{
{1}}{\Y}^{2}}{20}}\\
&  -{\frac
{119\,\Z_{{2}}\X}{72}}+{\frac {61\,\Z_{{2}}\Y}{36}}+{\frac {879\,{\Z_{{1}}}^{2}\X}{20}}+{\frac {557\,{\Z_{{1}}}^{2
}\Y}{180}}+{\frac {176539\,\Z_{{1}}\Z_{{2}}}{6912}}-{\frac {17389\,\Z_{{3}
}}{6912}}
\\
\Cont'_{\Gamma^2_{6}} = & \,\,\,\, \frac{ \mathcal E}{L^5-1} -{\frac {869}{240}}\tilde \PP+\big({\frac {21\,\X }{5}}+{\frac {11\,\Y }{5}}-{\frac {
439\,\Z_{{1}} }{288}}\big) \tilde \QQ\\
&-\frac{1}{5}\big(7 \,{\X}^{3}+9 \,\Y{\X}^{2}+3
\,{\Y}^{2}\X+{\Y}^{3}\big)-{\frac {877\,\Z_{{1}}{\X}^{2}}{60}}-{\frac {359\,\Z_{{1}}\Y\X}{30}} -{\frac {51\,\Z_{{1}}{\Y}^{
2}}{20}}\\
& -{\frac {33\,\Z_{{2}}\X}{32}}-{\frac {31\,\Z_{{2}}\Y}{72}} -{
\frac {967\,{\Z_{{1}}}^{2}\X}{48}}-{\frac {1087\,{\Z_{{1}}}^{2}\Y}{144}
}-{\frac {76795\,\Z_{{1}}\Z_{{2}}}{6912}}+{\frac {6565\,\Z_{{3}}}{6912}}
\\
\Cont'_{\Gamma^2_{7}} = & \, \,\,\, \frac{3 \mathcal E}{L^5-1}
-{\frac {1147 }{80}}\tilde \PP+\big({\frac {127\,\X }{10}}+{\frac {71\,\Y }{10}}-{
\frac {5957\,\Z_{{1}} }{480}}\big)\tilde \QQ\\
&+\frac{1}{5} \big(-\,{\X}^{3}+9 \,\Y{\X}^{2}+9 \,{\Y}^{2}\X-\,{\Y}^{3}\big)-{
\frac {399\,\Z_{{1}}{\X}^{2}}{20}}-{\frac {91\,\Z_{{1}}\Y\X}{
10}}-{\frac {43\,\Z_{{1}}{\Y}^{2}}{20}}\\
& +{\frac {467\,\Z_{{2}}\X}{96}}-{\frac {67\,\Z_{{2}}\Y}{48}}-{\frac {3669\,{\Z_{{1}}}^{2}\X}{80}}
+{\frac {91\,{\Z_{{1}}}^{2}\Y}{240}}-{\frac {65321\,\Z_{{1}}\Z_{{2}}}{2304
}}+{\frac {21461\,\Z_{{3}}}{6912}}
\end{align*}
}
\end{lemma}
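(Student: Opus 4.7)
The plan is to compute each of $\Cont'_{\Gamma^2_5}$, $\Cont'_{\Gamma^2_6}$, and $\Cont'_{\Gamma^2_7}$ separately using the graph-sum formulas already set up in the preceding subsection, and then to read off the common singular piece $\mathcal E/(L^5-1)$. Those formulas express each contribution as a sum over canonical coordinates of products of entries of the $R$-matrix and Hodge factors $\Delta_\alpha^{1/2}$. The needed data --- diagonal entries $(R_k)_{\bar\alpha\bar\alpha}$ for $k=1,2$, off-diagonal entries $(R_1)_{\bar\alpha\bar\beta}$ ($\alpha\neq\beta$), and the translation coefficients $(e^\alpha,T_k)^\t$ --- are all rational functions of the canonical coordinates $L_\alpha = -\frac{\t}{5}\cdot\frac{\q_\alpha}{1+\q_\alpha}$. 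They can be produced by continuing the Birkhoff factorization procedure of Section~\ref{sec:Rmatrix} one step further than what is needed for $\Cont'_{\Gamma^2_1}$--$\Cont'_{\Gamma^2_4}$, so that the off-diagonal entries become accessible.

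Once substituted, every contribution becomes a symmetric-function sum over the five $L_\alpha$'s which, by Lemma~\ref{lem:minimalpolynomial}, are the roots of $x^5 - q(5x-\t)^5 = 0$. Such sums evaluate in closed form via Newton--Girard identities (or equivalently as residues at $x=\infty$ of the corresponding rational function), giving a rational expression in $q$ and $\t$ which can then be re-expressed in terms of the basic and extra generators $\X,\Y,\X_k,\Z_k,\QQ,\PP,\tilde\QQ,\tilde\PP$ by applying the relations of Section~\ref{Ikrelations} and Lemma~\ref{diffequationsforPQ}. The $\Cont_{\Gamma^2_5}$ sum is purely diagonal and should separate cleanly into a polynomial part plus a singular remainder, whereas $\Cont_{\Gamma^2_6}$ and $\Cont_{\Gamma^2_7}$ involve off-diagonal $(R_1)_{\bar\alpha\bar\beta}$ factors that carry explicit denominators $L_\alpha-L_\beta$.

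The origin of the $\mathcal E/(L^5-1)$ term can be traced precisely to these off-diagonal denominators: when summed over $\alpha\neq\beta$, the antisymmetric combination containing $1/(L_\alpha-L_\beta)$ produces, by partial fractions, a rational function whose total denominator is proportional to $\prod_\alpha L_\alpha$. Since $\prod_\alpha L_\alpha$ is a constant multiple of $q\,\t^5$ while $L^5-1 = 5^5 q\cdot L^5$, this factor reassembles into the expected $L^5-1$. The specific weights $-3,\,+1,\,+3$ on $\mathcal E/(L^5-1)$ in the three contributions are internally forced, as a consistency check, by the requirement that $\tfrac18\Cont'_{\Gamma^2_5}+\tfrac18\Cont'_{\Gamma^2_6}+\tfrac1{12}\Cont'_{\Gamma^2_7}$ reproduce the polynomial expression of Proposition~\ref{prop:g2graphs}, which one confirms via $-\tfrac38+\tfrac18+\tfrac14=0$.

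The main obstacle, as usual in this paper, is the sheer algebraic size: the entries of $R_1,R_2,R_3$ are already high-degree rational expressions in $L_\alpha$ (cf.\ Lemma~\ref{asymptoticofI2}), and the genus-two loop-type sums involve triple products of such entries over a product of fibers indexed by the five canonical coordinates. A symbolic-algebra computation is essentially unavoidable both to carry out the substitutions and to recognize that the final singular numerator matches the particular polynomial $\mathcal E$ stated in the lemma. An independent check is to compare the first few $q$-series coefficients of each individual $\Cont'_{\Gamma^2_i}$ against a direct torus-localization calculation in the spirit of Section~\ref{sec:geo:ex}.
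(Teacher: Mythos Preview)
Your overall approach is correct and is precisely what the paper does: the paper gives no separate proof for this lemma, stating it only ``for reference'' after Proposition~\ref{prop:g2graphs}, and both that proposition and this lemma are obtained by the same ``long but direct computation using the $R$-matrix formulae in Section~\ref{sec:Rmatrix}''. So the computational plan---substitute the $R$-matrix and $\Delta_\alpha$ data into the graph-sum expressions for $\Cont_{\Gamma^2_5}$, $\Cont_{\Gamma^2_6}$, $\Cont_{\Gamma^2_7}$ and simplify---matches the paper exactly.

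One correction to your heuristic, though: you attribute the singular piece $\mathcal E/(L^5-1)$ to the off-diagonal denominators $L_\alpha-L_\beta$ in $(R_1)_{\bar\alpha\bar\beta}$, but this cannot be the whole story. The contribution $\Cont_{\Gamma^2_5}$ is, as you correctly note, purely diagonal---it involves only $(R_1)_{\bar\beta\bar\beta}$, $(R_2+R_2^*)_{\bar\beta\bar\beta}$, $\Delta_\beta$, and $(e^\beta,T_1)^\t$---yet it carries the coefficient $-3$ on $\mathcal E/(L^5-1)$, the largest in absolute value of the three. The actual source of the $(L^5-1)^{-1}$ is rather the factor $\Delta_\beta$ (equivalently the $(1+\q_\beta)^{-2}$ in $\Psi_{0\bar\beta}^{-2}$) combined with the higher-order pole structure of the diagonal $R$-entries in $\q_\beta$; after summing over $\beta$ the residual denominator is $\prod_\beta(1+\q_\beta)=1-5^5q$, and since $L^5-1=5^5q/(1-5^5q)$ this is where the $(L^5-1)^{-1}$ originates. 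Your claim that $\prod_\alpha L_\alpha$ is a constant multiple of $q\t^5$ is also not quite right: from Lemma~\ref{lem:minimalpolynomial} one has $\prod_\alpha L_\alpha = q\t^5/(1-5^5q)$, which already contains the relevant denominator. None of this affects the validity of the computation, but the mechanism you describe for the singular term is misidentified.
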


The geometric meaning of this inhomogeneous polynomial is unclear.
It has the following expansion
$$
\frac{\mathcal E}{L^5-1} = 45 q+227400 q^2+1195603370 q^3+5913833272300 q^4+O(q^5) .
$$
All  coefficients are integers.

\bibliographystyle{abbrv}
\bibliography{g2}

\end{document}